\documentclass{amsart}
\usepackage[utf8]{inputenc}
\usepackage[margin=1 in]{geometry}
\usepackage{colonequals}
\usepackage{amssymb,bm,amsfonts, stmaryrd, amsmath, amsthm, mathtools,comment}
\usepackage{enumitem}
\usepackage{multicol}
\usepackage[all]{xy}
\makeatletter
\@namedef{subjclassname@2020}{\textup{2020} Mathematics Subject Classification}
\makeatother
\usepackage[colorlinks]{hyperref}
\hypersetup{
 colorlinks=true,
 linkcolor=blue,
 filecolor=magenta, 
 urlcolor=cyan,
}
\usepackage{float}
\usepackage{tabularray}
\usepackage{mathrsfs}
\usepackage[capitalise]{cleveref}

\crefformat{equation}{(#2#1#3)}
\crefrangeformat{equation}{(#3#1#4--#5#2#6)}
\crefformat{enumi}{(#2#1#3)}
\crefrangeformat{enumi}{(#3#1#4--#5#2#6)}

\setcounter{MaxMatrixCols}{32}

\usepackage{wrapfig, tikz, tikz-cd}
\usepackage{caption}
\usepackage{subcaption}
\usetikzlibrary{arrows}

\newtheorem{theorem}{Theorem}[section]
\newtheorem{lemma}[theorem]{Lemma}
\newtheorem{proposition}[theorem]{Proposition}
\newtheorem{corollary}[theorem]{Corollary}
\newtheorem{conjecture}[theorem]{Conjecture}
\newcounter{intro}
\newtheorem{questionx}{Question}
\newtheorem{introthm}[intro]{Theorem}

\theoremstyle{definition}
\newtheorem{definition}[theorem]{Definition}
\newtheorem{example}[theorem]{Example}
\newtheorem{remark}[theorem]{Remark}

\newtheorem{construction}[theorem]{Construction}
\newtheorem{notation}[theorem]{Notation}

\newtheorem{chunk}[theorem]{}



\newtheorem{thm}{Theorem}[subsection]

\theoremstyle{definition}
\newtheorem{ex}[thm]{Example}
\newtheorem{rem}[thm]{Remark}

\newtheorem{ch}[thm]{}

\definecolor{bettergreen}{RGB}{50, 170, 50}

\definecolor{betterblue}{RGB}{100, 149, 236}

\DeclareMathOperator{\lcm}{lcm}

\newcommand{\ges}{\geqslant}
\newcommand{\les}{\leqslant}

\newcommand{\spec}{{\operatorname{Spec}^*}}

\newcommand{\Hom}{{\operatorname{Hom}}}

\newcommand{\Ext}{{\operatorname{Ext}}}
\newcommand{\Tor}{{\operatorname{Tor}}}

\newcommand{\del}{\partial}
\renewcommand{\H}{\operatorname{H}}
\renewcommand{\S}{\mathcal{S}}

\DeclareMathOperator{\codepth}{codepth}
\DeclareMathOperator{\coker}{coker}
\DeclareMathOperator{\depth}{depth}
\DeclareMathOperator{\ad}{ad}
\newcommand{\supp}{{\operatorname{Supp}}}
\newcommand{\xra}{\xrightarrow}

\newcommand{\f}{\bm{f}}
\newcommand{\V}{{\rm{V}}}
\newcommand{\sym}{{\rm{Sym}}}

\newcommand{\cV}{{\mathcal{V}}}
\newcommand{\cC}{{\mathcal{C}}}
\renewcommand{\ll}{\ell\ell}

\newcommand{\ann}{{\operatorname{ann}}}
\newcommand{\pdim}{{\mathrm{projdim}}}
\newcommand{\shift}{{\mathsf{\Sigma}}}

\DeclareMathOperator{\height}{height}
\DeclareMathOperator{\cid}{cid}
\DeclareMathOperator{\rank}{rank}
\DeclareMathOperator{\rad}{rad}

\newcommand{\m}{\mathfrak{m}}
\newcommand{\p}{\mathfrak{p}}
\newcommand{\A}{\mathbb{A}}

\DeclareMathOperator{\sgn}{sign}


\newcommand*\circled[1]{\tikz[baseline=(char.base)]{ \node[shape=circle,draw,inner sep=2pt] (char) {#1};}}
\newcommand*\squared[1]{\tikz[baseline=(char.base)]{ \node[shape=rectangle,draw,inner sep=2pt] (char) {#1};}}

\DeclareMathOperator{\im}{im}
\title[The deformation problem for monomial ideals]{The embedded deformation problem for monomial ideals}
\author[B.~Briggs]{Benjamin Briggs}
\address{Department of Mathematics, Imperial College London, South Kensington Campus, London, SW7
2AZ, UK}
\email{b.briggs@imperial.ac.uk}
\author[E.~Grifo]{Elo\'{i}sa Grifo}
\address{Department of Mathematics,
University of Nebraska, Lincoln, NE 68588, U.S.A.}
\email{grifo@unl.edu}
\author[J.~Pollitz]{Josh Pollitz}
\address{Mathematics Department, 
Syracuse University, 
Syracuse, NY 13244 U.S.A.}
\email{jhpollit@syr.edu}

\keywords{cohomological support variety, homotopy Lie algebras, embedded deformation, monomial ideal, dg algebra, Koszul complex, Taylor resolution}
\subjclass[2020]{Primary: 13D09. Secondary: 13C15, 13D02, 13D07, 13H10, 14M10, 16E45.}

\begin{document}

\begin{abstract}
This article is concerned with homological properties of local or graded rings whose defining relations are monomials on some regular sequence. 
The main result of the article positively answers a question of Avramov for such a ring $R$. More precisely, we establish that an embedded deformation of $R$ corresponds exactly to a degree two central element in the homotopy Lie algebra of $R$, as well as a free summand of the conormal module of $R$. A major input in the proof is an analysis of cohomological support varieties. Other main results include establishing a lower bound for the dimension of the cohomological support variety of any complex over such rings, and classifying all possible subvarieties of affine $n$-space that are the cohomological support of rings defined by $n$ monomial relations where $n$ is five or less.
\end{abstract}

\maketitle

\vspace{-.2in}
 
\section*{Introduction}\label{s_intro}

In the late 1980s, Avramov~\cite{Avramov:1989a} asked whether an embedded deformation of a local ring corresponds to a degree $2$ central element in the homotopy Lie algebra. To make this precise, let $R$ be a commutative noetherian local ring with residue field $k$. Recall that $R$ admits an embedded deformation if $\widehat{R}=S/(f)$ for some local ring $S$ with $f$ a regular element in the square of the maximal ideal of $S$. More generally, $R$ admits an embedded deformation of codimension $c$ if $\widehat{R}=S/(\f)$ for some local ring $S$ with $\f=f_1,\ldots,f_c$ a regular sequence in the square of the maximal ideal of $S$. 

The \emph{homotopy Lie algebra} of $R$, denoted $\pi(R)$, is a positively graded Lie algebra over $k$ that has seen a number of striking applications~\cite{Avramov1996,Avramov:1999,Avramov/Halperin:1987,Briggs:2022}. Its defining property makes its importance evident: the graded algebra $\Ext_R(k,k)$ is the universal envelope of $\pi(R)$. Moreover, the maximal codimension of an embedded deformation of $R$ is bounded above by the $k$-vector space rank of $\pi^2(R)$. Avramov's question~is: \looseness -1
\begin{questionx}\label{Lucho_question} 
Are conditions \ref{emb_def} and \ref{central_elt} below equivalent?
    \begin{enumerate}[label=(\roman*)]
        \item\label{emb_def} The ring $R$ admits an embedded deformation of codimension $c$.
        \item \label{central_elt} The rank of the subspace of $\pi^2(R)$ consisting of central elements of $\pi(R)$ is $c$.
    \end{enumerate}
\end{questionx}

Recall that an element $z$ in a Lie algebra $L$ is central if $\ad(z)=[z,-]$ is identically zero on $L$. One can directly prove that \ref{emb_def} always implies \ref{central_elt}. Furthermore, part of the motivation for the question is that the extremal case is known to hold by a theorem of Avramov--Halperin~\cite[Theorem~C]{Avramov/Halperin:1987}: \emph{$R$ admits a maximal embedded deformation  (i.e.\ $R$  is complete intersection) if and only if each element of $\pi^2(R)$ is central in $\pi(R)$.} As further evidence, the answer to \cref{Lucho_question} is ``yes" in the following cases: 
\begin{itemize}
    \item $\mathrm{embdim}(R)-\depth(R)\les 3$;
    \item $R$ is one link from a complete intersection ring;
    \item   $R$ is a Gorenstein ring that is codimension 4 or two links from a complete intersection;
    \item $R$ is a
    Koszul algebra. 
\end{itemize}
The first three cases were established in \cite{Avramov:1989a} and the Koszul case was shown in \cite{Lofwall:1994}. In the more than 30 years since these results were established, there has been little progress on Avramov's question. Our main result says that the answer to \cref{Lucho_question} is ``yes" when $R$ is defined by monomials.

\begin{introthm}
\label{main_thm_intro}
    Let $Q/I$ be a minimal regular presentation for $R$, and assume that $I$ is minimally generated by monomials in some regular sequence of $Q$. The following are equivalent:
\begin{enumerate}
    \item \label{intro_thm_1}The ring $R$ has an embedded deformation of codimension $c$.
    \item  \label{intro_thm_2}The conormal module $I/I^2$ of $R$ has a free summand of rank $c$. 
    \item  \label{intro_thm_3}There is a $k$-subspace of rank $c$ in $\pi^2(R)$ that consists of central elements of $\pi(R)$.
    \item  \label{intro_thm_4}There is a $k$-subspace of rank $c$ in $\pi^2(R)$ that consists of radical elements of $\pi(R)$.
\end{enumerate}
\end{introthm}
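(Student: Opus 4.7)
The plan is to close the cycle
\[
(1) \implies (3) \implies (4) \implies (2) \implies (1).
\]
Of these, $(1) \implies (3)$ is the classical Avramov--Halperin construction producing, from an embedded deformation $\widehat{R}=S/(\f)$, a rank $c$ subspace of central elements in $\pi^2(R)$; the implication $(3) \implies (4)$ is immediate since the center of a graded Lie algebra is an abelian ideal, hence contained in the solvable radical. A parallel direct argument gives the ``bonus'' implication $(1) \implies (2)$: the module $(\f)/(\mathfrak{m}_S\cdot(\f))$ is a free rank $c$ summand of $I/I^2$, provided the regular presentations of $S$ and $R$ are chosen compatibly.

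For $(2) \implies (1)$, the approach is combinatorial. Writing $I=(m_1,\dots,m_n)$ with each $m_i$ a monomial in a fixed regular sequence $\bm{x}$ of $Q$, a rank $c$ free summand of $I/I^2$ should translate, after a suitable change of basis in the conormal module, into $c$ generators, say $m_{i_1},\dots,m_{i_c}$, whose supports in $\bm{x}$ are disjoint from the supports of the remaining generators. Such monomials automatically lie in $\mathfrak{m}_Q^2$ and form a regular sequence, whose quotient realizes the desired embedded deformation.

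The crux is $(4) \implies (2)$, and it is here that the analysis of cohomological support varieties advertised in the abstract enters as the major input. The program is: first, produce an explicit combinatorial description of $V_R^*(k) \subseteq \mathbb{A}^n_k$ as a union of linear coordinate subspaces encoded by the overlap pattern of the $m_i$'s; second, show that any rank $c$ subspace of radical elements of $\pi^2(R)$ cuts out a $c$-dimensional linear subvariety of $V_R^*(k)$. Together these force $c$ of the monomial generators to be combinatorially independent of the others, which is exactly the splitting giving a rank $c$ free summand of $I/I^2$. The main obstacle is precisely this last step: bridging an abstract Lie-algebraic condition (radicality in $\pi^{\ges 2}(R)$) and an algebraic splitting of the conormal module through cohomological support requires both the detailed coordinate-subspace geometry of $V_R^*(k)$ for monomial rings and the subtle assertion that a radical subspace of $\pi^2(R)$ cuts out a genuinely linear subvariety of the correct dimension.
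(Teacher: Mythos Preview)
Your cycle has two genuine gaps, and the paper's route is different in an essential way.

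First, the step $(4)\Rightarrow(2)$: your program hinges on describing $\V_R(R)$ explicitly as a union of linear coordinate subspaces encoded by the overlap pattern of the monomials. This is false in general. For the edge ideal of the $6$-cycle, $R=k[x_1,\ldots,x_6]/(x_1x_2,x_2x_3,\ldots,x_6x_1)$, one has
\[
\V_R(R)=\mathcal{V}(\chi_1\chi_3\chi_5+\chi_2\chi_4\chi_6),
\]
an irreducible cubic hypersurface, not a union of coordinate subspaces. So no such combinatorial description of $\V_R(R)$ is available, and the plan collapses at exactly the point you flagged as the crux.

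Second, your $(2)\Rightarrow(1)$ is asserted but not argued: you say a rank $c$ free summand of $I/I^2$ ``should translate, after a suitable change of basis,'' into $c$ monomial generators with support disjoint from the others. This is not obvious; a free summand is an $R$-module statement, and there is no evident mechanism forcing the splitting to be realized by a subset of the given monomial generators. The paper does not prove $(2)\Rightarrow(1)$ directly either; it goes $(2)\Rightarrow(3)$ via Iyengar's theorem on the conormal module.

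The paper's actual cycle is $(1)\Rightarrow(2)\Rightarrow(3)\Rightarrow(4)\Rightarrow(1)$, with the last implication factoring through a support-variety condition. The key difference from your plan is that the paper never computes $\V_R(R)$. Instead it proves a one-sided inclusion: if $\gcd(f_i,f_j)$ is a nonunit for some $j\neq i$, then the coordinate point $\mathbf{e}_i$ lies in $\V_R(R)$ (this is a direct computation in the $2$-periodic complex built from the Taylor resolution). Now a rank $c$ radical subspace in $\pi^2(R)$ forces $\V_R(R)$ into a codimension $c$ linear subspace (this is the input from \cite{BEJ2}), so $c$ of the coordinate points $\mathbf{e}_i$ must lie outside $\V_R(R)$; by the contrapositive of the inclusion just mentioned, those $f_i$ have unit gcd with every other generator, and hence form a regular sequence on the quotient by the rest. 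This gives the embedded deformation directly, bypassing both the full computation of $\V_R(R)$ and the conormal module.
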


Recall that an element $z$ in a Lie algebra $L$ is radical if $\ad(z)^n$ is identically zero on $L$ for some $n>0$. \cref{main_thm_intro} can be found in \cref{monomial_equivalence}, where more is shown: if any of the equivalent conditions above hold, then equivalently $I$ is generated by monomials $\f$ in a regular sequence $\bm{x}$ of $Q$ where, up to reordering, 
\[
\supp(f_i)\cap\supp(f_j)=\varnothing \quad\text{  for each }1\les i\les c\text{ and all }j\neq i\,,
\]
where the support of a monomial records those $x_i$ that divide it.

The implications $\cref{intro_thm_1}\Rightarrow\cref{intro_thm_2}\Rightarrow\cref{intro_thm_3}\Rightarrow\cref{intro_thm_4}$, from \cref{main_thm_intro}, always hold; see \cref{th_radical_support}. The fact that the implication $\cref{intro_thm_4}\Rightarrow \cref{intro_thm_1}$ holds in the setting of \cref{main_thm_intro} is a bit unexpected for a number of reasons. For one, it forces central elements to coincide with radical elements in $\pi^2(R)$. Furthermore, even in the previous situations that \cref{Lucho_question} was known to have a positive answer, the only case where $\cref{intro_thm_4}\Rightarrow \cref{intro_thm_1}$ was known to hold is when $\pi(R)$ can be computed in its entirety (for example, when $\mathrm{embdim}(R)-\dim(R)\les 3$, or trivial situations like when $R$ is complete intersection or Golod). Part of the novelty of \cref{main_thm_intro} is that it proves this stronger implication without having to compute $\pi(R)$, which is typically an infinite calculation. 

The proof of \cref{main_thm_intro} exploits a relationship,  established by the present authors in \cite{BEJ2}, between radical elements in $\pi^2(R)$ and hyperplane sections containing the cohomological support variety of $R$. Recall that the cohomological support variety of an $R$-complex $M$ with finitely generated homology is a conical variety in affine $n$-space over $k$, where $n=\rank\pi^2(R)$. These are interesting geometric objects that detect homological properties of the complex or ring; cf.\@ \cite{Avramov/Buchweitz:2000b,Briggs/Grifo/Pollitz:2022,BEJ2,Pollitz:2019,Stevenson:2014a}. The connection of $\V_R(R)$ to the structure of $\pi(R)$ was established in \cite[Theorem~3.1]{BEJ2}: \emph{if there is a $c$-dimensional subspace of $\pi^2(R)$ that consists of radical elements of $\pi(R)$ then $\V_R(R)$ is contained in a codimension $c$ hyperplane of $\mathbb{A}_k^n$}. This result is paired with \cref{main thm monomial}, which gives a criterion for $\V_R(R)$ to be contained in a hyperplane of a given codimension in the monomial setting. 

 The proof of \cref{main thm monomial} goes by an analysis of the Taylor resolution of $R$ over the regular ring. Such analysis also led to the next two main results. First, recall that the \emph{realizability question} of cohomological supports is the following: \emph{for an arbitrary local ring $R$ with $n=\rank\pi^2(R)$,
is every conical subvariety of $\mathbb{A}_k^n$ the cohomological support variety of some bounded complex of finitely generated $R$-modules?} The answer is ``yes" when $R$ is complete intersection; see \cite{Bergh}, as well as \cite{Avramov/Iyengar:2007}.  Because of certain non-realizability results in \cite[Theorem B]{Pollitz:2021} or \cite[Theorem~3.1]{BEJ2}, the answer is expected to be ``no" whenever $R$ is not complete intersection; see \cref{conjecture_realizability}. In this article, we show the following---this is \cref{cid_monomial}---in the monomial setting:

\begin{introthm}
     Let $Q/I$ be a minimal regular presentation for $R$, and assume that $I$ is minimally generated by $n$ monomials in some regular sequence of $Q$. For each $R$-complex $M$ with $\H(M)$ nonzero and finitely generated, there is an inequality 
     \vspace{-0.5em}
     \[\dim\V_R(M)\geqslant n - \operatorname{height}(I) \,. \]
     In particular, $\dim \V_R(M)>0$ when $R$ is not complete intersection. 
\end{introthm}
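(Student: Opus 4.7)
My plan is twofold: reduce the theorem to a statement about $\V_R(R)$, then compute $\dim \V_R(R)$ explicitly using the monomial structure. For the reduction, I would invoke the containment $\V_R(R) \subseteq \V_R(M)$ for any $R$-complex $M$ with nonzero finitely generated homology, a standard feature of cohomological supports defined via the homotopy Lie algebra action, cf.~\cite{BEJ2,Pollitz:2019}. Thus it suffices to show $\dim \V_R(R) \geq n - h$, where $h = \height(I)$.

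Let $c^*$ denote the maximum rank of a free summand of $I/I^2$. By \cref{main thm monomial}, $\V_R(R)$ is contained in a linear subspace of codimension $c^*$, and in no smaller one. The key step is to upgrade this to an explicit identification $\V_R(R) = V(y_1, \dots, y_{c^*})$, where $y_1, \dots, y_{c^*}$ are the coordinates on $\mathbb{A}^n_k = \operatorname{Spec} \operatorname{Sym}(\pi^2(R))$ dual to the free-summand generators. This would be proved by sharpening the Taylor-resolution analysis underlying \cref{main thm monomial}: for each non-free-summand generator $f_j$, the fact that $\supp(f_j)$ meets $\supp(f_i)$ for some other $i$ produces a nontrivial Taylor syzygy that obstructs the vanishing of the coordinate $y_j$ on $\V_R(R)$, so in particular $y_j \notin I(\V_R(R))$. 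Pushing this through to polynomial (not merely linear) combinations gives the reverse containment $V(y_1, \dots, y_{c^*}) \subseteq \V_R(R)$, hence the desired equality.

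To finish, I would bound $c^*$ by $h$. By \cref{main_thm_intro}, a rank-$c^*$ free summand of $I/I^2$ in the monomial setting corresponds (after reordering) to generators $f_{i_1}, \dots, f_{i_{c^*}}$ whose supports are pairwise disjoint from one another and from all other $\supp(f_j)$. Monomials with disjoint supports in a regular sequence of $Q$ form a regular subsequence of $Q$ contained in $I$, so $c^* \leq \operatorname{grade}(I) = \height(I) = h$. Combining the ingredients yields
\[
\dim \V_R(M) \;\geq\; \dim \V_R(R) \;=\; n - c^* \;\geq\; n - h,
\]
completing the argument. The main obstacle in this plan is the refinement of \cref{main thm monomial} to an exact dimension computation for $\V_R(R)$ rather than a mere containment in a linear subspace; this requires a more delicate Taylor-complex/lcm-lattice analysis than what was needed for \cref{main thm monomial} itself.
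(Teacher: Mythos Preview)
Your reduction step is incorrect: the containment $\V_R(R) \subseteq \V_R(M)$ does \emph{not} hold for these support varieties. \Cref{lucho_support_example} in the paper gives an explicit counterexample: for the monomial ring $R=k\llbracket x,y,z,w\rrbracket/(x^2,xy,yz,zw,w^2)$ one has $\V_R(R)=\mathcal{V}(\chi_1\chi_5)$, which is $4$-dimensional, while the cyclic module $M=R/(y,z)$ has $\dim\V_R(M)=3$. So the very first line of your argument collapses, and there is no way to deduce the bound for arbitrary $M$ from a computation of $\V_R(R)$ alone.

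Your second step is also wrong as stated. You claim $\V_R(R)$ is exactly the coordinate subspace $V(y_1,\ldots,y_{c^*})$, but \cref{introthm_mon_support} already shows that for $n=5$ there are monomial rings with $\V_R(R)=\mathcal{V}(\chi_1\chi_5)$, a union of two hyperplanes rather than a linear subspace; \cref{six-cycle} gives an $n=6$ example where $\V_R(R)$ is a cubic hypersurface. So no ``sharpening'' of \cref{main thm monomial} can produce the equality you assert.

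The paper's proof takes a completely different route that works uniformly for all $M$ at once. One reduces the Taylor resolution modulo $\m$ to get a dg module $A=T\otimes_Qk$ over $\Lambda=E\otimes_Qk$, and shows its Loewy length satisfies $\ll_\Lambda(A)\leqslant\height(I)$: any product $b_{j_1}\cdots b_{j_i}$ with $i>\height(I)$ involves a pair with nontrivial $\gcd$, hence lies in $\m T$ and vanishes in $A$. This is then fed into the general inequality $\dim\V_R(M)\geqslant n-\ll_\Lambda(A)$ from \cite[Theorem~2.7]{BEJ2}, which holds for every complex $M$ with nonzero finitely generated homology.
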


The first inequality was established for Cohen-Macaulay rings in \cite[Corollary~2.8]{BEJ2}. The lower bound in the theorem is called the complete intersection defect of $R$ and is discussed more in \cref{cid_defect}; the relevant point to the theorem above is that it is zero exactly when $R$ is complete intersection.

Finally, we classify all varieties that can be realized as $\V_R(R)$ where $R$ is defined by up to five monomials.

\begin{introthm}\label{introthm_mon_support}
   Let $R$ have regular presentation $Q/I$ where $I$ is generated by monomials $\f=f_1,\ldots,f_n$ on a regular sequence $\bm{x}$ of $Q$. 
Then there are only finitely many closed subsets of $\mathbb{A}_k^n$ that are realizable as $\V_R(R)$, depending only on $n$, and independent of $\bm{x}$ or $Q$. 
If $n\leqslant 5$, then the cohomological support variety of $R$ is a union of coordinate subspaces. In particular, there are two cases: 
\begin{itemize} 
\item (the exceptional case): if $n=5$ and, up to reordering, $\f$ has GCD graph
\begin{center}
\begin{minipage}{0.3\textwidth}
\centering
\vfill
\begin{tikzpicture}[scale=.8, every node/.style={}]
  \node (v1) at (0,0) {1};
  \node (v2) at (1.5,0) {2};
  \node (v3) at (3,0) {3};
  \node (v4) at (4.5,0) {4};
  \node (v5) at (6,0) {5};

  \draw (v1) -- (v2);
  \draw (v2) -- (v3);
  \draw (v3) -- (v4);
  \draw (v4) -- (v5);
\end{tikzpicture}
\vfill
\end{minipage}
\hspace{.4in}
\begin{minipage}{0.3\textwidth}
\centering
\begin{tikzpicture}[scale=.65, every node/.style={}]
  \node (v1) at (0,-.25) {1};
  \node (v2) at (1.5,-.25) {2};
  \node (v3) at (2.25,.8) {3};
  \node (v4) at (3,-.25) {4};
  \node (v5) at (4.5,-.25) {5};

  \draw (v1) -- (v2);
  \draw (v2) -- (v3);
  \draw (v2) -- (v4);
  \draw (v3) -- (v4);
  \draw (v4) -- (v5);
\end{tikzpicture}
\end{minipage}
\end{center}
and $f_3$ divides $\lcm(f_2,f_4)$, then 
\[
\V_R(R)=\{a\in \mathbb{A}_k^5\mid a_1=0\text{ or } a_5=0\}\,.
\]
\item (the generic case): in all other cases, 
\[
\V_R(R) = \left\{ a\in \mathbb{A}_k^n \mid  a_i=0\text{ for all }i\in S\right\}
\]
where $S\subseteq\{ 1,\ldots, n\}$ and $c= |S|$ is the largest codimension of an embedded deformation of $R$.
\end{itemize}
\end{introthm}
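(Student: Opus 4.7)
The plan breaks into three pieces: the finiteness assertion, the generic case, and the $n=5$ exceptional case. All three rest on \cref{main thm monomial}, which translates the question ``is $\V_R(R)$ contained in a given coordinate hyperplane?'' into a combinatorial condition on $\f$ that can be read off from the Taylor complex.

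For the finiteness statement, I would observe that $\V_R(R)$ depends only on the LCM lattice of $\f$ (equivalently, the GCD graph together with its divisibility pattern), since the Taylor resolution underlying \cref{main thm monomial} depends only on this data. There are finitely many such combinatorial types on $n$ monomials, so there are only finitely many possible subvarieties $\V_R(R)\subseteq\mathbb{A}_k^n$, and they are independent of $Q$ and $\bm{x}$.

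For the generic case, set $S=\{i:\supp(f_i)\cap\supp(f_j)=\varnothing\text{ for all }j\neq i\}$. By the strengthening of \cref{main_thm_intro} recorded immediately after it, $c=|S|$ is the maximal codimension of an embedded deformation, and condition (4) of \cref{main_thm_intro} then provides a rank $c$ space of radical elements in $\pi^2(R)$. Combined with \cite{BEJ2}, this yields the inclusion $\V_R(R)\subseteq\{a\in\mathbb{A}_k^n:a_i=0\text{ for all }i\in S\}$. For the reverse inclusion I would apply \cref{main thm monomial} to each index $j\notin S$: since $f_j$ shares support with some other $f_k$, the criterion fails to force $\V_R(R)$ into the hyperplane $\{a_j=0\}$, so the corresponding coordinate line lies in $\V_R(R)$. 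Conicity, together with the first inclusion, then pins $\V_R(R)$ down as exactly the claimed coordinate subspace.

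The exceptional case is where the real work lies. For each GCD graph on at most five vertices I would tabulate, via \cref{main thm monomial}, the coordinate hyperplanes forced to contain $\V_R(R)$; in every graph other than the two $n=5$ configurations in the statement, this yields a single coordinate subspace and the previous paragraph applies verbatim. For those two configurations the additional hypothesis $f_3\mid\lcm(f_2,f_4)$ is precisely what makes vertices $1$ and $5$ simultaneously ``disconnecting'' while no single coordinate hyperplane absorbs the whole variety, and an explicit Taylor-complex computation should confirm $\V_R(R)=\{a\in\mathbb{A}_k^5:a_1=0\text{ or }a_5=0\}$. The main obstacle I anticipate is the bookkeeping needed to rule out every other $n\les 5$ graph and every weaker divisibility hypothesis from producing a reducible or non-coordinate variety: the previous steps are structural, but this last one is genuinely exhaustive, and in particular must pinpoint $f_3\mid\lcm(f_2,f_4)$ as the unique threshold for reducibility.
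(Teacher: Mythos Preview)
Your finiteness argument matches the paper's (\cref{support_monomial}), and you correctly identify that the exceptional case requires an explicit Taylor-complex computation. But the argument you give for the \emph{generic} case has a genuine gap.

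The step ``conicity, together with the first inclusion, then pins $\V_R(R)$ down as exactly the claimed coordinate subspace'' is false. Knowing that $\V_R(R)$ is conical, is contained in the coordinate subspace $L=\{a_i=0:i\in S\}$, and contains each standard basis vector $\mathbf{e}_j$ for $j\notin S$ does \emph{not} force $\V_R(R)=L$. The exceptional case itself is a counterexample to your heuristic: there $S=\varnothing$, every $\mathbf{e}_j$ lies in $\V_R(R)$ (by \cref{key lemma}(1)), $\V_R(R)$ is conical and contained in $\mathbb{A}_k^5$, and yet $\V_R(R)=\cV(\chi_1\chi_5)\subsetneq\mathbb{A}_k^5$. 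So your structural argument, if valid, would apply equally well to the exceptional graphs and yield the wrong answer; nothing in it distinguishes the generic from the exceptional situation.

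What the paper actually does is reduce (via \cref{monomial tor independent}) to the case where $\Gamma_{\f}$ is connected, so that $S=\varnothing$ and the target is full support $\V_R(R)=\mathbb{A}_k^n$, and then proves this \emph{graph by graph}. The tools are \cref{key lemma}(3) when some vertex is adjacent to all others, \cref{lemma isolated in degree 2} and \cref{isolated vertex implies full support} (producing an isolated vertex in the Taylor graph) for most of the remaining $n\les 5$ graphs, and direct cycle/boundary analysis in $\widehat{\cC}_{E_a}(T)$ for the $5$-cycle and one further $n=5$ graph. In other words, the exhaustive computation you anticipated for the exceptional configurations is in fact required across \emph{all} connected graphs: the bulk of the work in \cref{th_nleq4} and \cref{classification of n=5} is establishing full support for the eight non-exceptional connected $n=5$ graphs, not the two exceptional ones. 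Your plan would need to replace the conicity claim with this case analysis.
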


The first part of the theorem is \cref{support_monomial}, while the classification results are in \cref{th_nleq4,classification of n=5}.  The main approach used to establish \cref{introthm_mon_support} is studying two combinatorial objects associated to the defining relations of $R$, the GCD graph and Taylor graph; see \cref{gcd_graph,definition_Taylor_graph} for precise definitions.

In the introduction, we have focused on the local setting, where Avramov's question was originally asked; however, we simultaneously work in the graded context in this paper. That is to say, in the paper, by a minimal regular presentation $Q/I$ of $R$ we mean that one of the following holds:
\begin{itemize}
    \item (\textbf{Local case}): $\widehat{R}=Q/I$ with $(Q,\m)$ a regular local ring and $I\subseteq \m^2$, or
    \item (\textbf{Graded case}): $R=Q/I$ where $Q$ is a positively graded polynomial algebra over a field and $I$ an ideal generated by homogeneous forms of degree at least 2. 
\end{itemize}
The theorems above all hold in this generality. It is worth remarking that Avramov's question has a negative answer in the nonstandard graded case due to an example of Dupont~\cite{Dupont}, provided one interprets Avramov's question in this setting as the ring needs to admit a \emph{homogeneous} embedded deformation; see \cref{example_Dupont} for more on this. In fact, Dupont's example is analyzed again in \cref{example_Dupont}, where we show if we give the ring the standard grading then the ring admits a homogeneous embedded deformation, as does the corresponding local ring. 

Despite the evidence in the references above and the results below, while preparing this article, in collaboration with Mark E.~Walker, we have discovered a counterexample to \Cref{Lucho_question} in the local setting that will appear in future work. 

\section{Background}
Throughout, $Q$ is a commutative base ring that is either local or a nonnegatively finitely generated graded algebra over a field $k=Q_0$. Let $\m$ denote the unique maximal ideal of $Q$, in the local case, or the irrelevant maximal ideal of $Q$ in the graded setting. In either case, $k=Q/\m$ is the residue field of $Q$. 

\subsection{Semifree dg algebras and the homotopy Lie algebra}\label{s_dg}

We start by recalling the necessary background on semifree differential graded (=dg) algebras and the homotopy Lie algebra. The reader is encouraged to refer to \cite{Avramov:2010}, and the references therein, for a more detailed account of this material. 

\begin{ch}
    Given a dg $Q$-algebra $A$, we write $A^\natural$ for its underlying graded $Q$-algebra. A \textbf{semifree (algebra) extension} of $A$ is a dg $A$-algebra $B$ that is isomorphic as a graded $A^\natural$-algebra to the free strictly graded commutative $A^\natural$-algebra on some set of graded variables $X=X_1\cup X_2\cup \cdots $, where $X_i$ consists of variables of homological degree $i$. That is to say, there is an isomorphism of graded $A^\natural$-algebras
    \[
    B^\natural\cong \Lambda_{A^\natural}(X_{\text{odd}})\otimes_{A^\natural} \sym_{A^\natural}(X_{\text{even}})\,.
    \]
    In this case, we write $B=A[X]$. When $X$ consists of finitely many variables $x_1,\ldots, x_n$ with $\partial(x_i) = a_i$,
    \[B=A[x_1,\ldots,x_n\mid \del x_i=a_i]\,.\] 
\end{ch}

\begin{ex}
\label{e_koszul}
    Given $\f=f_1,\ldots,f_n$ in $Q$, then $E=Q[e_1,\ldots,e_n\mid \del e_i=f_i]$ is the Koszul complex on $\f$.
\end{ex}

\begin{ch}
\label{c_semifree_alg} 
Fix a quotient ring map $Q\to R$.
A \textbf{semifree resolution} of $R$ over $Q$ is a quasi-isomorphism of dg $Q$-algebras $Q[X]\to R$ where $Q[X]$ is semifree over $Q$. In the graded case, the variables in $X$ also have an internal grading. 
One can inductively construct such a resolution using a method of Tate that consists of successively adjoining variables, minding parity to determine whether an exterior or symmetric variable should be added, to kill homology classes; see \cite{Tate:1957} or \cite{Avramov:2010} for details. 

If one constructs such a resolution $Q[X]$ where the minimal number of variables is used in each homological degree, the resulting dg-algebra $Q[X]$ is unique up to isomorphism~\cite[Proposition 7.2.4]{Avramov:2010} and has a {\bf decomposable} differential~\cite[Lemma 7.2.2]{Avramov:2010}, meaning $\partial(X)\subseteq (\m, X^2)$. In this case, we call $Q[X]$ the \textbf{minimal model of $R$ over $Q$}. This will be pertinent to the construction of the homotopy Lie algebra in \cref{HLA}.\looseness -1
\end{ch}

\begin{ch}
    \label{c_deviations}
Assume $R=Q/I$, or in the local case that $\widehat{R}\cong Q/I$, with $Q$ regular and $I\subseteq \m^2$. In the graded case, we assume that $I$ is a homogeneous ideal. In this setting, we refer to a minimal model $Q[X]$ of $R$ over $Q$ as \emph{the} minimal model of $R$. This is independent of $Q$ whenever $R$ is an equicharacteristic ring, and regardless the semifree dg $k$-algebra
\vspace{-0.5em}
\[k[X]=k\otimes_Q Q[X]\]
is unique up to isomorphism; see \cite[Proposition~3.1]{Avramov:1999}.
In particular, the number of variables adjoined in each degree is always independent of $Q$.

The {\bf deviations} of $R$ are defined as follows: we set 
$$\varepsilon_i(R) = |X_{i-1}| \quad \text{ for } i \geqslant 2$$ 
and by convention $\varepsilon_1(R)= \dim(Q)$ is the embedding dimension of $R$. In light of \cref{e_koszul}, the first step $Q[X_1]$ towards constructing the minimal model $Q[X]$ is the Koszul complex on a minimal generating set for $I$, and so $\varepsilon_2(R)$ is the minimal number of generators of $I$.
\end{ch}

\begin{ch}\label{HLA}
    Fix the minimal model $Q[X]$ of $R$ as defined above.   Following \cite{Avramov/Halperin:1987}, the \textbf{homotopy Lie algebra of} $R$ is the graded $k$-vector space $\pi(R)=\{\pi^i(R)\}_{i\geqslant 1}$ given by
\[
\pi^i(R)\coloneqq \begin{cases}
(\m/\m^2)^\vee & i=1\\
(kX_{i-1})^\vee   &  i\geqslant 2
\end{cases}
\]
where $(-)^\vee$ denotes taking the $k$-linear dual. 
Elements of $\pi^i(R)$ can be regarded as graded $k$-linear functionals $kX\to k$.  Note that the rank of the $k$-vector space $\pi^i(R)$ is $\varepsilon_i(R)$, the $i^{\text{th}}$-deviation  of $R$. 

Furthermore, using that the differential $\del$ of $k[X]$ is decomposable, a Lie bracket can be defined on $\pi^{\geqslant 2}(R)$ in terms of the quadratic part of $\del$. Namely, given $\alpha \in \pi^i(R)$ and $\beta\in \pi^j(R)$, define $[\alpha,\beta]\in \pi^{i+j}(R)$ as the following degree $1-i-j$ functional on $kX$: if $x\in X_{i+j-1}$, we write 
\[\partial(x)=\sum_{pq} a_{pq} y_pz_q \quad\text{in } k[X]/(X)^3\,,\] 
with $a_{pq}\in k$ and $y_p,z_q\in X$, and then set
\[
[\alpha,\beta]( x) \coloneqq \sum_{pq} a_{pq}\big( (-1)^{i+1+ij} \alpha( y_p) \beta( z_q) +(-1)^{j} \beta(y_p) \alpha( z_q)\big)\,.
\]
 By \cite[page 175]{Avramov/Halperin:1987} the bracket defined above makes $\pi^{\ges 2}(R)$ into a graded Lie algebra over $k$; see \cite[Remark~10.1.2]{Avramov:2010} for the technicalities present in characteristic 2 or 3. 
\end{ch}

\begin{rem}
    The description above makes $\pi^{\geqslant 2}(R)$ into a graded Lie algebra over $k$; this is all we need in the present paper. However, one can extend the bracket to make $\pi(R)$ a graded Lie algebra where the full decomposability of the differential of $Q[X]$ is used to formulate $\pi^1(R)\times \pi^i(R)\to \pi^{i+1}(R)$; confer \cite{Briggs:2018}. We will also discuss another description of $\pi(R)$ in \cref{r_pi_as_derivations}.
\end{rem}

\begin{ch}
    \label{c_identification}
The homotopy Lie algebra of $R$ in degree 2 admits a simple description. Fix a minimal regular presentation $R=Q/I$, and recall that the Koszul complex $Q[X_1]$ is  the first step in the construction of the minimal model $Q[X]$; see \cref{c_deviations}. It follows that there is a canonical isomorphism
\[
    \pi^2(R) = (kX_1)^\vee  \xrightarrow{\ \cong\ } (I/\m I)^\vee \quad\text{given by } \ x^\vee\mapsto \del(x)^\vee\,.
\]
\end{ch}

\begin{ch}
For any element $\alpha \in \pi^i(R)$ we consider the adjoint action
\[
\ad(\alpha)\coloneqq[\alpha, -] \colon \pi^*(R) \to \pi^{*+i}(R)\,.
\]
We say that $\alpha$ is {\bf central} if $\ad(\alpha)=0$, and {\bf radical} if $\ad(\alpha)^n=0$ for some $n$. The \textbf{center} $\zeta(R)$ and \textbf{radical} $\rho(R)$ of $\pi(R)$ are 
\begin{align*}
\zeta(R) &\coloneqq \{ \alpha\in \pi(R) \mid \alpha\text{ is a central element of }\pi(R)\}\\
\rho(R) &\coloneqq \{ \alpha\in \pi(R) \mid \alpha\text{ is a radical element of }\pi(R)\}\, .
\end{align*}
We always have $\zeta(R)\subseteq \rho(R)$ and by  \cite{Felix/Halperin/Jacobsson/Lofwall/Thomas:1988}  $\rho(R)$ is a graded Lie ideal in $\pi(R)$; in fact, it is the maximal solvable ideal. We will be especially interested in radical elements living in degree $2$, which we will denote by $\rho^2(R)$. More generally, $\zeta^i(R)$ and $\rho^i(R)$ will denote the central and radical elements of degree $i$, respectively.
\end{ch}

\begin{ex}\label{ex_ci}
    Assume $R$ is complete intersection, meaning that $I$ is generated by a $Q$-regular sequence. In this case, 
    \[
   \zeta^2(R)=\rho^2(R)=\pi^2(R)=\pi^{\geqslant 2}(R)\,.
    \]
    In fact, by \cite[Theorem~C]{Avramov/Halperin:1987}, see also \cite[Corollary 3.4]{BEJ2}, the only time $\rho^2(R)=\pi^2(R)$ is when $R$ is complete intersection. 
\end{ex}

\begin{ex}[Golod rings]
Let $K^R$ denote the Koszul complex on a minimal generating set for the maximal ideal of $R$ and 
$$\codepth(R)=\max\{i: \H_i(K^R)\neq 0\}.$$
The following coefficientwise inequality on the Poincar\'e series of $\mathrm{P}_k^R(t)$ is due to Serre:
\[
\mathrm{P}_k^R(t)\preccurlyeq\frac{(1+t)^{\varepsilon_1(R)}}{1-t\sum_{i=1}^{\codepth R}\rank_k\H_i(K^R) t^i}\, .
\]
See for example \cite[Proposition~4.1.4]{Avramov:2010} for a proof of the inequality in general. When equality is acchieved, we say that the ring $R$ is Golod. See \cite{Golod} or \cite[Section~5]{Avramov:2010} for more on Golod rings. By \cite{Golod}, $\pi^{\geqslant 2}(R)$ is the free Lie algebra on $(\shift \H_{>0}(K^R))^\vee$. In particular, whenever $\codepth(R)>1$, it follows that $\zeta(R)=\rho(R)=0$. When $\codepth(R)\leqslant 1$, the ring $R$ is a hypersurface, and hence we are in the context of \cref{ex_ci}, so that $\pi^{\geqslant 2}(R) = \pi^2(R)$.
\end{ex}

\subsection{Cohomological support varieties}\label{s_support}
We will now recall the definition of cohomological support varieties over a local ring~\cite{Jorgensen:2002,Pollitz:2019,Pollitz:2021}; these extend the ones introduced to commutative algebra by Avramov in \cite{Avramov:1989}; see also \cite{Avramov/Buchweitz:2000b}. First, some background on cohomological actions. 

\begin{ch}
\label{c_semifree}
Let $A$ be a dg $Q$-algebra. We say a dg $E$-module $F$ is \textbf{semifree} if it admits a semifree filtration; that is to say, $F$ has an exhaustive filtration by dg modules
\[
0=F(-1)\subseteq F(0)\subseteq F(1)\subseteq \ldots \subseteq F
\]
such that each subquotient $F(i)/F(i-1)$ is a sum of shifts of $A$. For example, any bounded below dg $A$-module whose underlying graded $A^\natural$-module is a free graded $A^\natural$-module is necessarily semifree. 

Recall that for each dg $A$-module $M$, there exists a surjective quasi-isomorphism  of dg $A$-modules $F\xra{\simeq} M$ with $F$ semifree. We call such a map (or $F$ itself) a semifree dg $A$-module resolution of $M$ over $A$. As the functor $\Hom_A(F,-)$ preserves surjections and quasi-isomorphisms, it follows that any two semifree resolutions are unique up to homotopy equivalence. For more general details on semifree modules see \cite{Avramov:2010} as well as \cite[Chapter~6]{Felix/Halperin/Thomas:2001}, and in the context of semifree dg modules over a Koszul complex, see \cite[Section~2]{Avramov/Buchweitz:2000a}.
\end{ch}

\begin{ch}\label{finite_semifree}
    Let $A$ be a nonnegatively graded dg $Q$-algebra. Assume that $\H_0(A)$ is a commutative noetherian ring and that each $\H_i(A)$ is finitely generated over $\H_0(A)$. By \cite[Appendix~B.2]{Avramov/Iyengar/Nasseh/SatherWagstaff:2019}, for each dg $A$-module $M$ with $\H(M)$ finitely generated over $\H(A)$, there exists a semifree resolution $F\xra{\simeq} M$ over $A$ with 
\[
F^\natural\cong \bigoplus_{j=i}^\infty \shift^j (A^\natural)^{\beta_j}
\]
for some nonnegative integers $\beta_j $. For any $s\geqslant\sup\{j:\H_j(M)\neq 0\}$, the soft truncation below $s$, given by
\[
F'= \cdots \to 0 \to \coker \del^F_{s+1}\to F_{s-1}\to F_{s-2}\to \cdots\,,
\]
is a dg $A$-module that is quasi-isomorphic to $M$. Furthermore, when $M$ is perfect over $Q$, that is, 
\[
M\simeq (0\to P_m\to P_{m-1}\to \cdots\to P_i\to 0)
\]
with each $P_i$ finite rank free over $Q$, then for a suitably high value of $s$ the soft truncation of $F$ below $s$ is a dg $A$-module quasi-isomorphic to $M$ and perfect when regarded as a $Q$-complex. 
\end{ch}

\begin{ch}
    \label{c_ext}
    Let $A$ be a dg $Q$-algebra, and $M,N$ dg $A$-modules. Recall that $\Ext_A(M,N)=\{\Ext_A^i(M,N)\}_{i\in \mathbb{Z}}$ is the graded $Q$-module given by
    \[
    \Ext_A^i(M,N)=\H^i(\Hom_A(F,N))\quad \text{for each }i\in \mathbb{Z}\,,
    \]
    where $F\xra{\simeq}M$ is a semifree resolution of $M$ over $A$. As any two semifree resolutions are unique up to homotopy equivalence, cf.\@ \cref{c_semifree}, this module is independent of the choice of semifree resolution of $M$. Elements of $\Ext_A(M,N)$ are homology classes of $A$-linear chain maps $\alpha\colon F\to \shift^iN$, and so are denoted $[\alpha]$; here $\shift^iN$ is the suspension of $N$, which is given by 
    \[
    (\shift^iN)_n=N_{n-i},\quad \del^{\shift^iN}=(-1)^i\del^N,\quad \text{and } \ a\cdot \shift^im=(-1)^{|a|i} \shift^i(am)\,.
    \]

   Recall there is a well-defined composition pairing
   \[
   \Ext_A(M,N)\otimes_Q\Ext_A(L,M)\to \Ext_A(L,N)\quad\text{given by }([\alpha],[\beta])\mapsto  [\shift^{|\beta|}\alpha\circ \tilde{\beta}]
   \] 
   where $\tilde{\beta}$ is a lift of $\beta$ fitting into the commutative diagram 
   \begin{center}
       \begin{tikzcd}
            & \shift^{|\beta|}G\ar[d,"\simeq"]\\
           F\ar[r,swap,"\beta"]\ar[ur,"\tilde{\beta}"]& \shift^{|\beta|}M 
       \end{tikzcd}
   \end{center}
   with  $F\xra{\simeq}L$ and $G\xra{\simeq} M$ semifree resolutions over $A$. This pairing makes each $\Ext_A(M,M)$ a graded $Q$-algebra, and each $\Ext_A(M,N)$ a graded $\Ext_A(N,N)$-$\Ext_A(M,M)$ bimodule.
\end{ch}

\begin{rem}
\label{r_pi_as_derivations}
By a theorem of Gulliksen~\cite{Gulliksen:1968} and Schoeller~\cite{Schoeller:1967}, the minimal free resolution $F$ of $k$ over $R$ admits a dg algebra structure where the underlying algebra is a free divided power algebra. Let $D$ denote the collection of all $R$-linear derivations $F\to F$ that respect divided powers. Then $D$ is a subcomplex of the endomorphism complex of $F$, and there is an isomorphism of graded $k$-vector spaces
\begin{equation}\label{e_pi}
\H(D)\cong \pi(R)\,;
\end{equation}
see \cite[Theorem~10.2.1]{Avramov:2010}  Also, in \cite[Chapter~10]{Avramov:2010} it is shown that $\H(D)$ is a graded Lie algebra over $k$, with the bracket coming from the graded commutator, whose universal envelope is $\Ext_R(k,k)$. Furthermore, the isomorphism in \cref{e_pi} above is an isomorphism of graded Lie algebras.
\end{rem}

For the remainder of the section, assume that $R=Q/I$ is a minimal regular presentation, meaning that in addition to the previous assumptions we further assume that $Q$ is regular. Fix a list of minimal generators $\f=f_1,\ldots,f_n$ for $I$; recall that the number $n$ is $\varepsilon_2(R)$. Also, set
\[
E \coloneqq Q[ e_1,\ldots,e_n\mid \del e_i=f_i]
\quad
\text{ and }
\quad
\S\coloneqq k[\chi_1,\ldots,\chi_n]
\]
where each $\chi_i$ has cohomological degree 2. As $\H_0(E)=R$, the augmentation map $E\to R$ is a map of dg $Q$-algebras. Hence, each $R$-complex is a dg $E$-module via restriction of scalars. 

\begin{ch}
\label{c_cohsupp}
By \cite[Section~2]{Avramov/Buchweitz:2000a}, $\S$ can be identified with a graded $k$-subalgebra of $\Ext_E(k,k)$. In fact, $\S\subseteq \Ext_E(k,k)$ is a finite free extension of graded $\S$-modules; see \cite[Remark~3.2.6]{Pollitz:2019}. Hence for any dg $E$-module $M$, through the composition  product in \cref{c_ext}, the graded $k$-space $\Ext_E(M,k)$ is a graded $\S$-module. 
Moreover, for any  dg $E$-module $M$ with $\H(M)$ finitely generated over $R$, the graded $\S$-module $\Ext_E(M,k)$ is finitely generated; see \cite[Proposition~3.2.5]{Pollitz:2019}, as well as \cref{c_computing_support}. 

Given an $R$-complex $M$, we define the \textbf{cohomological support variety} of $M$ as
\[
\V_R(M)\coloneqq \supp_\S \Ext_E(M,k)=\{\p\in \spec \S: \Ext_E(M,k)_\p\neq 0\}\,,
\]
where $\spec \S$ denotes the homogeneous spectrum of $\S$. When $\H(M)$ is finitely generated as an $R$-module, it follows that $\V_R(M)$ is the Zariski-closed subset 
\[
\cV(\ann_\S\Ext_E(M,k))=\{\p\in \spec \S: \p\supseteq \ann_\S\Ext_E(M,k)\}\,.
\]
Note that $
\V_R(M)=\varnothing \text{ if and only if } M\simeq 0$. Furthermore, when $\H(M)$ is finitely generated over $R$ then $\V_R(M)\subseteq \{(\chi_1,\ldots,\chi_n)\}$ if and only if $\Ext_E^{\gg 0}(M,k)=0.$

We will denote the irrelevant maximal ideal $(\chi_1,\ldots,\chi_n)$ of $\S$ by $\bm{0}$.
\end{ch}

\begin{rem}
When $R$ is an arbitrary local ring, then $\V_R(M)$ is defined in terms of a minimal Cohen presentation $\widehat{R}\cong Q/I$. 
The fact that $\V_R(M)$ is independent of the choice of a Cohen presentation, and of the minimal generators for the defining ideal of $\widehat{R}$ in such a presentation, is dealt with in \cite[Theorem~6.1.2]{Pollitz:2021}; this also applies to the independence of the generators for the setting above. 
\end{rem}

\begin{ex}
    As $\Ext_E(k,k)$ is finite free over $\S$, it follows that $\V_R(k)=\spec \S$; see \cref{c_cohsupp}. 
\end{ex}

\begin{ex}
\label{ex_characterization}
If $R$ is complete intersection, then $E\xra{\, \simeq \,} R$ and thus $\Ext_E(R,k)=k$, so $\V_R(R)=\{\bm{0}\}$. By \cite[Theorem~3.3.2]{Pollitz:2019},  the converse holds: if $\V_R(R)=\{\bm{0}\}$, then $R$ is complete intersection. Furthermore, $R$ is  complete intersection if and only if $\V_R(M)=\{\bm{0}\}$ for some finitely generated $R$-module $M$; cf.\@  \cite[Theorem~6.1.6]{Pollitz:2021}.
\end{ex}

In this paper, we will be particularly interested in studying $\V_R(R)$ when $R$ is not complete intersection. 

\begin{ex}
If $R$ is Golod and not a hypersurface ring, then $\V_R(R)=\spec \S$ by \cite[Theorem~4.1]{BEJ2}.
\end{ex}

\begin{ex}
\label{ex_codepth}
Recall that $R$ has an embedded deformation if  $R \cong S/(f)$ for some local ring $S$ with $f$ an $S$-regular element in the square of the maximal ideal of $S$; cf.\@ \cref{s_deformations}. It was calculated in \cite[Theorem~6.3.5]{Pollitz:2021}, that whenever  $\codepth(R)\les 3$ we have
\[
\V_R(R)=\begin{cases}
\{\bm{0}\} & \text{if } R \text{ is complete intersection}\\
\spec \S & \text{if $R$ does not admit an embedded deformation}\\
\cV(\zeta) & \text{for some }\zeta\in \S^2, \text{otherwise}.
\end{cases}
\]
\end{ex}

\begin{ex}
\label{lucho_example}
An example of a non-linear support $\V_R(R)$ was given in \cite[Example~5.4]{Briggs/Grifo/Pollitz:2022}: the ideal $I=(x^2, xy, yz, xw, w^2)$ in $Q = k[x,y,z,w]$ defining the ring $R=Q/I$ considered by Avramov in \cite[2.2]{Avramov:1981}, we have
\vspace{-0.5em}
\[
\V_R(R) = \cV(\chi_1\chi_5) = \cV(\chi_1)\cup \cV(\chi_5) \subseteq \spec k[\chi_1,\ldots,\chi_5]\,.
\]
\end{ex}

\vspace{0.5em}

\section{Computing cohomological support varieties}\label{s_computing_support}

In this section we fix a minimal regular presentation $R=Q/I$, and let $k$ denote the residue field of $R$. We also fix an algebraic closure $K$ of $k$.

Let $\f=f_1,\ldots,f_n$ minimally generate $I$, and set
\[
E \coloneqq Q[ e_1,\ldots,e_n\mid \del e_i=f_i]\,.
\]
Also, define
$\S\coloneqq k[\chi_1,\ldots,\chi_n]$,
where each $\chi_i$ has cohomological degree 2.

\begin{chunk}\label{c_computing_support}
    Let $F$ be a dg $E$-module. Define the dg $\S$-module $\cC_E(F)$ as
    \[
    \cC_E(F)^\natural\coloneqq \S\otimes_k \Hom_Q(F,k)\quad\text{with}\quad\del^{\cC_E(F)}=1\otimes \del^{\Hom(F,k)}+\sum_{i=1}^n \chi_i\otimes e_i \,.
    \]
    As $\cC_E(F)$ is a dg $\S$-module its homology $\H(\cC_E(F))$ is a graded $\S$-module. 

  Fix a dg $E$-module $M$ with $\H(M)$ finitely generated over $R$. In this case, there is an isomorphism of graded $\S$-modules
  \[
  \Ext_E(M,k)\cong \H(\cC_E(F))
  \]
  where $F$ is any dg $E$-module quasi-isomorphic to $M$ that is also a bounded below complex of finite rank free $Q$-modules; cf.\@ \cite[3.2]{Pollitz:2021} and \cite[Sections 2 and 3]{Avramov/Buchweitz:2000a}. In fact, there exists such $F$ that is perfect over $Q$ (that is to say, a bounded complex of free $Q$-modules); see \cref{finite_semifree}. In that case, $\cC_E(F)$ is a finite rank free graded $\S$-module and hence, this gives a direct way to see that $\Ext_E(M,k)$ is finitely generated over $\S$.  
\end{chunk}

\begin{chunk}\label{dg_structure}
    Suppose $\bm{g} = g_1,\ldots,g_m$ is a list of elements in $Q$ with $I\subseteq(\bm{g})$. 
Fix  a $Q$-free resolution $A\xra{\simeq} Q/(\bm{g})$, where $A$ is a dg $Q$-algebra.
Writing 
\[
    f_i=\sum_{j=1}^m c_{ij} g_j \quad\text{with}\quad a_{ij}\in Q\,
\]
defines a morphism of dg $Q$-algebras $E\to A$ determined by
\[
   e_i\mapsto \sum_{j=1}^m c_{ij} a_j\quad\text{with}\quad a_j\in A_1\text{ such that } \ \del a_j=g_j\,. 
\]
In particular, the morphism above defines a dg $E$-module structure on $A$. When $A$ is minimal, then $\cC_E(A)$ can be identified with the complex of graded $\S$-modules 
\[
\cdots \to \shift^{-4}\S\otimes_k \Hom_Q(A_2,k)\xra{ \sum_{i,j} \chi_i\otimes c_{ij}a_j}\shift^{-2}\S\otimes_k \Hom_Q(A_1,k)\xra{ \sum_{i,j} \chi_i\otimes c_{ij}a_j} \S\to 0\,.
\]
\end{chunk}

\begin{example}
\label{ex_ext}
    For $k$, the Koszul complex $K^Q$ satisfies $K^Q\xra{\simeq} k$ as dg $E$-modules. Moreover, $e_iK^Q\subseteq \m K^Q$ by \cref{dg_structure}.  Hence, $\cC_E(K^Q)=\S\otimes_k \Hom_Q(K^Q,k)$ and hence, 
    \[
    \Ext_E(k,k)\cong \S\otimes_k \Hom_Q(K^Q,k)\,.
    \]
\end{example}

\begin{lemma}\label{l_tor_ind}
   Fix ideals $I_1$ and $I_2$ of $Q$, and let $R_i=Q/I_i$ and $E_i$ denote the Koszul complex on a minimal list of generators for $I_i$ over $Q$. Set $R=R_1\otimes_Q R_2$ and $E=E_1\otimes_Q E_2$. If $\Tor^Q_i(R_1,R_2)=0$ for $i>0$, then there is an isomorphism 
   \vspace{-0.5em}
   \[
   \Ext_E(R,k)\cong \Ext_{E_1}(R_1,k)\otimes_k \Ext_{E_2}(R_2,k)
   \]
   that is compatible with the isomorphism of graded $k$-algebras $\S\cong \S_1\otimes_k \S_2$, where $\S_i$ and $\S$ are the polynomial rings of cohomology operators in $\Ext_{E_i}(k,k)$ and $\Ext_E(k,k)$, respectively. 
\end{lemma}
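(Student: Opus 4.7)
The plan is to build an explicit dg $E$-module quasi-isomorphic to $R$ that splits as a tensor product, then use the functor $\cC_E$ from \cref{c_computing_support} to compute $\Ext_E(R, k)$, and finally invoke Künneth over the field $k$. Note that the hypothesis $\Tor_i^Q(R_1, R_2) = 0$ for $i > 0$ is equivalent to $I_1 \cap I_2 = I_1 I_2$, which in particular forces the concatenation of the minimal generators of $I_1$ and $I_2$ to be a minimal generating set of $I_1 + I_2$; so $n = n_1 + n_2$, and the identification $\S \cong \S_1 \otimes_k \S_2$ is canonical.

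First I would construct the resolution. By \cref{finite_semifree}, choose for $i = 1, 2$ a semifree resolution $F_i \xrightarrow{\simeq} R_i$ over $E_i$ that is perfect as a $Q$-complex (this is possible because $Q$ is regular, so $R_i$ is perfect over $Q$). Form $F \coloneqq F_1 \otimes_Q F_2$; this is naturally a dg module over $E = E_1 \otimes_Q E_2$ and perfect over $Q$. Since each $F_i$ is a bounded complex of free $Q$-modules and $\Tor_{>0}^Q(R_1, R_2) = 0$, the Künneth formula over $Q$ yields $\H(F) = R_1 \otimes_Q R_2 = R$, so $F$ is a dg $E$-module resolution of $R$ of the type needed for \cref{c_computing_support}.

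Next I would identify $\cC_E(F)$ with $\cC_{E_1}(F_1) \otimes_k \cC_{E_2}(F_2)$ as dg $\S$-modules. Since each $F_i$ is a bounded complex of finite rank free $Q$-modules, there is a canonical isomorphism
\[
\Hom_Q(F_1 \otimes_Q F_2, k) \cong \Hom_Q(F_1, k) \otimes_k \Hom_Q(F_2, k),
\]
which under $\S = \S_1 \otimes_k \S_2$ gives an isomorphism on the underlying graded $\S$-modules. By construction of the dg $E$-action on $F$, the exterior generators coming from $E_1$ act only on the $F_1$-factor, and those from $E_2$ only on the $F_2$-factor; consequently the twisting term $\sum_i \chi_i \otimes e_i$ in the differential of $\cC_E(F)$ splits as the sum of the analogous twists for $\cC_{E_1}(F_1)$ and $\cC_{E_2}(F_2)$. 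Combined with the compatibility of the ordinary Hom differential with the tensor decomposition, this gives the claimed isomorphism of dg $\S$-modules; applying Künneth over the field $k$ then yields
\[
\Ext_E(R, k) \cong \H(\cC_E(F)) \cong \H(\cC_{E_1}(F_1)) \otimes_k \H(\cC_{E_2}(F_2)) \cong \Ext_{E_1}(R_1, k) \otimes_k \Ext_{E_2}(R_2, k),
\]
which is the desired isomorphism, compatible with $\S = \S_1 \otimes_k \S_2$.

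The main technical obstacle is the middle step: verifying with the standard Koszul sign conventions that the differential on $\cC_E(F)$ really does match the tensor product differential on $\cC_{E_1}(F_1) \otimes_k \cC_{E_2}(F_2)$. This amounts to tracking the parities of the cohomology operators $\chi_i$ (cohomological degree $2$, hence even) against the exterior variables $e_i$ (homological degree $1$, hence odd), and checking that the signs introduced when moving $E_2$-variables past elements of $F_1$ behave as required by the tensor product of dg modules.
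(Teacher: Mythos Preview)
Your proposal is correct and follows essentially the same approach as the paper: choose dg $E_i$-module resolutions $F_i$ of $R_i$ that are bounded complexes of finite free $Q$-modules, use the Tor-vanishing to see that $F_1\otimes_Q F_2$ resolves $R$ as a dg $E$-module, identify $\cC_E(F_1\otimes_Q F_2)\cong \cC_{E_1}(F_1)\otimes_k\cC_{E_2}(F_2)$, and apply K\"unneth over $k$. The only cosmetic difference is that the paper phrases the choice of $F_i$ as a free $Q$-resolution equipped with a dg $E_i$-module structure (as in \cref{dg_structure}) rather than as a truncated semifree $E_i$-resolution, and it simply asserts the isomorphism of dg $\S$-modules without dwelling on the sign bookkeeping you flag at the end.
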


\begin{proof}
    As $(I_1 \cap I_2) / I_1 I_2 \cong \Tor_{1}^Q(R_1,R_2)=0$, it follows that 
    \[
    \frac{I}{\m I}\cong \frac{I_1}{\m I_1}\oplus \frac{I_2}{\m I_2}\,,
    \]
     which induces an isomorphism of $k$-algebras $\S\cong \S_1\otimes_k \S_2$ and an isomorphism of dg $Q$-algebras $E\cong E_1\otimes_Q E_2.$ Now fixing free $Q$-resolutions $F_i\xra{\,\simeq\,} R_i$ with a dg $E_i$-module structure, it follows from $\Tor_{>0}^Q(R_1,R_2)=0$ that $F=F_1\otimes_Q F_2$ is a free $Q$-resolution of $R$ that has a dg $E$-module structure. In particular, there is an isomorphism of dg $\S$-modules:
    \[
  \cC_{E}(F)\cong \cC_{E_1}(F_1)\otimes_k \cC_{E_2}(F_2)
    \]
    where the right-hand side is considered an $\S$-module via the isomorphism $\S\cong \S_1\otimes_k \S_2.$ Now by applying \cref{c_computing_support} and the K\"unneth formula we obtain the desired isomorphism of graded $\S$-modules:
    \[
    \Ext_E(R,k)\cong \Ext_{E_1}(R_1,k)\otimes_k \Ext_{E_2}(R_2,k)\,.\qedhere
    \]
\end{proof}
 
\begin{chunk}
\label{c_support_using_hypersurfaces}
Let $Q \to \tilde{Q}$ be a flat local extension which induces the inclusion on residue fields $k\subseteq K$, and for each $R$-complex $M$ set $\tilde{M}=M\otimes_Q \tilde{Q}$. For each $a\in \mathbb{A}_K^n$, fix a lift $\tilde{a_i}$  of $a_i$ to $\tilde{Q}$, and let $Q_a=\tilde{Q}/(\sum \tilde{a_i}f_i)$. 
If $\H(M)$ is finitely generated over $R$, 
\[\Ext_{Q_a}^i(\tilde{M},K) \neq 0 \text{ for all } i \gg 0 \quad \text{if and only if} \quad \pdim_{Q_a}(\tilde{M})=\infty\,.\]
By \cite[Theorem~5.2.4]{Pollitz:2021} and the Nullstellensatz, the variety $\V_R(M)$ is determined by and determines the following conical variety in affine space $\mathbb{A}_K^n$: 
\[
\{a=(a_1,\ldots,a_n)\in \mathbb{A}_K^n: \pdim_{Q_a}(\tilde{M})=\infty \}\cup\{0\}\,.\]
Hence, $\V_R(M)$ records those directions in $\mathbb{A}_K^n$ in which $M$ has infinite projective dimension. 
Since the closed subsets in $\spec \S$ and $\mathbb{A}_K^n$, corresponding to the cohomological support of $M$, determine one another uniquely we will freely identify these objects.
\end{chunk}

The next proposition is an immediate consequence of \cref{l_tor_ind}. 

\begin{proposition}\label{tor independent gives union of varieties}
Let $R_1=Q/I_1$, $R_1=Q/I_1$, and $R=R_1 \otimes_Q R_2$. If $\Tor^Q_i(R_1,R_2)=0$ for $i>0$, then the following diagram commutes:
\vspace{-0.7em}
\[
    \begin{tikzcd}
    \A_K^{\varepsilon_2(R)}\ar[r,"\cong"] & \A_K^{\varepsilon_2(R_1)}\times_K   \A_K^{\varepsilon_2(R_2)} \\
        \V_R(R)\ar[u,"\subseteq"] \ar[r,"\cong"] & \V_{R_1}(R_1)\times_K  \V_{R_2}(R_2)\ar[u,swap,"\subseteq"] 
    \end{tikzcd}
\]
\end{proposition}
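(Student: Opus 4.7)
My plan is to deduce the proposition directly from \cref{l_tor_ind}. That lemma provides an isomorphism of graded $\S$-modules
\[
\Ext_E(R,k) \;\cong\; \Ext_{E_1}(R_1,k) \otimes_k \Ext_{E_2}(R_2,k),
\]
compatible with a decomposition $\S \cong \S_1 \otimes_k \S_2$ of graded polynomial $k$-algebras. After extending scalars to $K$, this decomposition is precisely the top horizontal isomorphism $\A_K^{\varepsilon_2(R)} \cong \A_K^{\varepsilon_2(R_1)} \times_K \A_K^{\varepsilon_2(R_2)}$ in the diagram. Consequently, commutativity of the diagram reduces to showing that, under this identification, $\V_R(R)$ equals $\V_{R_1}(R_1) \times_K \V_{R_2}(R_2)$ as subsets of the ambient affine space.

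The key step is the following general statement: if $N_i$ is a finitely generated graded $\S_i$-module for $i = 1, 2$, then
\[
\supp_{\S_1 \otimes_k \S_2}(N_1 \otimes_k N_2) \;=\; \supp_{\S_1}(N_1) \times_K \supp_{\S_2}(N_2).
\]
The inclusion $\subseteq$ is formal, since the ideal $\ann_{\S_1}(N_1)\otimes_k\S_2 + \S_1\otimes_k\ann_{\S_2}(N_2)$ annihilates $N_1 \otimes_k N_2$ and cuts out the right-hand side. For the reverse inclusion, I would argue pointwise over $K$: a $K$-point $(a_1, a_2)$ of the product lies in the support of $N_1 \otimes_k N_2$ iff its residue fibre is nonzero (by Nakayama applied after base change to $K$, using that $N_1 \otimes_k N_2$ is finitely generated). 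This residue fibre decomposes, as a tensor product of $K$-vector spaces, as
\[
\bigl(N_1\otimes_{\S_1}K\bigr) \otimes_K \bigl(N_2\otimes_{\S_2}K\bigr),
\]
and is therefore nonzero exactly when both factors are, i.e.\ when $a_i \in \supp_{\S_i}(N_i)$ for each $i$.

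Taking $N_i \coloneqq \Ext_{E_i}(R_i,k)$, which is finitely generated over $\S_i$ by \cref{c_cohsupp}, this general statement combines with \cref{l_tor_ind} to yield $\V_R(R) = \V_{R_1}(R_1) \times_K \V_{R_2}(R_2)$, finishing the proof. The only genuinely non-routine step is keeping track of the base change to $K$ carefully so that the residue-fibre identification and the application of Nakayama go through cleanly; because each $\S_i$ is a polynomial $k$-algebra whose $K$-points all have residue field canonically equal to $K$, this tracking is mechanical.
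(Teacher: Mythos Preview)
Your proposal is correct and follows precisely the paper's approach: the paper records this proposition as ``an immediate consequence of \cref{l_tor_ind}'' with no further argument, and you have simply unpacked what ``immediate'' means---namely, that the support of a tensor product $N_1\otimes_k N_2$ over $\S_1\otimes_k\S_2$ is the product of the supports. Your justification via residue fibres and Nakayama is a standard way to verify this (an alternative is to write $N_1\otimes_k N_2\cong (N_1\otimes_{\S_1}\S)\otimes_\S(\S\otimes_{\S_2}N_2)$ and use that support of a tensor product of finitely generated modules is the intersection of supports, together with the pullback formula for support along the flat maps $\S_i\hookrightarrow\S$).
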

 
\begin{chunk}
\label{c_2_periodic}
    Let $M$ be an $R$-complex with $\H(M)$ finitely generated. 
    There is a bounded complex of finite rank free $Q$-modules $F$ that is quasi-isomorphic to $M$ as a dg $E$-module. For any $a=(a_1,\ldots,a_n)\in \mathbb{A}_K^n$, the $K$-complex $\cC_{E_a}(F\otimes_Q \tilde{Q})$ is isomorphic to the following 2-periodic $K$-complex in sufficiently high degrees: 
\[\cdots  \longrightarrow \Hom_Q(F_{\rm even},k)\otimes_k K\xra{\ d_a\ }  \Hom_Q(F_{\rm odd},k)\otimes_k K\xra{\ d_a\ }
        \Hom_Q(F_{\rm even},k)\otimes_k K \longrightarrow \cdots 
    \]
    where $E_a=\tilde{Q}[e_a\mid \del e_a=\sum \tilde{a_i}f_i]$, and $d_a\coloneqq\del^{\Hom(F,k)}\otimes 1+\sum e_i\cdot \otimes a_i$.
    The 2-periodic complex above and its $K$-linear dual are exact simultaneously, and so we consider
    \[
    \widehat{\cC}_{E_a}(F)\coloneqq\cdots \longrightarrow  F_{\rm even}\otimes_Q  K\xra{\ d_a\ }    F_{\rm odd}\otimes_Q K\xra{\ d_a\ }
        F_{\rm even}\otimes_Q K\longrightarrow \cdots 
    \]
    where
    \[
    d_a\coloneqq\del^{F}\otimes 1+\sum e_i \otimes a_i.
    \]
\end{chunk}

The following is well-known to the experts, but also follows from \cref{c_support_using_hypersurfaces} and \cref{c_2_periodic}, and so we record it here for completeness; see also \cite{Avramov/Buchweitz:2000b} as well as \cite{Avramov/Iyengar:2018}.

\begin{proposition}\label{prop_supp_Chat}
For an $R$-complex $M$ with $\H(M)$ finitely generated, fix a dg $E$-module $F$ that is a bounded complex of finite rank free $Q$-modules and quasi-isomorphic to $M$ as a dg $E$-module. Then
    \[
    \V_R(M)=\{a\in \mathbb{A}_K^n: \H(\widehat{\cC}_{E_a}(F))\neq 0\} \cup\{0\}=\{a\in \mathbb{A}_K^n: \H_p(\widehat{\cC}_{E_a}(F))\neq 0\} \cup\{0\}\,,
    \]
    where $p\in \{\mathrm{even},\mathrm{odd}\}$. 
\end{proposition}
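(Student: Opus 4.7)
The plan is to combine the description of $\V_R(M)$ via projective dimensions over the hypersurfaces $Q_a$ from \cref{c_support_using_hypersurfaces} with the explicit $2$-periodic model from \cref{c_2_periodic}. I assume $\H(M)\neq 0$; both $\V_R(M)$ and the right-hand side then contain the origin, so the $\cup\{0\}$ is consistent and I only need to show agreement on $\mathbb{A}_K^n\setminus\{0\}$.

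By \cref{c_computing_support} the graded $\S_a=K[\chi_a]$-module $\Ext_{E_a}(\tilde M,K)$ is computed by $\cC_{E_a}(F\otimes_Q\tilde Q)$, and by \cref{c_2_periodic} this complex agrees in sufficiently high cohomological degrees with the $\Hom$-version of the $2$-periodic complex, which is the $K$-dual of $\widehat{\cC}_{E_a}(F)$. Since these duals are simultaneously exact, I obtain
\[
\H(\widehat{\cC}_{E_a}(F))\neq 0 \iff \Ext^i_{E_a}(\tilde M,K)\neq 0 \text{ for all } i\gg 0.
\]

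For $a\neq 0$, regularity of $Q$ makes $\tilde Q$ a regular domain, and minimality of $\f$ identifies $I\tilde Q/\tilde\m I\tilde Q$ with $K^n$, sending $g\coloneqq\sum\tilde a_if_i$ to $(a_1,\ldots,a_n)\neq 0$; hence $g$ is a non-zerodivisor and $E_a\xra{\simeq} Q_a$, giving $\Ext_{E_a}(\tilde M,K)\cong \Ext_{Q_a}(\tilde M,K)$. Combining with \cref{c_support_using_hypersurfaces}, I get that $a\in\V_R(M)$ iff $\pdim_{Q_a}(\tilde M)=\infty$ iff $\Ext^i_{Q_a}(\tilde M,K)\neq 0$ for $i\gg 0$, which establishes the first equality.

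For the single-parity equality I need $\H_{\mathrm{even}}(\widehat{\cC}_{E_a}(F))=0 \iff \H_{\mathrm{odd}}(\widehat{\cC}_{E_a}(F))=0$. If $\pdim_{Q_a}(\tilde M)<\infty$ both vanish. Otherwise, an MCM approximation of $\tilde M$ over the hypersurface $Q_a$ gives a non-free MCM $Q_a$-module $X$ whose Tate cohomology matches that of $\tilde M$ in high degrees. Because $X$ has no free summand, any matrix factorization $(\phi,\psi)$ of $g$ presenting $X$ has entries in $\tilde\m$, so both differentials in the resulting $2$-periodic complex vanish modulo $\tilde\m$, yielding $\H_{\mathrm{even}}=\H_{\mathrm{odd}}=K^r$ with $r>0$. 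This is the main obstacle: a priori a $2$-periodic complex of finite-dimensional $K$-vector spaces can be exact in only one parity, and the symmetry here relies on the matrix-factorization structure of MCM modules over the hypersurface $Q_a$. The remaining bookkeeping is a straightforward unpacking of the definitions in \cref{s_support,s_computing_support}.
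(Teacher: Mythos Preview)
Your argument is correct and follows essentially the same route as the paper: both combine \cref{c_support_using_hypersurfaces}, \cref{c_computing_support} (applied over $\tilde Q$ to $E_a$), and \cref{c_2_periodic}. You are in fact more explicit than the paper on two points: you spell out why $E_a\xra{\simeq}Q_a$ for $a\neq 0$, and you justify the single-parity statement, which the paper's proof leaves implicit.

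On that last point, your matrix-factorization argument is valid but heavier than needed. The equivalence stated in \cref{c_support_using_hypersurfaces}, namely that $\Ext^i_{Q_a}(\tilde M,K)\neq 0$ for \emph{all} $i\gg 0$ is equivalent to $\pdim_{Q_a}(\tilde M)=\infty$, already encodes the parity symmetry: its contrapositive says that a single vanishing $\Ext^i_{Q_a}(\tilde M,K)$ with $i\gg 0$ forces finite projective dimension, hence vanishing of all high Ext. This is a general fact over any local ring (a minimal free resolution has $\Ext^i(-,k)\cong\Hom(G_i,k)$, and $G_i=0$ with $i>\sup\H(M)$ forces $G_{\geqslant i}=0$ by Nakayama), so you need not invoke MCM approximations or the hypersurface structure specifically.
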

\begin{proof}
    Let $a\in \mathbb{A}_K^n$. Then 
    \begin{align*}
        a\notin \V_R(M)&\iff \Ext_{Q_a}^{\gg 0}(\tilde{M},K)=0 & \text{by \cref{c_support_using_hypersurfaces}}\\
        &\iff \H^{\gg 0}(\cC_{E_a}(F\otimes_Q \tilde{Q}))=0 & \text{by \cref{c_computing_support} over } Q_a \\
        &\iff \H_{p}(\widehat{\cC}_{E_a}(F))=0\quad \text{ for }p\in \{\text{even},\text{odd}\}\,.
    \end{align*}
    The final equivalence holds as $\Hom_K(\cC_{E_a}(F\otimes_Q \tilde{Q}),K)$ in high degrees agrees with $\widehat{\cC}_{E_a}(F)$; cf.\@ \cref{c_2_periodic}.
    \end{proof}

\begin{example}\label{e_homotopies}
Assume  $R= Q/I$ with $I$ minimally generated by $f_1,f_2$, and for notational ease, further assume that $Q$ is equicharacteristic and $k$ is algebraically closed, so $k=K$. Since $Q$ is a UFD, we can write $f_i=f_i'g$ for some $Q$-regular sequence $f_1',f_2'$, where $g=1$ when $R$ is complete intersection and $g\in \m$ otherwise. The minimal $Q$-free resolution $F$ of $R$ is
  \[
    0 \longrightarrow
    Q\xra{\begin{pmatrix} -f_2'\\f_1' \end{pmatrix}}
    Q^2 \xra{\begin{pmatrix} f_1 & f_2\end{pmatrix}}
    Q \longrightarrow
    0
  \]
  and it supports a dg $E$-module structure with the $e_i$ action given by:
\[   e_1\cdot : \  0 \longleftarrow
    Q\xleftarrow{\begin{pmatrix} 0 & g \end{pmatrix}}
    Q^2 \xleftarrow{\displaystyle \begin{pmatrix} 1 \\ 0 \end{pmatrix}}
    Q \longleftarrow
    0  \quad \text{and} \quad   e_2\cdot: \  0 \longleftarrow
    Q\xleftarrow{\begin{pmatrix} -g & 0 \end{pmatrix}}
    Q^2 \xleftarrow{\displaystyle \begin{pmatrix} 0 \\ 1 \end{pmatrix}}
    Q \longleftarrow
    0\,.  \]

 Now for any $(a_1,a_2)\in k$, the two-periodic complex $\widehat{C}_{E_a}(F)$ is 
 \[
 \cdots \longrightarrow k^2\xra{\begin{pmatrix}
     a_1 & 0 \\
     a_2 & 0
 \end{pmatrix}} k^2\xra{\begin{pmatrix}
     0 & 0 \\
     -a_2\overline{g} & a_1\overline{g}
 \end{pmatrix}} k^2\xra{\begin{pmatrix}
     a_1 & 0 \\
     a_2 & 0
 \end{pmatrix}}k^2 \xra{\begin{pmatrix}
     0 & 0 \\
     -a_2\overline{g} & a_1\overline{g}
 \end{pmatrix}} k^2\longrightarrow \cdots\,.
 \]
 In particular, whenever $(a_1,a_2)$ is nonzero, the complex $\widehat{C}_{E_a}(F)$ is exact if and only if $g=1.$ Therefore, if $R$ is not complete intersection then $\V_R(R)=\mathbb{A}_k^2$; cf.\@ \cref{ex_characterization}.
\end{example}

\begin{proposition}\label{linear_nzd}
If $x$ is an element of $\m\smallsetminus\m^2$ and $M$ is an $R$-module on which $x$ is a nonzero divisor, then there is a canonical isomorphism $\V_R(M)\cong\V_{R/(x)}(M/xM)$ of subvarieties of $\mathbb{A}^n_k$.
\end{proposition}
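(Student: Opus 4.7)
The plan is to realize $\V_R(M)$ and $\V_{R/(x)}(M/xM)$ as supports of isomorphic graded $\S$-modules via a Koszul enlargement of $E$.

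First, fix a minimal regular presentation $R=Q/I$ with $\f=f_1,\ldots,f_n$ minimally generating $I$, and pick a lift $\tilde x\in \m_Q$ of $x$. Since $I\subseteq \m_Q^2$, the natural map $\m_Q/\m_Q^2\to \m/\m^2$ is an isomorphism, so $\tilde x\notin \m_Q^2$; hence $\tilde x$ is a regular parameter of $Q$, the quotient $Q'\coloneqq Q/(\tilde x)$ is regular, and the images $\bar f_1,\ldots,\bar f_n$ minimally generate $I'\coloneqq IQ'$. Thus $R/(x)=Q'/I'$ is again a minimal regular presentation, $\varepsilon_2(R/(x))=n$, and both cohomological supports live in $\mathbb{A}_k^n$ with the same polynomial ring $\S=k[\chi_1,\ldots,\chi_n]$ of cohomology operators.

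Next, introduce the Koszul enlargement $\tilde E\coloneqq E[y\mid \del y=\tilde x]$ with $|y|=1$. Since $\tilde x$ is $Q$-regular and $E$ is free as a graded $Q$-module, $\tilde x$ is $E$-regular, so the projection $\tilde E\twoheadrightarrow \tilde E/(y)=E'$ is a quasi-isomorphism of dg $Q$-algebras. Moreover, $\tilde E$ is free of rank two as a graded $E^\natural$-module on $\{1,y\}$, so $E\to \tilde E$ is flat. The hypothesis that $x$ is $M$-regular is precisely the condition that the Koszul augmentation
\[
M\otimes_E \tilde E \;=\; M[y\mid \del y=x] \;\twoheadrightarrow\; M/xM
\]
is a quasi-isomorphism of dg $\tilde E$-modules, where $M/xM$ is viewed as a $\tilde E$-module by restriction along $\tilde E\to E'$.

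Chaining these with change of rings yields the key graded $\S$-linear isomorphisms
\[
\Ext_{E'}(M/xM,k) \;\cong\; \Ext_{\tilde E}(M/xM,k) \;\cong\; \Ext_{\tilde E}(M\otimes_E \tilde E,k) \;\cong\; \Ext_E(M,k),
\]
where the first uses the quasi-iso $\tilde E\to E'$, the second uses the quasi-iso $M\otimes_E\tilde E\to M/xM$, and the third is the adjunction $\RHom_{\tilde E}(-\otimes_E\tilde E,k)\cong \RHom_E(-,k)$ for the flat extension $E\to \tilde E$ (using that $k$ restricted from $\tilde E$ to $E$ is the usual augmentation module, and that flatness of $\tilde E/E$ identifies the ordinary and derived tensor products). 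Taking $\S$-supports yields the identification $\V_R(M)=\V_{R/(x)}(M/xM)$ inside $\mathbb{A}_k^n$.

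The main thing requiring care is $\S$-linearity of each isomorphism in the chain. The cleanest way to verify this is through the explicit complex $\cC_E(F)=\S\otimes_k \Hom_Q(F,k)$ from \cref{c_computing_support}: pick a semifree resolution $F\xra{\simeq} M$ over $E$ that is also a bounded complex of finite rank free $Q$-modules; then $F[y\mid \del y=x]$ serves as such a resolution of $M/xM$ over $\tilde E$, and a direct computation shows that $\cC_{\tilde E}(F[y])$ recovers $\cC_E(F)$ up to the acyclic $y$-direction. Since the operator $\chi_i$ is implemented by multiplication with the same cocycle $e_i$ in each of $E$, $\tilde E$, and $E'$, the $\S$-action transports transparently along the chain, concluding the argument.
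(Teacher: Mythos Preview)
Your argument is correct, but the paper's proof is considerably more direct. Rather than passing through the Koszul enlargement $\tilde E$ and a chain of derived isomorphisms, the paper simply picks a $Q$-free resolution $F\to M$ carrying a dg $E$-module structure, observes that $F/xF\to M/xM$ is then a $Q/(x)$-free resolution with an induced dg $\overline{E}=E/xE$-module structure, and notes that the two-periodic complex $\widehat{\cC}_{E_a}(F)$ is \emph{literally identical} to $\widehat{\cC}_{\overline{E}_a}(F/xF)$ (since $\Hom_Q(F,k)=\Hom_{Q/(x)}(F/xF,k)$ and the $e_i$-actions agree). An appeal to \Cref{prop_supp_Chat} finishes the proof in two lines.

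What your approach buys is a conceptual explanation in terms of restriction along the quasi-isomorphism $\tilde E\to E'$ and the extension--restriction adjunction along $E\to \tilde E$; this makes transparent why the result holds at the level of $\Ext$-modules rather than just supports. The cost is that you must chase $\S$-linearity through three steps, and your final paragraph handling this is a bit loose: the complex $\cC_{\tilde E}(-)$ naturally involves an extra operator $\chi_y$, so ``recovers $\cC_E(F)$ up to the acyclic $y$-direction'' needs unpacking. A cleaner justification is that the $\S$-action on each $\Ext$ in your chain is via composition with the classes of the $e_i$ in $\Ext_{(-)}(k,k)$, and these classes are visibly carried to one another under the maps $E\to \tilde E\to E'$. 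The paper's approach sidesteps this entirely because the explicit complexes coincide on the nose.
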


\begin{proof}
We may use the presentation $Q/(x)/(I+(x))$ for $R/(x)$ to compute $\V_{R/(x)}(M/xM)$. If $F\to M$ is a $Q$-free resolution with a dg $E$-module structure, then 
$F/xF\to M/xM$ is a $Q/(x)$-free resolution of $M/xM$, and it inherits a dg module structure over $\overline{E}=E/(x)E$. With this in hand, we notice that the complex $\widehat{\cC}_{E_a}(F)$ constructed above is identical with $\widehat{\cC}_{\overline{E}_a}(F/xF)$, so we are done by \Cref{prop_supp_Chat}.
\end{proof}

\

\section{The Taylor dg algebra}\label{s_taylor_tate}
Fix an ideal $I$ generated by $\f=f_1,\ldots,f_n$ in a unique factorization domain $Q$. 
When $Q$ is a polynomial ring over a field and $f_1, \ldots, f_n$ are monomials, there is a well-known construction of Taylor~\cite{Taylor} producing a dg algebra resolution for $Q/I$. We will use this construction in a more general setting.

\begin{notation}\label{signnotation}
Let $[n]$ denote the ordered set $\{ 1, \ldots, n \}$. All subsets $J = \{j_1, \ldots, j_s\} \subseteq [n]$ inherit an ordering $j_1 < \cdots < j_s$ from $[n]$.
The cardinality of $J$ is denoted by $|J|=s$. 
If $J$ and $L$ are disjoint subsets of $[n]$, we write
\vspace{-0.8em}
    \[
    {\sgn(J,L)}=(-1)^\varepsilon \quad \text{where}\quad \varepsilon = \left| \{(j,\ell) \,:\, j\in J,\ \ell\in L, \text{ and } j>\ell\}\right|.
    \]
When $J$ and $K$ are not disjoint, we set $\sgn(J,K) = 0$.
These signs satisfy 
    \[ \sgn(J,L)\sgn(L,J)=(-1)^{|J||L|} \vspace{-0.5em}\]
    and 
\[ \sgn(J,K)\sgn(J,L)=\sgn(J, K \cup L) \quad \text{for mutually disjoint } J, K, L \subseteq[n].\]
\end{notation}

\begin{construction}\label{taylor}
Let $Q$ be a UFD and let $f_1, \ldots, f_n \in Q$ be such that $I = (f_1, \ldots, f_n)$ is a proper ideal of $Q$.
We can choose a finite list of nonassociate irreducibles $x_1,\ldots, x_d$ so that each $f_j$ can be written as
\[
f_j = u_j x_1^{a_{j1}} \cdots x_d^{a_{jd}} \quad \text{ for some }a_{ji} \geqslant 0 \text{ and some } u_j \in Q^\times\,.
\]
Given a subset $J$ of $[n]$, set
\[
f_J\colonequals x_1^{a_{J1}}\cdots x_d^{a_{Jd}}\quad\text{where }a_{Ji}=\max\{a_{ji}:j\in J\}\,.
\]
In particular, $f_j=u_jf_{\{j\}}$. 

The \textbf{Taylor complex on $\f$} (with respect to $x_1,\ldots,x_d$) is a  free $Q$-complex $T=T(\f)$, defined as follows. As a free graded $Q$-module, $T^\natural =\bigwedge_Q (\shift Q^{\oplus n}),$ where we fix the basis in degree $i$ given by
\[
\{b_J: J\subseteq[n]\text{ with }|J|=i\}\,.
\]
The differential on $T$ is the $Q$-linear map determined by 
\[
\partial(b_J) = \sum_{i=1}^s (-1)^{i-1} \, \frac{f_J}{f_{J \smallsetminus \{ j_i \}}} \, b_{J \smallsetminus \{ j_i \}}\quad \text{with }J=\{j_1 < \cdots < j_s\}\,.
\]
We equip the Taylor complex with a $Q$-bilinear product, defined on basis elements by
\[
b_J\cdot b_{K} = {\sgn(J,K)} \,  \frac{f_J f_{K}}{f_{J\cup K}} \, b_{J\cup K}\,.
\]
Note that 
\[
b_J\cdot b_{K}=0 \quad \text{if } J \cap K \neq \varnothing.
\]
\end{construction}

\begin{remark}\label{remark product in taylor gcd}
Note that, up to a unit, ${f_J f_{K}}/{f_{J\cup K}}=\gcd(f_J, f_K)$.
\end{remark}

While Taylor's original construction is written for the case of monomials, the standard proof that $T$ is a dg algebra applies more generally \cite{Taylor}:

\begin{proposition}
The Taylor complex $T = T(\f)$ of \cref{taylor} is a dg $Q$-algebra with $\H_0(T)=Q/(\f)$.
\end{proposition}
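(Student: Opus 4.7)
The statement splits into two parts: $T$ is a dg $Q$-algebra, and $\H_0(T) = Q/(\f)$. The latter is immediate from the tail of $T$: with the convention $f_\varnothing = 1$ (the empty lcm), the differential $T_1 \to T_0$ sends $b_{\{j\}} \mapsto f_{\{j\}}$, and since $f_j = u_j f_{\{j\}}$ with $u_j \in Q^\times$, the image of $\partial_1$ equals $(f_1, \ldots, f_n) = I$. For the algebra structure, the plan is to verify in order: (a) $\partial^2 = 0$; (b) associativity and graded-commutativity of the product; (c) the Leibniz rule $\partial(xy) = \partial(x) y + (-1)^{|x|} x\, \partial(y)$. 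Two identities do most of the arithmetic on the module side throughout: the telescoping $\tfrac{f_A}{f_B}\cdot\tfrac{f_B}{f_C} = \tfrac{f_A}{f_C}$ for $A \supseteq B \supseteq C$ (all divisibilities hold in $Q$ since $a_{Ji}$ is monotone in $J$), and the symmetry $f_J f_K/f_{J \cup K} = \gcd(f_J, f_K)$ (up to units) from \cref{remark product in taylor gcd}.

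For (a), expanding $\partial^2(b_J)$ for $J = \{j_1 < \cdots < j_s\}$, the contribution to the coefficient of $b_{J \smallsetminus \{j_i, j_k\}}$ (with $i<k$) from first removing $j_i$ and then $j_k$ carries sign $(-1)^{i-1}(-1)^{k-2}$ (since $j_k$ sits at position $k-1$ in $J \smallsetminus \{j_i\}$) and coefficient $f_J/f_{J \smallsetminus \{j_i, j_k\}}$ by telescoping; the opposite order gives $(-1)^{k-1}(-1)^{i-1}$ times the same coefficient, and the two signs cancel. For (b), graded-commutativity is immediate from $\sgn(J,K)\sgn(K,J) = (-1)^{|J||K|}$. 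Associativity is vacuous when $J, K, L$ are not pairwise disjoint; when they are, both $(b_J b_K) b_L$ and $b_J (b_K b_L)$ reduce to $f_J f_K f_L/f_{J \cup K \cup L}\cdot b_{J \cup K \cup L}$ scaled by a sign, and the required identity $\sgn(J, K)\sgn(J \cup K, L) = \sgn(J, K \cup L)\sgn(K, L)$ follows by expanding both sides via $\sgn(A \cup B, C) = \sgn(A, C)\sgn(B, C)$.

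The main obstacle is (c), which is a careful sign-tracking exercise; the rest is essentially formal. Both sides of the Leibniz rule vanish unless $J, K$ are disjoint, so I would assume they are and compare coefficients of $b_{(J \cup K) \smallsetminus \{i\}}$ case by case on whether $i \in J$ or $i \in K$. If $i \in J$ sits at position $p$ in $J \cup K$ and position $q$ in $J$, then $\partial(b_J b_K)$ contributes $\sgn(J, K)(-1)^{p-1}\cdot f_J f_K/f_{(J \cup K) \smallsetminus \{i\}}$, while $\partial(b_J) b_K$ contributes $(-1)^{q-1}\sgn(J \smallsetminus \{i\}, K)\cdot f_J f_K/f_{(J \smallsetminus \{i\}) \cup K}$. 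The rational factors agree since $(J \smallsetminus \{i\}) \cup K = (J \cup K) \smallsetminus \{i\}$, and the signs match because the discrepancy $\sgn(J, K) = (-1)^{p-q}\sgn(J \smallsetminus \{i\}, K)$ is exactly recorded by the $p-q$ elements of $K$ lying below $i$. The symmetric case $i \in K$ is handled by the second summand $(-1)^{|J|} b_J\,\partial(b_K)$, with the prefactor $(-1)^{|J|}$ absorbing the shift incurred by differentiating past $b_J$.
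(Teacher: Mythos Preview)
Your treatment of $\partial^2=0$, associativity, graded-commutativity, and the disjoint case of the Leibniz rule is correct and matches the paper's approach. However, there is a genuine gap in your claim that ``both sides of the Leibniz rule vanish unless $J,K$ are disjoint.''

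When $|J\cap K|\geqslant 2$ this is indeed trivial: every summand $b_{J\smallsetminus\{j_i\}}\cdot b_K$ in $\partial(b_J)\cdot b_K$ vanishes because $J\smallsetminus\{j_i\}$ still meets $K$, and symmetrically for the other term. But when $J\cap K=\{j\}$ consists of a single element, the left side $\partial(b_J\cdot b_K)=\partial(0)=0$ is zero, while on the right side exactly one summand of $\partial(b_J)\cdot b_K$ survives (the one with $j_i=j$, since then $J\smallsetminus\{j\}$ and $K$ are disjoint), and likewise exactly one summand of $b_J\cdot\partial(b_K)$ survives. You then need to verify that
\[
\sgn(\{j\},J\smallsetminus\{j\})\,\frac{f_J}{f_{J\smallsetminus\{j\}}}\,b_{J\smallsetminus\{j\}}\cdot b_K \;+\; (-1)^{|J|}\,\sgn(\{j\},K\smallsetminus\{j\})\,\frac{f_K}{f_{K\smallsetminus\{j\}}}\,b_J\cdot b_{K\smallsetminus\{j\}} \;=\;0\,,
\]
which is a nontrivial sign computation (both terms are scalar multiples of $b_{(J\cup K)\smallsetminus\{j\}}$, and one must check the scalars cancel). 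The paper carries this out explicitly; your proposal simply asserts it. This case cannot be absorbed into your disjoint-case analysis, because there you compare coefficients of $b_{(J\cup K)\smallsetminus\{i\}}$ with $|J\cup K|-1$ elements, whereas here the target is $b_{(J\cup K)\smallsetminus\{j\}}$ with $|J|+|K|-2$ elements.
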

\begin{proof}
  First, we check $T$ is a complex. To this end, observe that given $J=\{j_1, \ldots, j_s\}$ with $j_1<\cdots<j_s$ we have 
    \begin{align*}\del^2(b_J)&=\sum_{i=1}^s(-1)^{i-1} \frac{f_J}{f_{J \smallsetminus \{ j_i \}}} \, \del(b_{J \smallsetminus \{ j_i \}})\\
    &=\sum_{i=1}^s(-1)^{i-1} \, \frac{f_J}{f_{J \smallsetminus \{ j_i \}}}\left(\sum_{1\les \ell<i} (-1)^{\ell-1} \, \frac{f_{J\smallsetminus \{j_i\}}}{f_{J \smallsetminus \{ j_i,j_\ell \}}} \, b_{J \smallsetminus \{ j_i,j_\ell \}} +\sum_{i< \ell\les s} (-1)^{\ell} \,\frac{f_{J\smallsetminus \{j_i\}}}{f_{J \smallsetminus \{ j_i,j_\ell \}}} \, b_{J \smallsetminus \{ j_i,j_\ell \}} \right)  \\
    &=\sum_{1\les \ell< i \les s}
    (-1)^{i+\ell} \, \frac{f_J}{f_{J\smallsetminus \{j_i\}}}
    \frac{f_{J\smallsetminus \{j_i\}}}{f_{J \smallsetminus \{ j_i,j_\ell \}}} \, b_{J \smallsetminus \{ j_i,j_\ell \}}+ 
    (-1)^{i+\ell-1} \, \frac{f_J}{f_{J\smallsetminus \{j_\ell\}}}\frac{f_{J\smallsetminus \{j_\ell\}}}{f_{J \smallsetminus \{ j_i,j_\ell \}}}\, b_{J \smallsetminus \{ j_i,j_\ell \}}\\
    &=0.
    \end{align*}
    The last equality follows from
    \vspace{-0.5em}
    \[
    \frac{f_J}{f_{J\smallsetminus \{j_i\}}}
    \frac{f_{J\smallsetminus \{j_i\}}}{f_{J \smallsetminus \{ j_i,j_\ell \}}}=\frac{f_J}{f_{J\smallsetminus \{j_\ell\}}}\frac{f_{J\smallsetminus \{j_\ell\}}}{f_{J \smallsetminus \{ j_i,j_\ell \}}}.
    \]
This shows that $T$ is a complex. 
Moreover, since $\del(b_{\{i\}})=f_i$ up to unit multiple, then $\H_0(T)=Q/(\f)$.
   
    To prove that the product is associative, by $Q$-linearity it suffices to check that
    \[
    (b_J\cdot b_{K})\cdot b_{L} =  b_J\cdot (b_{K}\cdot b_{L})
    \]
    for all subsets $J,K,L\subseteq [n]$. If $J$, $K$, and $L$ are not pairwise disjoint, both sides are zero by definition, and the equality follows, so we may assume that $J$, $K$, and $L$ are pairwise disjoint. The desired equality becomes
    \[
    {\sgn(J,K)\sgn(J\cup K,L)} \, \frac{f_J f_{K} f_{J\cup K} f_L }{f_{J\cup K} f_{{J\cup K\cup L}}} = {\sgn(J,K\cup L)\sgn(K,L)} \, \frac{f_J  f_{K\cup L} f_{K} f_L }{f_{{J\cup K\cup L}} f_{K\cup L} }\,.
    \]
    Note that this does indeed hold, as sign rules of \cref{signnotation} give
    $${\sgn(J,K)\sgn(J\cup K,L)} = {\sgn(J,K\cup L)\sgn(K,L)}$$
    and two fractions simplify to the same.

    It only remains to check that the Leibniz rule holds, and for this there are three cases to consider. If $J\cap K$ contains at least two elements then by definition
    \[
    \partial(b_J\cdot b_{K})=\partial(0)=0 \quad \text{and}\quad \partial(b_J)\cdot b_{K} + (-1)^{|J|} b_J\cdot \partial(b_{K})=0.
    \]
    If there is exactly one element $j\in J\cap K$ then $\partial(b_J\cdot b_{K})=0$, while
\begin{align*}
    \partial(b_J)\cdot b_{K} + (-1)^{|J|} b_J\cdot \partial(b_{K}) &= \sgn( \{ j \},J\smallsetminus \{ j \}) \frac{f_J}{f_{J\smallsetminus \{j\}}} b_{J \smallsetminus \{ j \}} \cdot b_{K} \\
    &\hspace{10mm} + (-1)^{|J|}\sgn(\{j\},K\smallsetminus \{ j \})\frac{f_K}{f_{K\smallsetminus \{j\}}} b_{J}\cdot b_{K \smallsetminus \{ j \}}\\
    &= \sgn( \{ j \},J\smallsetminus \{ j \})\sgn(J\smallsetminus \{j\},K)\frac{f_J f_{J \smallsetminus \{ j \}}f_{K}}{f_{J \smallsetminus \{ j \}}f_{J\cup K\smallsetminus \{j\}}}  b_{J\cup K\smallsetminus \{j\}}\\
    &\hspace{10mm} +(-1)^{|J|}\sgn(\{j\},K\smallsetminus \{ j \})\sgn(J,K\smallsetminus \{j\})\frac{f_J f_{K}f_{K \smallsetminus \{ j \}}}{f_{J\cup K\smallsetminus \{j\}}f_{K \smallsetminus \{ j \}}}   b_{J\cup K\smallsetminus \{j\}}\\
    &= (-1)^{|J\smallsetminus\{j\}|}\sgn(J\smallsetminus \{j\},K\smallsetminus \{j\})\frac{f_J f_{J \smallsetminus \{ j \}}f_{K}}{f_{J \smallsetminus \{ j \}}f_{J\cup K\smallsetminus \{j\}}}  b_{J\cup K\smallsetminus \{j\}}\\
    &\hspace{10mm} +(-1)^{|J|}\sgn(J\smallsetminus\{j\},K\smallsetminus \{ j \})\frac{f_J f_{K}f_{K \smallsetminus \{ j \}}}{f_{J\cup K\smallsetminus \{j\}}f_{K \smallsetminus \{ j \}}}   b_{J\cup K\smallsetminus \{j\}}\\
    &=0\,.
\end{align*}
Here we have combined signs using the rules from \cref{signnotation}. 
Finally, we verify the Leibniz rule when $J\cap K=\varnothing$. In this case
\begin{align*}
    \partial(b_J\cdot b_{K}) &= {\sgn(J,K)} \partial\left(\frac{f_J f_{K}}{f_{J\cup K}}  \, b_{J\cup K}\right)\\
    &= {\sgn(J,K)} \sum_{j\in J\cup K} \sgn(\{j\},J\cup K) \frac{f_{J\cup K}f_J f_{K}}{f_{J \cup K\smallsetminus \{ j \}}f_{J\cup K}} \, b_{J\cup K\smallsetminus\{j\}}\\
    &= {\sgn(J\smallsetminus\{j\},K)} \sum_{j\in J} \sgn(\{j\},J) \frac{f_{J\cup K}f_J f_{K}}{f_{J \cup K\smallsetminus \{ j \}}f_{J\cup K}} \, b_{J\cup K\smallsetminus\{j\}}\\
    &\hspace{15mm}  + {\sgn(J,K)} \sum_{j\in K} \sgn(\{j\},K) (-1)^{|J|}\frac{f_{J\cup K}f_J f_{K}}{f_{J \cup K\smallsetminus \{ j \}}f_{J\cup K}} \, b_{J\cup K\smallsetminus\{j\}}\\
    &= \partial(b_J)\cdot b_{K} + (-1)^{|J|} b_J\cdot \partial(b_{K})\,,
\end{align*}
again making use of the sign tricks from \cref{signnotation}.
\end{proof}

\begin{example}\label{e_taylor_2}
When $n=2$, then $T = T(\f)$ can be taken to have the form
\[
    0 \to
    Q\xra{\begin{pmatrix} -f_2'\\f_1' \end{pmatrix}}
    Q^2 \xra{\begin{pmatrix} f_1 & f_2\end{pmatrix}}
    Q \to
    0
  \]
  where $f_i=df_i'$ for $d$ a greatest common divisor of $f_1,f_2$; cf.\@ \cref{e_homotopies}. In particular, note that $T$ is the minimal free resolution of $R.$ The dg algebra structure describe in \cref{taylor}, is the same as the one viewing the complex above as a Hilbert--Burch resolution (see, for instance, \cite[Example~2.1.2]{Avramov:2010}). 
  \end{example}
 
\begin{remark}
\label{taylor_resolution}
    When $\f$ is a $Q$-regular sequence, the Taylor complex $T$ on $\f$ is the Koszul complex and hence a free resolution of $Q/(\f)$ over $Q$. Moreover, whenever $\f$ is a list of monomials in a regular sequence, the complex $T$ is a free resolution of $Q/(\f)$ over $Q$; this is a well-known result of Taylor \cite{Taylor}. Note however that $T$ can be a resolution even when $\f$ is not of this form; see \cref{e_taylor_2} below. There are however many examples where $\H_i(T)\neq 0$ for some $i>0.$ A comprehensive study of the structure of $T$, at this level of generality, is reserved for another occasion.
\end{remark}

Below is an explicit example when $I$ is a monomial ideal.

\begin{example}
    Let $Q=k[x,y,z]$ and $I=(yz,xz,xy)$, then the Taylor complex for $I$ is the Taylor resolution:
	$$T\!: \, \xymatrix@C=5mm{0 \ar[r] & Q b_{123} \ar[rr]^-{\begin{pmatrix} 1 \\ -1 \\ 1 \end{pmatrix}} && Qb_{23} \oplus Q b_{13} \oplus Q b_{12} \ar[rrrr]^-{\begin{pmatrix}  0 & -x & -x \\ -y & 0 & y \\ z & z & 0 \end{pmatrix}} &&&& Q b_1 \oplus Q b_2 \oplus Q b_3 \ar[rrr]^-{\begin{pmatrix} yz & xz & xy \end{pmatrix}} &&& Q \ar[r] & 0.}$$
    Left multiplication by $b_1$ on $T$ is given by the following: 
    $$\xymatrix@C=5mm{
0 \ar[r] & Q b_{123} \ar[r] & Q b_{23} \oplus Q b_{13} \oplus Q b_{12} \ar[dd]^-{\begin{pmatrix} yz \\ 0 \\ 0 \end{pmatrix}} \ar[r] 
& Q b_1 \oplus Q b_2 \oplus Q b_3 \ar[r] \ar[dd]^-{\begin{pmatrix} 0 & 0 & 0 \\ 0 & 0 & y \\ 0 & z & 0 \end{pmatrix}} 
& Q \ar[dd]^-{\begin{pmatrix} 1 \\ 0 \\ 0 \end{pmatrix}} \ar[r] & 0 \\ \\ 
& 0 \ar[r] & Q b_{123} \ar[r] & Q b_{23} \oplus Q b_{13} \oplus Q b_{12} \ar[r] & Q b_1 \oplus Q b_2 \oplus Q b_3 \ar[r] & Q \ar[r] & 0.
}$$
So for example, $b_1 \cdot b_2 = z b_{12}$.
\end{example}

We will now adjoin variables (polynomial in even degrees and exterior in odd degrees) to the Taylor complex to construct a dg algebra resolution of $R$ over $Q$ \cite{Tate:1957,Avramov:2010}. In general, this resolution will be quite large, but as we will see in the next section, this resolution will be helpful for our purposes.

\begin{construction}[Taylor model]\label{taylor-tate}
    Let $Q$ is a commutative base ring that is either local or a nonnegatively graded algebra with $Q_0$ a field. Let $\m$ denote the unique maximal ideal of $Q$, in the local case, or the irrelevant maximal ideal of $Q$ in the graded setting. In either case, $k=Q/\m$ is the residue field of $Q$. 
Let $I$ be a proper ideal in $Q$, minimally generated by $f_1, \ldots, f_n$, and $R = Q/I$.

Fix a Taylor complex $T$ for $R$ over $Q$. A {\bf Taylor model} for $R$ over $Q$ is the complex $T[X]$ obtained from $T$ by adding polynomial variables in even degrees $i \geqslant 2$ and exterior variables in odd degrees $i \geqslant 3$ in the sense of Tate \cite{Tate:1957} to form a dg algebra resolution for $R$ over $Q$.
In more detail, $X = X_{\geqslant 2} = \bigcup_{i \geqslant 2} X_i$ is a set of variables, with $X_i$ a set of variables of degree $i$. For each $i \geqslant 1$, we fix cycles $z_1, \ldots, z_m$ whose homology classes minimally generate $H_i(T[X_{\leqslant i}])$, and take $X_{i+1} = \{ x_1, \ldots, x_m \}$, then we set
$$T[X_{\leqslant i+1}] = T[X_{\les i}][x_1, \ldots, x_m \mid \partial(x_j) = z_j].$$
Now define $T[X] \colonequals \operatorname{colim} T[X_{\les i}];$ in particular, Taylor models exist.
\end{construction}

\vspace{0.5em}

\section{Embedded deformations, support varieties, and the homotopy Lie algebra}\label{s_deformations}

Throughout, let $R$ be a local or positively graded ring with residue field $k$ and with a regular presentation $Q/I$. We write $\m$ for the (homogeneous) maximal ideal of $Q$, and we assume that $I\subseteq \m^2$.
We begin with some notation that will allow us to connect embedded deformations to elements of the homotopy Lie algebra, as well as to hyperplanes in the spectrum of the ring of cohomology operators.

We say that $R$ has an {\bf embedded deformation of codimension} $\mathbf{c}$ determined by $J$ and $\f$ if there exists an ideal $J \subseteq I$ such that, in the ring $S=Q/J$, the ideal $I/J$ is generated by a regular sequence $\f=f_1, \ldots, f_c$ of length $c$:
\vspace{-0.5em}
\[
I = J + (\f) \quad \text{and }\quad \f \text{ is regular on } Q/J.
\]
An embedded deformation of codimension $1$ determines a unique element $z_J$ in the $k$-vector space $ \Hom_Q(I,k)$ with 
\[
z_J(J)=0 \quad \text{and} \quad z_J(f)=1.
\]
Note that up to a scalar, $z_J$ depends only on $J$. By \Cref{c_identification} the homotopy Lie algebra in degree two is dual to $I/\m I$, and likewise $\S = k[\chi_1,\ldots,\chi_n]$ is the symmetric algebra on the dual of $I/\m I$, so there are canonical isomorphisms
\[
\pi^2(R) \xra{\ \cong\ } \Hom_Q(I,k)\xra{\ \cong\ } \S^{2}.
\]
Making this identification, the fact that the quotient map $S {\relbar\joinrel\twoheadrightarrow}R$ is complete intersection implies that $z_J$ is a central element in $\pi(R)$ \cite{Avramov:1989a}. In a similar vein, when $J$ and $f$ determine an embedded deformation, the map $z_J\colon I\to k$ lifts to a map $I\to R$. There must then be a section $R\to I/I^2$ generating a free summand of the conormal module $I/I^2$. As a partial converse to this, Iyengar has proven that free $R$-module summands of $I/I^2$ give rise to central elements of $\pi(R)$ \cite{Iyengar:2001}.  

A support-theoretic obstruction to the existence of embedded deformations was given in \cite[Proposition 6.3.7]{Pollitz:2021}: if $R$ has an embedded deformation given by $J$ and $f$, then $\V_R(R)$ is contained in a hyperplane: $\V_R(R) \subseteq \ker (z_J)\subseteq I/\m I$. In \cite[Theorem 3.1]{BEJ2}, we showed that radical (and in particular, central) elements of $\pi^2(R)$ also give rise to hyperplanes containing $\V_R(R)$. This gives us the following:

\begin{theorem}\label{th_radical_support}
Let $R$ be either a noetherian local ring or a positively graded finitely generated algebra over a field. Consider the following conditions:
\vspace{-0.2em}
\begin{enumerate}
    \item The ring $R$ has an embedded deformation of codimension $c$.
    \item The space of central elements of degree $2$ in $\pi(R)$ has rank at least $c$.
    \item The space of radical elements of degree $2$ in $\pi(R)$ has rank at least $c$.
    \item The cohomological support variety of $R$ is contained in a linear subspace of codimension $c$.
\end{enumerate}
The implications $(1) \Rightarrow (2) \Rightarrow (3) \Rightarrow (4)$ always hold.
\end{theorem}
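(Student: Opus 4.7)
The plan is to establish each of the three implications in turn, drawing on results already recorded in the paper.

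For $(1)\Rightarrow(2)$: suppose $R$ admits an embedded deformation of codimension $c$ via $J\subseteq I$ and a regular sequence $\f=f_1,\ldots,f_c$ in $Q/J$ with $I=J+(\f)$. For each $i\in\{1,\ldots,c\}$, set
\[
J_i\coloneqq J+(f_1,\ldots,\widehat{f_i},\ldots,f_c),
\]
so that $R=(Q/J_i)/(f_i)$ exhibits $R$ as a codimension-one embedded deformation of $Q/J_i$. By the construction recalled immediately before the theorem, this yields a functional $z_i\coloneqq z_{J_i}\in\Hom_Q(I,k)\cong\pi^2(R)$ with $z_i(J_i)=0$ and $z_i(f_i)=1$; in particular $z_i(f_j)=\delta_{ij}$, since $f_j\in J_i$ whenever $j\neq i$. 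These relations force $z_1,\ldots,z_c$ to be $k$-linearly independent, and each $z_i$ is central in $\pi(R)$ by \cite{Avramov:1989a}. Hence the space of central degree-two elements has rank at least $c$.

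For $(2)\Rightarrow(3)$: this is immediate, since $\ad(z)=0$ trivially implies $\ad(z)^1=0$, so every central element is radical. For $(3)\Rightarrow(4)$: this is a direct invocation of \cite[Theorem~3.1]{BEJ2}, quoted in the introduction, which asserts precisely that a $c$-dimensional subspace of $\pi^2(R)$ consisting of radical elements forces $\V_R(R)$ to lie inside a linear subspace of $\mathbb{A}_k^n$ of codimension $c$.

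The main subtlety sits in the centrality assertion of $(1)\Rightarrow(2)$: once the identification $\pi^2(R)\cong\Hom_Q(I,k)\cong\S^2$ is fixed and the functionals $z_i$ are constructed, one must verify that each $z_i$ actually lies in the center of the whole graded Lie algebra $\pi(R)$, not merely in $\pi^2(R)$. This is either Avramov's original Lie-algebraic input, namely that the complete-intersection surjection $Q/J_i\twoheadrightarrow R$ induces a Lie algebra map $\pi(R)\to\pi(Q/J_i)$ whose degree-two kernel is central, or equivalently Iyengar's statement \cite{Iyengar:2001} that free $R$-summands of the conormal module $I/I^2$ produce central elements of $\pi(R)$. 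Both routes are made available by the discussion preceding the theorem, so once the bookkeeping above is in place, the proof is complete.
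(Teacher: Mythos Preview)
Your proof is correct and follows essentially the same approach as the paper's: for $(1)\Rightarrow(2)$ the paper routes through the conormal module (an embedded deformation of codimension $c$ gives a free summand of $I/I^2$ of rank $c$, then invokes Iyengar \cite{Iyengar:2001}), while you construct the $c$ central functionals $z_i$ directly and verify their linear independence---but you explicitly note both routes, and the remaining implications $(2)\Rightarrow(3)$ and $(3)\Rightarrow(4)$ are handled identically via the definition and \cite[Theorem~3.1]{BEJ2}.
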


\begin{proof}  
If $R$ has an embedded deformation of codimension $c$, then $I/I^2$ has a free summand of rank $c$. By \cite[Main Theorem]{Iyengar:2001}, $\rank_k \zeta^2(R)\geqslant c$. This shows $(1)$ implies $(2)$.
By definition, every central element is radical, and thus $(2)$ implies $(3)$ is obvious. We proved $(3) \Rightarrow (4)$ in \cite[Theorem 3.1]{BEJ2}.
\end{proof}

\begin{remark}
The implication $(3) \Rightarrow (4)$ comes from a direct correspondence between radical elements of degree $2$ and linear subspaces containing $\V_R(R)$: under the isomorphism $\pi^2(R) \xra{\ \cong\ } \S^{2}$, radical elements of $\pi^2(R)$ are sent to linear polynomials vanishing on the subvariety $\V_R(R)\subseteq \spec \S$  \cite[Theorem 3.1]{BEJ2}. In other words, there is a canonical embedding
\[
\rad^2(\pi^*(R)) \hookrightarrow \left\{ \chi\in\S^{2} \mid \V_R(R)\subseteq \mathcal{V}(\chi) \right\}\,.
\]
In particular, if $\pi(R)$ contains a radical element then $\V_R(R)$ is contained in a corresponding hyperplane.
\end{remark}

In \cite[Problem 4.3]{Avramov:1989a}, Avramov asked whether the implication $(2) \Rightarrow (1)$ holds. 
As noted in the introduction, in future work with Mark E.~Walker we will present an example that shows $(2)\Rightarrow (1)$ need \textbf{not} hold in general.
Nevertheless, if the defining ideal of $R$ is generated by monomials on a regular sequence, then all of the conditions in \cref{th_radical_support} are equivalent; see \cref{monomial_equivalence} below. First, we need the following key lemma, which holds without the monomial assumption.

\begin{lemma}\label{key lemma}
    Let $Q/I$ be a minimal regular presentation of a local ring $R$, with $I$ minimally generated by elements $f_1, \ldots, f_n$ of $(Q,\m, k)$. The following hold:

    \begin{enumerate}
    \item If $\gcd(f_i,f_j)$ is not a unit for some $i\neq j$, then 
    \[
    \mathbf{e}_i ,\mathbf{e}_j\in \V_R(R),
    \]
    where $\mathbf{e}_1,\ldots, \mathbf{e}_n$ are the basis elements for $I/\m I$ determined by the images of $f_1, \ldots, f_n$. 
    \item If for all $i$ there exists $g \in I \smallsetminus (f_i, \m I)$ such that $\gcd(f_i, g)$ is not a unit, then 
\[
\mathrm{span}_k (\V_R(R)) = \mathbb{A}_k^n\,.
\]
    \item\label{keylemitem3} If there is an $i$ such that $\gcd(f_i,f_j)$ is not a unit for all $j$, then 
    \[
    \V_R(R)=\mathbb{A}^n_k.
    \]
    \end{enumerate}
\end{lemma}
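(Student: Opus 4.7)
Plan. Each assertion concerns the cohomological support $\V_R(R)$ and reduces, by \cref{c_support_using_hypersurfaces}, to showing that $\pdim_{\tilde Q/(g)}(\tilde R)=\infty$ for the parameter $g$ associated to the relevant point of $\mathbb{A}^n_K$. The key technical tool is the embedded deformation obstruction \cite[Proposition~6.3.7]{Pollitz:2021}: if $R$ admits an embedded deformation with parameter $g$ (i.e., $\tilde I=J+(g)$ with $g$ regular on $\tilde Q/J$), then $\V_R(R)$ is contained in the hyperplane of $\mathbb{A}^n_K$ dual to $g$. The plan is to contradict the existence of such a deformation using the gcd hypothesis, and so to conclude that the points in question lie in $\V_R(R)$.

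\textit{Part (1).} We show $\mathbf{e}_i\in\V_R(R)$, equivalently $\pdim_{\tilde Q/(f_i)}(\tilde R)=\infty$. Suppose the contrary: finite projective dimension of $\tilde R$ over the hypersurface $\tilde Q/(f_i)$ yields, via standard results of Avramov on embedded deformations, a decomposition $I=J+(f_i)$ with $f_i$ regular on $\tilde Q/J$. Writing $d=\gcd(f_i,f_j)\in\m$, $f_i=du$, $f_j=dv$ with $\gcd(u,v)=1$, the regularity of $du=f_i$ on $\tilde Q/J$ forces $d$ regular there. From $f_j\in J+(f_i)$ we write $f_j=bf_i+q$ with $q\in J$; then $d(v-bu)=q\in J$, so regularity of $d$ gives $v-bu\in J$, whence $f_j=bf_i+dj'$ with $j'\in J$. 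If $b$ is a unit, then $\bar f_j=b\bar f_i$ in $I/\m I$, contradicting the linear independence of $\bar f_i,\bar f_j$; if $b\in\m$ then both $bf_i$ and $dj'$ lie in $\m I$, so $f_j\in\m I$, contradicting that $f_j$ is a minimal generator. The symmetric argument handles $\mathbf{e}_j$.

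\textit{Part (2).} For each $i$, choose $g\in I\setminus(f_i,\m I)$ with $\gcd(f_i,g)$ a non-unit. Since $\bar g$ is not a scalar multiple of $\bar f_i$ in $I/\m I$, we may replace some $f_\ell$ (with $\ell\neq i$) by $g$ to form a new minimal generating set of $I$ that still contains $f_i$. Applying part (1) in this new set to the pair $(f_i,g)$, the axis through $f_i$ — which is the same point $\mathbf{e}_i\in\mathbb{A}^n_k$ in both coordinate systems, since $f_i$ is a generator of either set — lies in $\V_R(R)$. Iterating over $i$ yields $\mathbf{e}_1,\dots,\mathbf{e}_n\in\V_R(R)$, and these span $\mathbb{A}^n_k$.

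\textit{Part (3).} Let $a\in\mathbb{A}^n_K\setminus\{0\}$; the axial case reduces to (1). Assume for contradiction that $R$ admits an embedded deformation with parameter $g_a=\sum a_m f_m$, i.e., $\tilde I=J+(g_a)$ with $g_a$ regular on $\tilde Q/J$. Writing each $f_m=q_m+\beta_m g_a$ with $q_m\in J$ and substituting into $g_a=\sum a_m f_m$ yields $(1-\sum a_m\beta_m)g_a\in J$, so regularity of $g_a$ forces $\sum a_m\beta_m\equiv 1\pmod{\m}$. Hence some index $m_0$ has $a_{m_0}\neq 0$ and $\beta_{m_0}$ a unit, and the relation $f_{m_0}=q_{m_0}+\beta_{m_0}g_a$ rearranges to show that $\tilde I=J+(f_{m_0})$ with $f_{m_0}$ regular on $\tilde Q/J$ — that is, $R$ in fact has an embedded deformation with minimal-generator parameter $f_{m_0}$. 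By \cite[Proposition~6.3.7]{Pollitz:2021} this forces $\mathbf{e}_{m_0}\notin\V_R(R)$; but by part (1), using $\gcd(f_i,f_{m_0})\neq 1$ (if $m_0\neq i$) or $\gcd(f_i,f_j)\neq 1$ for any $j\neq i$ (if $m_0=i$), we have $\mathbf{e}_{m_0}\in\V_R(R)$ (resp.\ $\mathbf{e}_i\in\V_R(R)$), a contradiction. The main subtlety is this ``purification'' step in (3): passing from a deformation with the mixed parameter $g_a$ to one with a minimal-generator parameter, after which the gcd-contradiction of (1) cleanly closes the argument.
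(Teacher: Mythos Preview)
Your argument has a fundamental gap in parts (1) and (3). You assume that if $\mathbf{e}_i\notin\V_R(R)$ (equivalently $\pdim_{\tilde Q/(f_i)}\tilde R<\infty$), then $R$ admits an embedded deformation $I=J+(f_i)$ with $f_i$ regular on $\tilde Q/J$; and similarly for a general point $a$ in part (3). This is not a ``standard result of Avramov''---it is the \emph{converse} of \cite[Proposition~6.3.7]{Pollitz:2021}, and a coordinate-free version of it is exactly the implication $(4)\Rightarrow(1)$ of \cref{th_radical_support} that the paper is labouring to establish in the monomial case via this very lemma and \cref{main thm monomial}. Indeed, the paper announces a forthcoming counterexample (with Walker) to \cref{Lucho_question}; in that example $\V_R(R)$ is contained in a hyperplane, so after a change of generators some $\mathbf{e}_i\notin\V_R(R)$, yet there is no embedded deformation at all. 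So the implication you invoke is false in general, and assuming it here is circular.

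The paper proves the lemma by a direct computation in the $2$-periodic complex $\widehat{\cC}_{E_a}(T[X])$ built from a Taylor model. For (1), the hypothesis $\gcd(f_i,f_j)\in\m$ forces $b_i\cdot b_j\equiv 0$ in $T[X]\otimes_Q k$, so $b_j$ is a cycle at $a=\mathbf{e}_i$; it is not a boundary because the image in degree $1$ is exactly $k\cdot b_i$ (minimality of the generators guarantees $\partial(T[X]_2)\subseteq\m T_1$). For (3), the same product computation shows $b_i$ is a cycle for \emph{every} $a$, and a non-boundary whenever $a$ is not a scalar multiple of $\mathbf{e}_i$; Zariski closure over an infinite field finishes. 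Your part (2) matches the paper's, but it rests on (1). The gcd manipulation you carry out---showing that an embedded deformation with parameter $f_i$ forces $f_j\in\m I$ when $\gcd(f_i,f_j)\in\m$---is correct, but it only re-proves the easy direction (embedded deformation $\Rightarrow$ $\mathbf{e}_i\notin\V_R(R)$ is impossible), not the statement you need.
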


\begin{proof}
Let $T[X]=T[X_{\geqslant 2}]$ denote a Taylor model for $R = Q/I$ over $Q$ on $f_1, \ldots, f_n$ from  \cref{taylor-tate}; recall that $T[X]_1=T_1$ has a $Q$-linear basis $\{b_1,\ldots, b_n\}$ with $\del(b_\ell)=f_\ell$.

First, we prove (1). Since the roles of $f_i$ and $f_j$ are interchangeable, it suffices to prove that $\mathbf{e}_i \in \V_R(R)$. 
Let $C = \widehat{\cC}_{E_{\mathbf{e}_i}}(T[X])$ as in \Cref{c_2_periodic}, write $d$ for its differential, and by a slight abuse of notation write $b_\ell$ for its image in $C$. 
In the notation above, the goal is to show that $C$ has nonzero homology. In particular, we will prove that $b_j$ is a cycle but not a boundary in $C$.

First, we check that $b_j$ is a cycle in $C$. By the definition of the differential $d = d_{\mathbf{e}_i}$ on $C$ in \cref{c_2_periodic} is given by
$$d(b_j) = \partial(b_j) + \sum_{\ell=1}^n \left( {\mathbf{e}_i} \right)_\ell b_\ell\cdot b_j = b_i \cdot b_j.$$
Recall from \cref{taylor} that
\vspace{-0.7em}
\[
b_i \cdot b_{j} = \sgn(i,j) \gcd(f_i, f_j) b_{ \{ i, j \} },
\]
but we assumed that $\gcd(f_j, f_i)$ is not a unit, so $b_i\cdot b_j = 0$ in $C$.

We now show that $b_j$ is not a boundary. First, since $f_1, \ldots, f_n$ form a minimal generating set for $I$, any cycle of degree $1$ in $T[X]$ corresponds to a minimal syzygy of $I$ and hence $\partial(T[X]_2) \subseteq \m T_1$. Thus
$$(\operatorname{im }d)_1 = k \cdot \left( \sum_{\ell = 1}^n \left( {\mathbf{e}_i} \right)_\ell b_\ell b_{\varnothing} \right) = k \cdot b_i b_{\varnothing} = k \cdot b_i.$$
In particular, $b_j$ is not a boundary. We conclude that $C$ is not exact, and thus by \cref{prop_supp_Chat} we have $\mathbf{e}_i \in \V_R(R)$. This proves (1).

Now we note that (2) is a consequence of (1). Indeed, by assumption, for a fixed $i$ one replace $f_j$ with $g$ for some $j\neq i$ and the set $f_1,\ldots,g,\ldots f_n$ is a minimal generating set for $I$. This new generating set satisfies the hypothesis of \Cref{key lemma}, so $\mathbf{e}_i \in \V_R(R)$. Note that the change of coordinates on $\mathbb{A}_k^n$ determined by the change of generating sets above fixes $\mathbf{e}_i$. 
Thus
\[
\mathrm{span}_k \V_R(R) \supseteq \mathrm{span}_k \{ \mathbf{e}_1, \ldots, \mathbf{e}_n \} = \mathbb{A}_k^n\,.
\]

Finally, we show (3) holds. Fix $a \in \A^n_k$ and set $C = \widehat{\cC}_{E_{a}}(T[X])$. As in the proof of (1), the hypothesis implies that $b_i$ is a cycle in $C$. Furthermore,
\[
(\operatorname{im }d_a)_1 = k \cdot  \left(\sum_{j = 1}^n a_j b_j\right)\cdot b_{\varnothing}  = k \cdot \sum_{j=1}^na_jb_j\,,
\]
and so whenever $a \neq \lambda\mathbf{e}_i$ for some nonzero $\lambda\in k$, it follows that $b_i$ is not a boundary in $C$. Thus,
\[
\{
a \in \mathbb{A}_k^n:a = \lambda\mathbf{e}_i\text{ for some }\lambda\neq 0\}\subseteq \V_R(R).
\]
By \cref{c_support_using_hypersurfaces}, we may assume that $k$ is infinite. Since $\V_R(R)$ is a Zariski closed subset of $\mathbb{A}_k^n$, it follows that $\V_R(R)=\mathbb{A}_k^n$, as claimed.
\end{proof}

\begin{example}
    Consider the ideal $I = (x^2, xy, yz, zw, w^2)$ in $Q = k \llbracket x,y,z,w \rrbracket$ from \Cref{lucho_example}, which satisfies the hypothesis of (2) but not (3) in \Cref{key lemma}. As we noted in \Cref{lucho_example}, the support of $R = Q/I$ is a proper subvariety whose $k$-linear span is all of  $\A^5_k$. 
\end{example}

\begin{theorem}\label{main thm monomial}
Let $R$ be either a local or positively graded ring with residue field $k$, with a regular presentation $Q/I$. Write $\m$ for the (homogeneous) maximal ideal of $Q$, and assume that $I\subseteq \m^2$ is minimally generated by $f_1, \ldots, f_n$ which are monomials in some regular sequence of $Q$. If $\V_R(R)$ is contained in a linear subspace of codimension $c$, then $R$ has an embedded deformation of codimension $c$. 

More precisely, if the images of $f_{i_1},\ldots,f_{i_c}$ in $I/\m I$ are not in $\V_R(R)$, then $R$ has an embedded deformation given by $J=(f_j: j\neq i_\ell$ for some $1\leqslant \ell \leqslant c)$ and $f_{i_1},\ldots,f_{i_c}$.
\end{theorem}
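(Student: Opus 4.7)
The plan is to first establish the \emph{more precisely} clause by direct construction, and then deduce the main statement by showing that we can always find $c$ generators whose coordinate directions lie outside $\V_R(R)$.

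For the \emph{more precisely} clause, I would begin by invoking \cref{key lemma}\,(1): its contrapositive, applied to the assumption that $\mathbf{e}_{i_\ell}\notin \V_R(R)$, gives that $\gcd(f_{i_\ell},f_j)$ is a unit for every $\ell$ and every $j\neq i_\ell$. Since the $f_i$ are monomials in a regular sequence $x_1,\ldots, x_d$ of $Q$, this is equivalent to $\supp(f_{i_\ell})\cap \supp(f_j)=\varnothing$. In particular the supports of the $f_{i_\ell}$ are pairwise disjoint and disjoint from the supports of the remaining generators of $I$. Taking $J=(f_j : j\neq i_\ell \text{ for all }\ell)$, one has $I=J+(f_{i_1},\ldots,f_{i_c})$ tautologically, so the construction is complete once we verify that $f_{i_1},\ldots,f_{i_c}$ is a regular sequence on $Q/J$. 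This is a consequence of the disjoint-support structure: the variables in $\bigcup_\ell \supp(f_{i_\ell})$ are part of a regular sequence of $Q$ and do not appear in any generator of $J$, so they remain a regular sequence on $Q/J$; hence the monomials $f_{i_\ell}$, living in the pairwise-disjoint subsets $\supp(f_{i_\ell})$ of this regular sequence, form a regular sequence on $Q/J$.

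To deduce the main statement, let $S=\{i\in \{1,\ldots,n\} : \mathbf{e}_i\notin \V_R(R)\}$. By the contrapositive of \cref{key lemma}\,(1), $S$ is exactly the set of indices $i$ with $\supp(f_i)$ disjoint from $\supp(f_j)$ for every $j\neq i$. For each such $i$, the ideals $(f_i)$ and $(f_j: j\neq i)$ are generated by monomials in disjoint parts of the regular sequence, so they are Tor-independent over $Q$ (equivalently, $f_i$ is a nonzerodivisor on $Q/(f_j: j\neq i)$). Applying \cref{tor independent gives union of varieties}, together with $\V_{Q/(f_i)}(Q/(f_i))=\{\bm{0}\}$ from \cref{ex_characterization} (since $Q/(f_i)$ is a hypersurface), yields $\V_R(R)\subseteq \{a\in \A_k^n : a_i=0\}$; intersecting over $i\in S$ gives $\V_R(R)\subseteq \{a : a_i=0 \text{ for all } i\in S\}$. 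On the other hand, for every $j\notin S$ we have $\mathbf{e}_j\in \V_R(R)$ by definition, so $\mathrm{span}_k \V_R(R)$ contains $\mathrm{span}_k\{\mathbf{e}_j : j\notin S\}$, which is this same coordinate subspace. Thus $\mathrm{span}_k \V_R(R)$ has codimension exactly $|S|$, and the hypothesis $\V_R(R)\subseteq H$ with $\codim H=c$ forces $|S|\geqslant c$. Choosing any $i_1,\ldots,i_c\in S$ then reduces the main statement to the \emph{more precisely} clause already proven.

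The main obstacle is largely bookkeeping: converting the homological condition $\mathbf{e}_i\notin \V_R(R)$ into the combinatorial condition that $\supp(f_i)$ is disjoint from the supports of the other generators, and then verifying that this disjointness really does produce a regular sequence on $Q/J$ in both the local and graded settings. With \cref{key lemma} and \cref{tor independent gives union of varieties} in hand, no step of the argument is especially subtle.
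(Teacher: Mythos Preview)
Your argument is correct and follows the same essential approach as the paper: use the contrapositive of \cref{key lemma}(1) to convert ``$\mathbf{e}_{i_\ell}\notin\V_R(R)$'' into the combinatorial condition that $\gcd(f_{i_\ell},f_j)$ is a unit for all $j\neq i_\ell$, and then argue that this coprimality forces $f_{i_1},\ldots,f_{i_c}$ to be regular on $Q/J$.

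The differences are minor. For the regularity step, the paper argues via an explicit tensor factorisation of the Taylor complex $T(\f)\cong T(f_{i_1},\ldots,f_{i_c})\otimes_Q T(f_j:j\notin\{i_1,\ldots,i_c\})$, identifying the first factor with a Koszul complex and invoking acyclicity of $T(\f)$. Your direct argument (``variables not appearing in the generators of $J$ remain regular on $Q/J$, hence so do pairwise-coprime monomials in them'') is correct but informal; the paper in fact remarks that one can bypass the Taylor argument using \cite[Lemma~3]{Kaplansky:1962}, which is closer in spirit to what you wrote. For the reduction from the first sentence of the theorem to the second, the paper simply observes that since $\V_R(R)\subseteq H$ and $H$ is linear of codimension $c$, at most $n-c$ of the basis vectors $\mathbf{e}_i$ can lie in $H$ (hence in $\V_R(R)$). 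Your route---computing $\operatorname{span}_k\V_R(R)$ exactly as a coordinate subspace of codimension $|S|$ via \cref{tor independent gives union of varieties}---is more than needed: the lower bound $\operatorname{span}_k\{\mathbf{e}_j:j\notin S\}\subseteq \operatorname{span}_k\V_R(R)\subseteq H$ already gives $|S|\geqslant c$, so the Tor-independence input is unnecessary here (though it does recover the additional information that $\operatorname{span}_k\V_R(R)$ is a coordinate subspace).
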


\begin{proof}
Let $\mathbf{e}_1,\ldots, \mathbf{e}_n$ be the basis for $I/\m I$ determined by the images of $f_1, \ldots, f_n$. 
If $\V_R(R)$ is contained in a hyperplane of codimension $c$, then any basis of $I/\m I$ has $c$ many elements outside of $\V_R(R)$; in particular, up to reordering, $\mathbf{e}_1,\ldots, \mathbf{e}_c$ do not belong to $\V_R(R)$.

By \Cref{key lemma}, $\gcd(f_i, f_j)$ is a unit for all $1 \leqslant i \leqslant c$ and all $j \neq i$. We claim that since the $f_i$ are monomials on a regular sequence of $Q$, then
\[ f_1, \ldots, f_c \text{ is regular on } Q/(f_{c+1}, \ldots, f_n).\]
Indeed, the monomial assumption implies that $\H_\ell(T(\f))=0$ for $\ell \geqslant 1$ \cite{Taylor}, and since $\gcd(f_i, f_j)$ is a unit for all $1 \leqslant i \leqslant c$ and all $j \neq i$, then 
\[
T(\f)\cong T'\otimes_Q T''\quad \text{where }T'=T(f_1,\ldots,f_c)\text{ and } T''=T(f_{c+1},\ldots f_n)\,.
\]
As $f_{c+1},\ldots,f_n$ are monomials on a regular sequence of $Q$, it follows that $T'\xra{\simeq} \overline{Q}\coloneqq Q/(f_{c+1},\ldots,f_n)$. Moreover, the fact that $\gcd(f_i,f_j)=1$ for all $1\les i<j\les c$ also implies $T' \cong \mathrm{Kos}^{ Q}(f_1, \ldots, f_c)$.
Thus we have the quasi-isomorphism below:
\[T(\f)\xra{\simeq}T'\otimes_Q \overline Q\cong\mathrm{Kos}^{ Q}(f_1,\ldots,f_c)\otimes_Q \overline Q \cong  \mathrm{Kos}^{\overline Q}(f_1,\ldots,f_c)\,;\]  
Finally, combining this with the fact that $\H_\ell(T(\f))=0$ for $\ell \geqslant 1$, we conclude that $f_1,\ldots,f_c$ is regular on $\overline Q$.
Thus $R$ has an embedded deformation of codimension $c$ determined by $J=(f_{c+1},\ldots, f_n)$ and $f_1, \ldots, f_c$. 
\end{proof}

\begin{remark}
In the proof of \cref{main thm monomial}, one can instead use \cite[Lemma~3]{Kaplansky:1962} to deduce that $f_1,\ldots,f_c$ is regular on $Q/(f_{c+1},\ldots,f_n)$. 
\end{remark}

\begin{corollary}
\label{monomial_equivalence}
Let $Q/I$ be a minimal regular presentation for $R$. Assume that $I$ is minimally generated by $f_1, \ldots, f_n$ which are monomials in some regular sequence of $Q$. The following are equivalent:
\begin{enumerate}
\setcounter{enumi}{-1}
    \item The ring $R$ has an embedded deformation of codimension $c$ defined by monomials.
    \item The ring $R$ has an embedded deformation of codimension $c$.
    \item The space of central elements of degree $2$ in $\pi(R)$ has rank at least $c$.
    \item The space of radical elements of degree $2$ in $\pi(R)$ has rank at least $c$.
    \item The cohomological support variety of $R$ is contained in a linear subspace of codimension $c$.
\end{enumerate}
\end{corollary}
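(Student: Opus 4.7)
The plan is to establish the corollary by closing a cycle of implications, most of which are already in hand from the preceding results. The trivial implication $(0) \Rightarrow (1)$ holds by definition, and the chain $(1) \Rightarrow (2) \Rightarrow (3) \Rightarrow (4)$ is exactly the content of \cref{th_radical_support}, which was established in full generality (the monomial hypothesis plays no role there). The only remaining implication is therefore $(4) \Rightarrow (0)$, and this is where the monomial assumption becomes essential.

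For $(4) \Rightarrow (0)$, I will invoke \cref{main thm monomial}. Suppose $\V_R(R)$ is contained in a codimension-$c$ linear subspace $L$ of $\mathbb{A}_k^n$. Letting $\mathbf{e}_1, \ldots, \mathbf{e}_n$ denote the basis of $I/\m I$ determined by the monomial generators $f_1, \ldots, f_n$, at least $c$ of these basis vectors must lie outside $L$, since $L$ has codimension $c$. After reordering we may assume $\mathbf{e}_{1}, \ldots, \mathbf{e}_{c} \notin \V_R(R)$. \cref{main thm monomial} then produces an embedded deformation of codimension $c$ with $J = (f_{c+1}, \ldots, f_n)$ and regular sequence $f_1, \ldots, f_c$. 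Since the defining regular sequence consists of monomials from the original generating set, this is a monomial embedded deformation, i.e.\ it witnesses $(0)$.

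The main obstacle has already been confronted in \cref{main thm monomial} itself, whose proof relied on the Taylor-model analysis of support varieties via \cref{key lemma}. The present corollary is in effect a repackaging of those results; the one observation worth highlighting is that the deformation produced by \cref{main thm monomial} is automatically monomial, as its defining elements are drawn directly from the given monomial generating set of $I$. This is what allows the cycle of implications to loop back all the way to $(0)$ rather than stopping at $(1)$, and in particular it shows that in the monomial setting every (central, radical, or hyperplane-detected) obstruction to being complete intersection is visibly realized by a monomial sub-deformation.
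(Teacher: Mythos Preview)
Your proof is correct and follows essentially the same route as the paper: the trivial implication $(0)\Rightarrow(1)$, the chain $(1)\Rightarrow(2)\Rightarrow(3)\Rightarrow(4)$ from \cref{th_radical_support}, and the closing implication $(4)\Rightarrow(0)$ via \cref{main thm monomial}, together with the observation that the embedded deformation produced there is visibly monomial. Your additional remark that a codimension-$c$ linear subspace must miss at least $c$ elements of any basis is exactly the pigeonhole step already embedded in the proof of \cref{main thm monomial}.
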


\begin{proof} 
The implication $(0) \Rightarrow (1)$ is immediate.
We already know that $(1) \Rightarrow (2) \Rightarrow (3) \Rightarrow (4)$ hold from \cref{th_radical_support}. The implication $(4) \Rightarrow (0)$ also holds in this setting because of \Cref{main thm monomial}, the proof of which shows that the defining ideal of the resulting embedded defomation is generated by monomials.
\end{proof}

\Cref{main thm monomial} says that if a monomial ring has an embedded deformation, then there must be an embedded deformation presented by a subset of the given monomial generators. But the set of minimal monomial generators is very special, and this behavior does not hold more generally, as the next example shows.

\begin{example}\label{example_Dupont}
    Let $k$ be a field. In \cite{Dupont}, Dupont considers the example of $R = Q/I$ with $Q = k[a,b,c,d,e]$ and $I = (f_1, \ldots, f_7)$, where
    \[f_1 = a^2c+d^3 \qquad f_2 = b^2d+e^3 \qquad f_3 = d^2e^2 \qquad
    f_4 = c^2 \qquad f_5 = cd \qquad f_6= ce \qquad f_7 = ab.\]
If we let $J = (f_1,\ldots,f_6)$, so that $I = J + (f_7)$, then the element $z_J \in \pi^2(R)$ is central by \cite{Dupont}.  
    Moreover, a calculation using Macaulay2 \cite{M2}, via the \texttt{ThickSubcategories} package, shows that $\V_R(R)$ is in fact the hyperplane determined by $J$, that is
    \vspace{-0.5em}
    \[ \V_R(R) =\ker(z_J).\]
    This shows that $R$ has at most an embedded deformation of codimension $1$.  Dupont considers a nonstandard grading on $R$, and proves \cite{Dupont} that there is no embedded deformation of $R$ giving rise to the element $z_J$ \emph{which respects this grading}; in particular, $f_7$ is not regular on $J$. 
    
    It is worth pointing out, however, that $I$ is homogeneous with respect to the standard grading on $Q$, and that under that grading there is a graded embedded deformation: one may take
    \[ J' = (f_1, f_2 + cf_7, f_3 +e^2 f_7, f_4, f_5, f_6 )\]
    and $J'$ and $ f_7$ form an embedded deformation for $R$.
\end{example}

\begin{example}
    Consider a field $k$ and $R=Q/I$ with $Q = k \llbracket a,b,c,d \rrbracket$ and $I = (f_1, f_2, f_3, f_4, f_5)$, where
    \[ f_1 = c^2, \quad f_2 = cd, \quad f_3 = a^2c + d^3, \quad f_4 = b^2d + ad^2, \quad f_5 = ab.\]
    In \cite{Iyengar:2001}, Iyengar notes that $ab$ generates a free summand of $I/I^2$. Moreover, Macaulay2 computations show that $\V_R(R)$ is precisely the hyperplane determined by the image of $f_5 = ab$ in $I / \m I$. Iyengar asks \cite{Iyengar:2001} whether $R$ has an embedded deformation, and indeed the answer is yes: $f_5$ is regular modulo
    \[ J = (f_1, f_2, f_3 + df_5, f_4 + cf_5) .\]
\end{example}

\vspace{0.5em}

\section{The realizability problem for rings defined by monomial ideals}\label{s_realizability}

This section is concerned with the so-called \textbf{Realizability Problem}. To discuss this, we adopt the following notation for the remainder of the section:
assume that a local ring $R$ has minimal regular presentation $Q/I$, let $k$ denote its residue field, and fix a list of minimal generators $\f=f_1,\ldots,f_n$ for $I$. Set
\[
E \coloneqq Q[ e_1,\ldots,e_n\mid \del e_i=f_i]\quad\text{and}\quad \S\coloneqq k[\chi_1,\ldots,\chi_n]\,,
\]
where each $\chi_i$ has cohomological degree 2.
 
 The realizability problem is the following: \emph{Determine what closed subsets of $\spec \S$ can be realized as $\V_R(M)$ for some $R$-complex $M$ with $\H(M)$ finitely generated.} 
When $R$ is complete intersection, every closed subset of $\spec \S$ is realizable; see, for example, \cite{Avramov/Iyengar:2007,Bergh,Burke/Walker:2015}.
 In \cite{Pollitz:2021}, it was shown that if $R$ is not complete intersection, then $\{\bm{0}\}$ is never realizable as $\V_R(M)$ when $M$ is a finitely generated $R$-module. Building on this, in \cite[Corollary~2.8]{BEJ2}, we established that if $R$ is a Cohen-Macaulay ring but not complete intersection, then 
\vspace{-0.5em}
 \[
 \dim \V_R(M)\geqslant 2
 \]
 for each $R$-complex $M$ with $\H(M)$ nonzero and
 finitely generated. In particular over such rings, $\{\bm{0}\}$ is never realizable. Based on the evidence above, we propose the following: 

\begin{conjecture}
\label{conjecture_realizability}
The ring $R$ is complete intersection if and only if $\{\bm{0}\}$ is realizable as $\V_R(M)$ for some $R$-complex $M$ with $\H(M)$ finitely generated.
 \end{conjecture}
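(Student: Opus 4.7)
The forward direction is immediate from \cref{ex_characterization}: when $R$ is complete intersection, $E \xra{\simeq} R$ forces $\Ext_E(R,k)=k$, so $\V_R(R)=\{\bm{0}\}$ is realized by $M=R$. So the task is to prove the converse: if some $R$-complex $M$ with $\H(M)$ nonzero and finitely generated has $\V_R(M)=\{\bm{0}\}$, then $R$ must be complete intersection.

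In the monomial setting of this paper, the converse is a free consequence of the lower bound $\dim\V_R(M)\geqslant n-\height(I)$ coming from the second theorem in the introduction. If $R$ is defined by $n$ monomials in a regular sequence and is not complete intersection, then $n>\height(I)$, so $\dim\V_R(M)\geqslant 1$ for every $M$ with $\H(M)$ nonzero and finitely generated; in particular $\V_R(M)\neq\{\bm{0}\}$. This handles the conjecture in every case to which the present paper applies.

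For arbitrary $R$, the strategy I would pursue is to reduce the problem to finitely generated $R$-modules, where the conclusion is known by \cite[Theorem~6.1.6]{Pollitz:2021}. Given a homologically bounded $M$ with $\H(M)$ finitely generated, the truncation triangles
\[
\tau_{\leqslant i}M \to M \to \tau_{>i}M \to \shift\tau_{\leqslant i}M
\]
build $M$ from the shifted modules $\shift^i\H_i(M)$ in finitely many steps. Support obeys the triangle inclusion $\V_R(B)\subseteq \V_R(A)\cup\V_R(C)$, and the hope is that some converse holds for the hypothesis $\V_R(M)=\{\bm{0}\}$ strongly enough to produce a nonzero $\H_i(M)$ with $\V_R(\H_i(M))=\{\bm{0}\}$.

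The main obstacle is precisely that the reverse containment fails in general: cancellations among Ext-groups mean that $\Ext_E^{\gg 0}(M,k)$ can vanish without any $\Ext_E^{\gg 0}(\H_i(M),k)$ vanishing. To overcome this, I would try to interpret $\V_R(M)=\{\bm{0}\}$ as the statement that $M$ lies in the thick subcategory of $\D(E)$ generated by $E$-perfect objects (via the identification of $\Ext_E(M,k)$ with a finitely generated $\S$-module in \cref{c_cohsupp}), and then combine this with the non-realizability of $\{\bm{0}\}$ for modules by an induction on the length of $\H(M)$, or through a Koszul-duality argument exchanging $E$-perfection with a finiteness condition over $R$. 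A second, complementary route would be to seek a direct lower bound on $\dim\V_R(M)$ for non-complete intersection $R$, generalizing the monomial bound above and the Cohen--Macaulay bound of \cite[Corollary~2.8]{BEJ2}; such a bound immediately implies the conjecture and is perhaps the cleanest path forward, though at present it is only known in restricted settings.
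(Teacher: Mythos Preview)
This statement is a \emph{conjecture} in the paper, not a theorem; the paper offers no proof of it in general and explicitly presents it as open. So there is no ``paper's own proof'' to compare against. What the paper does prove is the special case where $I$ is generated by monomials on a regular sequence (\cref{cid_monomial}), and your second paragraph correctly derives that case from the lower bound $\dim\V_R(M)\geqslant n-\height(I)$. Your first paragraph's treatment of the forward direction via \cref{ex_characterization} is also correct and matches the paper.

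Your attempt at the general converse, however, is not a proof, and you essentially say so yourself. The truncation-and-triangle strategy cannot work as stated: the containment $\V_R(B)\subseteq \V_R(A)\cup\V_R(C)$ goes the wrong way to force any individual $\H_i(M)$ to have small support, and the ``hope'' that some reverse inclusion holds is exactly the content of the conjecture. The alternative route via thick subcategories is likewise only a heuristic: the equivalence between $\V_R(M)=\{\bm{0}\}$ and $M$ being $E$-perfect (i.e., in $\thick_E(E)$) is known, but passing from this to a contradiction when $R$ is not complete intersection is precisely \cite[Theorem~6.1.6]{Pollitz:2021} for modules and is open for complexes---which is again the conjecture. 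In short, your proposal correctly isolates the forward direction and the monomial case, but for arbitrary $R$ it identifies the difficulty without resolving it; the paper does not resolve it either.
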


\begin{remark}
\label{cid_defect}
    Recall that the complete intersection defect of $R$, denoted $\cid(R)$, is defined as
    \[
    \cid(R) \colonequals \varepsilon_2(R)-\varepsilon_1(R)+\dim(R)=n-\height(I)\,.
    \]
See \cite{Avramov:1977,KiehlKunz} for more on this invariant. An important feature is that the complete intersection defect detects the complete intersection property: $R$ is complete intersection if and only if $\cid(R)=0$. 
    
A numerical strengthening of \cref{conjecture_realizability} in terms of this invariant is the following: \emph{$R$ is complete intersection if and only if $\dim \V_R(M)\geqslant \cid(R)$ for all $R$-complexes $M$ with $\H(M)$ nonzero and finitely generated}. We showed in \cite[Corollary~2.8]{BEJ2} in that this holds whenever $R$ is Cohen-Macaulay.
\end{remark}

\begin{theorem}\label{cid_monomial}
    With the notation above, if $\f$ is a list of monomials on a regular sequence of $Q$, then $\dim\V_R(M)\geqslant \cid(R)$ for all $R$-complexes $M$ with $\H(M)$ nonzero and finitely generated. In particular, \cref{conjecture_realizability} holds for such rings. 
\end{theorem}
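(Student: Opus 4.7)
The plan is to reduce the statement for arbitrary monomial rings on a regular sequence to the Cohen-Macaulay case proved in \cite[Corollary~2.8]{BEJ2}, which establishes $\dim \V_R(M) \geqslant \cid R$ whenever $R$ is Cohen-Macaulay. The remaining work is therefore to handle the non-Cohen-Macaulay monomial rings.

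A natural first step is to reduce to the case where the GCD graph of the minimal generators $\f = f_1,\ldots,f_n$ is connected, using \Cref{l_tor_ind,tor independent gives union of varieties}. If the generators split into two variable-disjoint groups, then $R \cong R_1 \otimes_Q R_2$ with $\Tor_i^Q(R_1,R_2)=0$ for $i>0$, and $\cid R = \cid R_1 + \cid R_2$; a K\"unneth-type decomposition of $\Ext_E(M,k)$ as an $\S$-module, combined with a filtration of $M$ by cyclic submodules, should reduce the verification of the inequality to the connected case.

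In the connected case, the plan is to describe $\V_R(M)$ via the $2$-periodic complex $\widehat{\cC}_{E_a}(F)$ of \Cref{c_2_periodic,prop_supp_Chat}, where $F$ is a bounded complex of finite rank free $Q$-modules quasi-isomorphic to $M$ as a dg $E$-module (guaranteed by \Cref{finite_semifree,taylor-tate}). By \Cref{c_support_using_hypersurfaces}, the complement of $\V_R(M)$ in $\mathbb{A}_K^n$ consists of those $a$ with $\pdim_{Q_a}(\tilde M) < \infty$, where $Q_a = \tilde Q/(\sum \tilde{a}_i f_i)$. The goal is to show this locus has codimension at least $n-\height(I)$; the Taylor model of \Cref{taylor-tate}, together with the explicit $\gcd$-based action of the $e_i$ on $T[X]$ exploited in \Cref{key lemma}, should produce explicit cycles witnessing nontrivial homology in $\widehat{\cC}_{E_a}(F)$ for a subvariety of $a$ of the required dimension.

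The main obstacle I anticipate is extending the argument from $M=R$ (where the Taylor-model analysis is most direct, via $\Ext_E(R,k) \cong \H(\cC_E(T[X]))$) to an arbitrary complex $M$. One route is to iterate the reduction of \Cref{linear_nzd}: modding out by a linear non-zero-divisor on both $R$ and $M$ preserves $\V_R(M)$ while potentially bringing $R$ into (or closer to) the Cohen-Macaulay regime, at which point \cite[Corollary~2.8]{BEJ2} applies directly. Alternatively, one can attempt to mimic the depth/amplitude argument underlying \cite[Corollary~2.8]{BEJ2} with the Taylor resolution substituting for Cohen-Macaulayness; since the bound $n-\height(I)$ is visibly governed by the number of Taylor generators rather than depth, this substitution has a good chance of succeeding without requiring $\depth R = \dim R$.
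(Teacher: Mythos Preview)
Your proposal is a collection of strategies rather than a proof, and the most concrete of them has a genuine gap. The reduction via \Cref{linear_nzd} requires an element of $\m\smallsetminus\m^2$ that is regular on $M$; but $M$ is an arbitrary complex with finitely generated homology, and nothing prevents every element of $\m$ from being a zero-divisor on $\H(M)$ (for instance, take $M=k$). So you cannot in general mod out toward the Cohen--Macaulay range, and the appeal to \cite[Corollary~2.8]{BEJ2} never gets off the ground. Likewise, the K\"unneth reduction of \Cref{l_tor_ind} is stated for $M=R$; for a general $M$ there is no reason $M$ decomposes compatibly with $R_1\otimes_Q R_2$, so your splitting into connected components of the GCD graph does not reduce the bound for arbitrary $M$.

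The paper's argument is closer to your final, vaguer suggestion, but with a specific mechanism you did not identify. Rather than invoking \cite[Corollary~2.8]{BEJ2}, it goes one step upstream to \cite[Theorem~2.7]{BEJ2}, which gives
\[
\dim \V_R(M)\ \geqslant\ n-\ll_{\Lambda}(A)
\]
for \emph{every} such $M$, where $\Lambda=E\otimes_Q k$, $A=T\otimes_Q k$ with $T$ the Taylor resolution, and $\ll_\Lambda$ is the Loewy length. The monomial hypothesis is used only to bound $\ll_\Lambda(A)\leqslant\height(I)$: if $i>\height(I)$ then any $i$ of the $f_j$ fail to be a regular sequence in the Cohen--Macaulay ring $Q$, so two of them share a nontrivial common factor, and by the Taylor multiplication (\Cref{taylor}, \Cref{remark product in taylor gcd}) their product $b_{j_s}b_{j_t}$ lies in $\m T$ and vanishes in $A$. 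Hence every $i$-fold product of the $b_j$ is zero in $A$, giving the Loewy bound and therefore $\dim\V_R(M)\geqslant n-\height(I)=\cid(R)$. No connectedness reduction, no case analysis on $M$, and no passage to the Cohen--Macaulay case is needed.
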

 
\begin{proof}
Set $\Lambda=E\otimes_Q k$. For a dg $\Lambda$-module $N$, we write $\ll_{\Lambda}(N)$ for its Loewy length, the smallest nonnegative integer $t$ such that $\Lambda^{\ges t} N=0$.

Let $T$ be the Taylor dg algebra on $\f$ over $Q$ and set $A=T\otimes_Q k$. As $\f$ is a list of monomials on a regular sequence in $Q$, it follows from \cref{taylor_resolution} that $T\xra{\simeq}R$. The augmentation map $E\to R$ defines a dg algebra map $E\to T$ by \cref{dg_structure}. In particular, this induces a dg $k$-algebra map $\Lambda\to A$, and so $A$ will be regarded as a dg $\Lambda$-module via this map. 

We claim that $\ll_{\Lambda}(A) \leqslant \height(I)$. 
To this end, in the notation of \cref{taylor}, the image of $\Lambda^i \to A$ is the $\Lambda$-submodule of $A$ generated by  all products of the form: 
    \[
    b_{j_1} \cdot b_{j_2} \cdots b_{j_i}\quad \text{where } j_1<\cdots<j_i\,.
    \]
    Thus, to justify the claim it suffices to show that any such product is zero in $A$ whenever $i>\height(I)$. Indeed, as $i$ is larger than the $\height(I)$ and $Q$ is Cohen-Macaulay, it follows that $f_{j_1},\ldots,f_{j_i}$ is not a regular sequence. Furthermore, since $\f$ is a list of monomials on a regular sequence we conclude that, up to reordering,  $\gcd(f_{j_1},f_{j_2})$ is not a unit in $Q$.  Hence, using the definition of the multiplication in \cref{taylor} on $T$, we have $b_{j_1}\cdot b_{j_2}\in \m T$  and so it follows that in $A$ we have
    \[
    b_{j_1}\cdots b_{j_i}=(b_{j_1}\cdot b_{j_2})\cdots b_{j_i}=0\cdot b_{j_3}\cdots b_{j_i}=0\,.
    \]
    Therefore, the claim holds.
    The argument in \cite[Theorem~2.7]{BEJ2} has already shown
    \[
    \dim \V_R(M)\geqslant n-\ll_{\Lambda}(A)
    \]
    for all $R$-complexes $M$ with $\H(M)$ nonzero and finitely generated. Therefore, combining this fact with the already established claim above, we obtain the desired inequality.
\end{proof}

\begin{remark}
    The same proof works in a more general setting: if for any $i>\height(I)$ and any subset $f_{j_1},\ldots, f_{j_i}$ of $\f$ there exists a pair $f_{j_s},f_{j_t}$ such that 
    \[\frac{f_{j_s}\cdot f_{j_t}}{f_{\{j_s,j_t\}}}\in \m,\] 
    then \Cref{conjecture_realizability} holds.
    One need only replace the Taylor complex $T$ in the proof with the entire Taylor model of $R$ over $Q$ and the rest of the proof remains unchanged. 
\end{remark}

\begin{example}\label{lucho_support_example}
Consider $R = Q/I$ from \Cref{lucho_example}, where $Q = k \llbracket x,y,z,w\rrbracket$ and $I = (x^2, xy, yz, zw, w^2)$. This is a non-Cohen-Macaulay ring with complete intersection defect $2$. \Cref{cid_defect} says that $\dim \V_R(M) \geqslant 2$ for all nonzero complexes $M$ with finitely generated homology. One can easily find $M$ with $\dim \V_R(M) = 3$, such as the cyclic module $M = R/(y,z).$
Indeed, one can compute directly, or apply \cite[Lemma 2.6]{Briggs/Grifo/Pollitz:2022}, to see that the cohomological support variety of $M$ is a 3-dimensional hyperplane. We do not know if there is an $R$-complex with finitely generated homology that has a 2-dimensional cohomological support variety. 
\end{example}

\section{Support varieties for rings defined by monomial ideals}\label{s_support_monomial}

Let us now focus on the support variety $\V_R(R)$ when $R$ is defined by monomials. More precisely, we will let $R$ be a local or positively graded ring with residue field $k$ and a minimal regular presentation $Q/I$, where $I$ is minimally generated by a list of monomials $\f=f_1,\ldots,f_n$ on a regular sequence of $Q$. The Taylor complex on $\f$ is now exact \cite{Taylor}, so we can compute $\V_R(R)$ by determining whether $\widehat{\cC}_{E_{a}}(T)$ is (in)exact for each $a \in \A^n_k$; by \cref{c_support_using_hypersurfaces}, $k$ can be assumed to be algebraically closed. 

We will show that given a fixed $n$, there are only finitely many varieties that can be realized as $\V_R(R)$ where $R$ is defined by monomials; see \cref{support_monomial}. To show that, we will now introduce two useful combinatorial tools to determine $\V_R(R)$ in this context.

\begin{definition}
\label{gcd_graph}
The GCD graph of $\f$ is the simple graph $\Gamma_{\f}$ with vertices $\{ 1, \ldots, n \}$ such that
\[ \text{there is an edge between $i$ and $j$ \quad if and only if \quad} \gcd(f_i, f_j) \text{ is not a unit}.\]
\end{definition}

\begin{remark}\label{gcd_graph_edge}
Note that there is an edge between $i$ and $j$ in $\Gamma_{\f}$ if and only if $b_i b_j\in \m T$ in the Taylor complex $T$ of $\f$; 
    see \Cref{remark product in taylor gcd}.
In these terms, we note that \cref{key lemma} \cref{keylemitem3} says if $n\geqslant 2$ and there is a vertex in $\Gamma_{\f}$ connected to all other vertices, then $\V_R(R)=\mathbb{A}^n$.
\end{remark}

\begin{definition}\label{definition_Taylor_graph}
   The {\bf Taylor graph associated to $\f$} is the directed simple graph defined as follows:

    \begin{itemize}[leftmargin=15pt]
        \item Vertices: one vertex for each subset $J \subseteq [n]$.
        \item Edges: given $J_1, J_2 \subseteq [n]$, there is no edge from $J_1$ to $J_2$ if $|J_1| \neq |J_2| \pm 1$. Given $J \subseteq [n]$ and $i \in [n]$, 
        \begin{enumerate}[label=$\circ$]

            \item Differential edges: If $i \in J$, there is a directed edge from $J$ to $J \smallsetminus \{i\}$ if and only if $f_{J \smallsetminus \{ i \}} = f_J$, or equivalently $f_i \mid f_J$. 
            \item Homotopy edges: If $i \notin J$, there is a directed edge from $J$ to $J \cup \{ i \}$ if and only if $f_i f_J = f_{J \cup \{ i \}}$, or equivalently, $\gcd(f_i, f_j)$ is a unit for all $j \in J$.
        \end{enumerate}
    \end{itemize}
\end{definition}

\begin{remark}\label{remark taylor graph differential}
    The Taylor graph of $\f$ contains all the data necessary to compute the differential in the complex $\widehat{\cC}_{E_{a}}(T)$ for every $a = (a_1, \ldots, a_n) \in \A^n_k$: the edges in the Taylor graph indicate which coefficients in the differential $d$ are nonzero, with differential edges recording unit coefficients in the differential of $T$, and homotopy edges recording units in the homotopies. In the notation of \Cref{taylor-tate}, 
    \[ d(b_J) \quad = \sum_{\substack{i \in J \\ \exists \text{ edge } J \to J \smallsetminus \{ i \} }} \sgn(\{i\}, J) \,\, b_{J \smallsetminus \{ i \}} \quad + \sum_{\substack{i \notin J \\ \exists \text{ edge } J \to J \cup \{ i \} }} \sgn(\{i\}, J) \,\, a_i \, b_{J \cup \{ i \}}.  \]
\end{remark}

\begin{remark}\label{remark taylor vs gcd}
    The GCD graph $\Gamma_{\f}$ of $\f$ contains some of the information encoded in the Taylor graph of $\f$. More precisely, given $J \subseteq [n]$ and $i \notin J$, there is an edge from $J$ to $J \cup \{ i \}$ in the Taylor graph of $\f$ if and only if in the GCD graph of $\f$ there is no edge between $i$ and any $j \in J$.
\end{remark}

\begin{remark}\label{no differentials 2 to 1}
    Whenever the monomials $\f$ form a minimal generating set for $I$, for all $i \neq j$ we have $f_i \nmid f_j$, and thus the Taylor graph of $\f$ has no differential edges of the form $\{ i, j \} \longrightarrow \{ \ell \}$. 
\end{remark}

\begin{remark}\label{remark one neighbor no diffs}
    We claim that if the vertex $i$ of $\Gamma_{\f}$ has degree $1$, then $f_i \nmid f_J$ for all $J \subseteq [n]$ such that $i \notin J$. Indeed, let $j$ be the unique neighbor of $i$ in $\Gamma_{\f}$. Since $\gcd(f_i, f_\ell)$ is a unit for all $\ell \notin \{ i, j \}$, if $f_i \mid f_J$ then we must have $j \in J$ and $f_i \mid f_j$. But this is impossible if the $\f$ are chosen to be a minimal generating set for $I$. We conclude that if the vertex $i$ of $\Gamma_{\f}$ has degree $1$, then the Taylor graph of $\f$ has no differential edges of the form $J \cup \{ i \} \longrightarrow J$. More generally, the same argument shows that if $J$ contains at most one of the neighbors of $i$ in $\Gamma_{\f}$, then $f_i \nmid f_J$ and the Taylor graph of $\f$ does not have the edge $J \longrightarrow J \smallsetminus \{ i \}$.
\end{remark}

\begin{theorem}\label{support_monomial}
There are only finitely many closed subsets of $\mathbb{A}_k^n$ that are realizable as $\V_R(R)$, for any $R$ with regular presentation $Q/I$ where $I$ is generated by $n$ monomials $\f=f_1,\ldots,f_n$ on some regular sequence $\bm{x}$ of $Q$, and the $\f$ correspond to the standard basis of $\mathbb{A}_k^n$. The closed subsets that are realizable depend only on $n$, and are independent of $\bm{x}$ or $Q$.
\end{theorem}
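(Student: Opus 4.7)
The plan is to show that $\V_R(R)$ depends on the data $(\f, Q, \bm{x})$ only through the combinatorics of the Taylor graph of $\f$, and then to observe that there are only finitely many possible Taylor graphs on the vertex set of $2^n$ subsets of $[n]$.

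Since $\f$ consists of monomials on a regular sequence, the Taylor complex $T = T(\f)$ is a $Q$-free resolution of $R$ by \cref{taylor_resolution}. By \cref{prop_supp_Chat}, $\V_R(R)$ is determined by the locus of $a \in \mathbb{A}_K^n$ at which the $2$-periodic $K$-complex $\widehat{\cC}_{E_a}(T)$ has nonzero homology. In the $K$-basis $\{b_J \otimes 1 : J \subseteq [n]\}$, the differential $d_a$ has entries of the form $\sgn(\{i\}, J)$ (from the differential of $T$, contributed across differential edges) or $\sgn(\{i\}, J)\, a_i$ (from the $e_i$-action, contributed across homotopy edges), as in \cref{remark taylor graph differential}. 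The key observation is that the positions of nonzero entries are precisely the edges of the Taylor graph of $\f$, while the signs $\sgn(\{i\}, J)$ depend only on the fixed ordering of $[n]$; in particular, nothing in this matrix depends on $Q$ or $\bm{x}$ beyond the Taylor graph.

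The non-exactness of $\widehat{\cC}_{E_a}(T)$ at each position is therefore cut out by rank conditions on $d_a$, which define a Zariski closed subvariety of $\mathbb{A}_K^n$ by polynomials in the $a_i$'s with integer coefficients, depending only on the Taylor graph. Since the Taylor graph is a directed simple graph on the $2^n$ subsets of $[n]$ subject to the constraints of \cref{definition_Taylor_graph}, there are only finitely many possibilities for it, and hence only finitely many closed subsets of $\mathbb{A}_k^n$ can be realized as $\V_R(R)$ in the monomial setting; this finite list is a function of $n$ alone. The main step requiring care is the explicit universal description of $d_a$ in terms of the Taylor graph; once this is secured by \cref{remark taylor graph differential}, the finiteness conclusion is essentially immediate.
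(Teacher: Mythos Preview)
Your proposal is correct and follows essentially the same route as the paper: both arguments reduce to the fact that the differential of $\widehat{\cC}_{E_a}(T)$ is governed entirely by the Taylor graph via \cref{remark taylor graph differential}, and that there are only finitely many such graphs for fixed $n$. Your version is slightly more explicit in justifying why the Taylor complex may be used in \cref{prop_supp_Chat} and in spelling out that non-exactness is a rank condition, whereas the paper additionally remarks that the Taylor graph is determined by the GCD and LCM lattices of $\f$; neither difference is substantive.
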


\begin{proof}
It suffices to note that the GCD and LCM lattices of $\f$ completely determine the Taylor graph of $\f$. By \cref{remark taylor graph differential}, $\V_R(R)$ is determined by the Taylor graph, and there are only finitely many directed graphs meeting the specifications of a Taylor graph; the number of allowable graphs is also only dependent on $n$, and does not depend on any properties of the regular sequence $\bm{x}$ nor the ambient regular ring $Q$.
\end{proof}
    
The combinatorial structure of the Taylor graph can be used to quickly determine properties of $\V_R(R)$. In what follows, whenever the defining ideal $I$ of $R$ is minimally generated by $n$ elements and $\V_R(R) = \A^n_k$, we will say that the support of $R$ is full, or simply that $\V_R(R)$ is full.

\begin{lemma}\label{isolated vertex implies full support}
    If the Taylor graph of $\f$ has an isolated vertex, then the support of $R$ is full.
\end{lemma}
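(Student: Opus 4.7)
The plan is to produce, for every $a\in\mathbb{A}_K^n$, an explicit nonzero homology class in the two-periodic complex $\widehat{\cC}_{E_a}(T)$, and then invoke \cref{prop_supp_Chat} to conclude $\V_R(R)=\mathbb{A}_k^n$. Since $\f$ is a list of monomials on a regular sequence, the Taylor complex $T$ is a free $Q$-resolution of $R$ endowed with a dg $E$-module structure under which each $e_i$ acts as multiplication by $b_i$ (see \cref{taylor_resolution,dg_structure}), so $T$ is the natural resolution to feed into \cref{prop_supp_Chat}.

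Let $J\subseteq[n]$ index an isolated vertex of the Taylor graph and consider the $K$-basis element $b_J\otimes 1$ of $\widehat{\cC}_{E_a}(T)$. The first step is to check that $d_a(b_J)=0$. By \cref{remark taylor graph differential}, after reducing modulo $\m$ only the terms corresponding to out-edges of $J$ survive in the expansion of $d_a(b_J)$: differential out-edges $J\to J\smallsetminus\{i\}$ contribute $\pm\,b_{J\smallsetminus\{i\}}$, while homotopy out-edges $J\to J\cup\{i\}$ contribute $\pm a_i\,b_{J\cup\{i\}}$; every other coefficient is a non-unit monomial and so maps to zero in $K$. Because $J$ has no out-edges, $d_a(b_J)=0$ identically in $a$.

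The second step is to show that $b_J\notin\operatorname{im}(d_a)$. For any $L\subseteq[n]$ with $|L|=|J|\pm 1$, the same description shows that the coefficient of $b_J$ in $d_a(b_L)$ is nonzero in $K$ only when the corresponding edge $L\to J$ is present in the Taylor graph, that is, either a differential edge from the level above or, carrying the scalar $a_i$, a homotopy edge from the level below. Since $J$ has no in-edges, every $d_a(b_L)$ has vanishing coefficient at $b_J$, so $b_J$ lies outside the $K$-span of $\{d_a(b_L)\}_L$ and therefore outside $\operatorname{im}(d_a)$.

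Combining the two steps, $b_J\otimes 1$ represents a nonzero homology class of $\widehat{\cC}_{E_a}(T)$ for every $a\in\mathbb{A}_K^n$, so $a\in\V_R(R)$ by \cref{prop_supp_Chat} and hence $\V_R(R)=\mathbb{A}_k^n$. There is no serious obstacle beyond carefully matching the unit-versus-non-unit dichotomy in the Taylor differential and in multiplication by $b_i$ with the edge conditions defining the Taylor graph; the fact that \emph{isolated} means the simultaneous absence of both in-edges and out-edges is precisely what makes the cycle and non-boundary conclusions hold uniformly in $a$, yielding full support in one stroke.
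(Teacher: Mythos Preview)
Your proof is correct and follows essentially the same argument as the paper's: both pick the basis element $b_J$ corresponding to the isolated vertex, use the absence of out-edges (via \cref{remark taylor graph differential}) to see that $b_J$ is a cycle in $\widehat{\cC}_{E_a}(T)$, use the absence of in-edges to see that $\im d_a\subseteq\operatorname{span}_k\{b_L:L\neq J\}$, and conclude via \cref{prop_supp_Chat}. Your restriction to $L$ with $|L|=|J|\pm1$ in the non-boundary step is harmless, since those are the only $b_L$ whose image under $d_a$ can have a $b_J$-component anyway.
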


\begin{proof}
 Let $J$ be an isolated vertex in the Taylor graph of $\f$, and fix $a \in \A^n_k$ and $C = \widehat{\cC}_{E_{a}}(T)$. Since there are no edges out of $J$ in the Taylor graph of $\f$, by \Cref{remark taylor graph differential} we see that $d(b_J) = 0$, so $b_J$ is a cycle in $C$. Moreover, since there are no edges into $J$, the formula in \Cref{remark taylor graph differential} guarantees that $b_J$ is not a boundary. In fact, the formula gives
\[ \im d \subseteq \mathrm{span}_k \{ b_\ell \mid \ell \subseteq [n], \ell \neq J \}.\] 
Thus $C$ is inexact, and since this was independent of the choice of $a$, we conclude that $\V_R(R) = \A^n_k$.
\end{proof}

If $I$ is a monomial ideal satisfying the conditions of \Cref{key lemma} (\ref{keylemitem3}), then the Taylor graph of $\f$ has an isolated vertex, although the fact that $R$ has full support already follows from \Cref{key lemma}. However, \Cref{isolated vertex implies full support} applies in situations where \Cref{key lemma} (\ref{keylemitem3}) does not:

\begin{example}
Let $Q = k[a,b,c,d,e]$ with $k$ a field. Set
\[f_1 = ab, \quad f_2 = bc, \quad f_3 = cd, \quad f_4 = de,\]
and $I = (f_1, f_2, f_3, f_4)$, the edge ideal of a path of length $4$. One cannot apply \Cref{key lemma} (\ref{keylemitem3}), yet one can check that $\{2, 3\}$ is an isolated vertex of the Taylor graph of $\f$; in fact, this is the only isolated vertex. Here is the Taylor graph of $\f$:

\begin{center}
    \begin{tikzpicture}[
    scale = 0.9,
  node distance=1cm and 1.5cm,
  every node/.style={circle, draw, minimum size=8mm},
  ->,
  baseline=(current bounding box.center)
]

\node (1234) at (0,0) {1234};

\node (123)  at (2,2.5)  {123};
\node (124)  at (2,1)  {124};
\node (134)  at (2,-.5) {134};
\node (234)  at (2,-2) {234};

\node (12) at (4,4)  {12};
\node (13) at (4,2.75)    {13};
\node (14) at (4,1.25)  {14};
\node (23) at (4,-.25)    {23};
\node (24) at (4,-1.75) {24};
\node (34) at (4,-3)   {34};

\node (1) at (6,2.5)  {1};
\node (2) at (6,1)    {2};
\node (3) at (6,-.5)  {3};
\node (4) at (6,-2)   {4};

\node (empty) at (8,0) {$\varnothing$};

\foreach \x in {124,134} {
  \draw (1234) -- (\x);
}

\foreach \x in {1,2,3,4} {
  \draw (empty)-- (\x);
}

\draw (2)--(24);
\draw (4)--(24);
\draw (4)--(14);
\draw (1)--(14);
\draw (3)--(13);
\draw (1)--(13);
\draw (123)--(13);
\draw (12)--(124);
\draw(34)--(134);
\draw (234)--(24);
\end{tikzpicture}

\vspace{0.5em}
Figure 1: Taylor graph for $(ab,bc,cd,de)$.
\end{center}
\end{example}

The support of $R$ might be full even if the Taylor graph of $\f$ has no isolated vertices.

\begin{example}
\label{spring_example}
Consider $I = (xy,yz,xz)$ in $Q = k \llbracket x, y, z \rrbracket$. The ring $R = Q/I$ does not satisfy the hypothesis of \Cref{key lemma} (\ref{keylemitem3}), and its Taylor graph has no isolated vertices:

\begin{center}
\begin{tikzpicture}[
  node distance=1cm and 1.5cm,
  every node/.style={circle, draw, minimum size=8mm},
  ->,
  baseline=(current bounding box.center)
]

\node (123) at (0,0) {123};

\node (23)  at (2,-1)  {23};
\node (13)  at (2,0)  {13};
\node (12)  at (2,1) {12};

\node (1) at (4,1)  {1};
\node (2) at (4,0)    {2};
\node (3) at (4,-1)  {3};

\node (empty) at (6,0) {$\varnothing$};

\foreach \x in {12,13,23} {
  \draw (123) -- (\x);
}

\foreach \x in {1,2,3} {
  \draw (empty)-- (\x);
}

\end{tikzpicture}

    \vspace{.5em}
    Figure 2: Taylor graph for $(xy,xz,yz)$
\end{center}

Nevertheless, one can easily see that the support of $R$ is full in many ways: by noting that $b_{\{1,2\}}$ is a cycle but not a boundary in $\widehat{\cC}_{E_{a}}(T)$ for all $a \in \A^3_k$, by doing direct calculations, using Macaulay2, or noting that $R$ is Golod and thus has full support \cite[Theorem 4.1]{Briggs/Grifo/Pollitz:2022}.
\end{example}

\begin{lemma}\label{lemma isolated in degree 2}
    Suppose that there exist $i \neq j$ with the following properties:
    \begin{itemize}
    \item The GCD graph of $\f$ contains an edge between $i$ and $j$.
    \item For all $\ell \in [n] \smallsetminus \{i, j \}$, the GCD graph of $\f$ contains an edge between $\ell$ and exactly one of $i$ or $j$. 
    \end{itemize}
    Then $\V_R(R)$ is full.
\end{lemma}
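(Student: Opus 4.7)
The plan is to prove that the subset $J_0 = \{i,j\} \subseteq [n]$ is an isolated vertex in the Taylor graph of $\f$, and then conclude by \cref{isolated vertex implies full support}. This requires verifying that $J_0$ has no outgoing and no incoming edges of either type.

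For outgoing edges: the two potential differential edges $\{i,j\}\to\{i\}$ and $\{i,j\}\to\{j\}$ are ruled out directly by \cref{no differentials 2 to 1} since $\f$ is a minimal generating set of $I$. A homotopy edge $\{i,j\}\to\{i,j,\ell\}$ requires $\gcd(f_\ell,f_i)$ and $\gcd(f_\ell,f_j)$ to both be units, but the hypothesis guarantees that every $\ell \in [n]\smallsetminus\{i,j\}$ is adjacent in $\Gamma_{\f}$ to at least one of $i$ or $j$, so this case cannot occur.

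For incoming edges: homotopy edges $\{i\}\to\{i,j\}$ or $\{j\}\to\{i,j\}$ are ruled out because $\gcd(f_i,f_j)$ is not a unit (there is an edge between $i$ and $j$ in $\Gamma_{\f}$). This leaves the main obstacle, namely ruling out differential edges $\{i,j,\ell\}\to\{i,j\}$. Such an edge exists iff $f_\ell \mid f_{\{i,j\}} = \operatorname{lcm}(f_i,f_j)$, and I expect this to be the subtle step. The key point is to exploit the ``exactly one'' part of the hypothesis: without loss of generality, $\ell$ is adjacent to $i$ but not $j$ in $\Gamma_{\f}$, so $\gcd(f_\ell, f_j)$ is a unit. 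Working variable-by-variable in the regular sequence $x_1,\ldots,x_d$ underlying the monomials, any $x_k$ that divides $f_\ell$ cannot divide $f_j$, so the $x_k$-exponent of $\operatorname{lcm}(f_i,f_j)$ equals the $x_k$-exponent of $f_i$; thus $f_\ell \mid \operatorname{lcm}(f_i,f_j)$ forces $f_\ell \mid f_i$, contradicting the fact that $\f$ is a minimal generating set for $I$.

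Having established both that no edge leaves $\{i,j\}$ and that no edge enters $\{i,j\}$ in the Taylor graph, the vertex $\{i,j\}$ is isolated, and \cref{isolated vertex implies full support} immediately yields $\V_R(R) = \mathbb{A}_k^n$.
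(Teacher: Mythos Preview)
Your argument is correct and follows essentially the same path as the paper: show that $\{i,j\}$ is an isolated vertex of the Taylor graph by ruling out all four types of edges, then invoke \cref{isolated vertex implies full support}. Your variable-by-variable justification that $f_\ell\mid f_{\{i,j\}}$ together with $\gcd(f_\ell,f_j)$ a unit forces $f_\ell\mid f_i$ is slightly more explicit than the paper's version, but the reasoning is the same.
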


\begin{proof}
    We will show that $\{ i, j \}$ is an isolated vertex of the Taylor graph on $\f$, which by \Cref{isolated vertex implies full support} implies that $R$ has full support.
    By \Cref{no differentials 2 to 1} there are no edges of the form $\{i , j \} \longrightarrow \{ \ell \}$ for all $\ell \in [4]$.
    
    For $\ell \in [n] \smallsetminus \{ i , j \}$, there is an edge $\{ i, j\} \longrightarrow \{ i, j,\ell \}$ if and only if $\gcd(f_i, f_\ell)$ and $\gcd(f_j, f_\ell)$ are both units. However, we assumed that (exactly) one of these is not a unit, and thus there is no such edge. We have now shown that there are no edges out of $\{ i, j \}$, and thus $b_{\{ i, j \}}$ is a cycle in $\widehat{\cC}_{E_{a}}(T)$ for all $a \in \A^n_k$.

    Our assumption that $i$ and $j$ are connected in the GCD graph means that $\gcd(f_i, f_j)$ is a nonunit, and thus the Taylor graph does not contain the edges $\{ i \} \longrightarrow \{ i, j \}$ and $\{ j \} \longrightarrow \{ i, j \}$. 
    
    It remains to look at potential edges of the form $\{i, j, k\} \longleftarrow \{i, j, \ell \}$ for some $\ell \notin \{ i, j\}$. The Taylor graph contains this edge if and only if $f_\ell$ divides $f_{\{ i, j \}}$. Our assumption on the GCD graph implies that one of $\gcd(f_i, f_\ell)$ of $\gcd(f_j, f_\ell)$ is a unit. But if $\gcd(f_i, f_\ell)$ is a unit and $f_\ell$ divides $f_{\{ i, j \}}$, we must have that $f_\ell$ divides $f_{j}$, which is impossible since we chose $f_1, \ldots, f_n$ to be minimal generators of $I$. We conclude that there is no edge in the Taylor graph of $I$ of the form $\{i, j, k\} \longleftarrow \{i, j, \ell \}$, and this finishes our proof that $\{ i, j \}$ is an isolated vertex in the Taylor graph of $\f$. By \Cref{isolated vertex implies full support}, the support of $R$ is full.
\end{proof}

The last main result of this section is a classification of the varieties realizable as $\V_R(R)$ for some $R$ defined by monomials for small values of $n$. The next lemma will help with that classification.

\begin{lemma}
\label{monomial tor independent}
Let $R$ have regular presentation $Q/I$, with $I$ generated by monomials $\f=f_1,\ldots,f_n$ {on a} regular sequence $\bm{x} = x_1, \ldots, x_d$ of $Q$. 
If $f_1,\ldots,f_m$ are monomials on $x_1,\ldots,x_c$ and $f_{m+1},\ldots, f_n$ are monomials on $x_{c+1},\ldots,x_d$, then under the homeomorphism $\A_k^{n}\cong \A_k^{m}\times_k  \A_k^{n-m}$ we have $\V_R(R)\cong \V_{R_1}(R_1)\times_k \V_{R_2}(R_2)$,
where $R_1$ and $R_2$ have regular presentations $Q/(f_1,\ldots,f_m)$ and $Q/(f_{m+1},\ldots, f_n)$, respectively. 
\end{lemma}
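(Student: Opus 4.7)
The plan is to reduce to Proposition 2.10: that result delivers exactly the product decomposition we want, provided we verify the Tor-independence $\Tor^Q_i(R_1,R_2)=0$ for all $i>0$. So the entire proof reduces to establishing this vanishing, which I would do by comparing Taylor complexes built using the factorization into the variables $x_1,\ldots,x_d$.

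Let $T_1=T(f_1,\ldots,f_m)$, $T_2=T(f_{m+1},\ldots,f_n)$, and $T=T(\f)$ be the Taylor dg $Q$-algebras from Construction 3.2. Since the hypothesis is that each of these three lists consists of monomials on (part of) a $Q$-regular sequence, Remark 3.5 provides quasi-isomorphisms $T_1\xra{\simeq}R_1$, $T_2\xra{\simeq}R_2$, and $T\xra{\simeq}R$. The main technical step is to produce an isomorphism of dg $Q$-algebras $T_1\otimes_Q T_2\cong T$, given on basis elements by $b_J\otimes b_K\mapsto b_{J\cup K}$ for $J\subseteq\{1,\ldots,m\}$ and $K\subseteq\{m+1,\ldots,n\}$. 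This works because $f_J$ involves only $x_1,\ldots,x_c$ while $f_K$ involves only $x_{c+1},\ldots,x_d$, so $f_{J\cup K}=f_Jf_K$ and every relevant GCD $f_Jf_K/f_{J\cup K}$ is a unit; moreover, every element of $J$ is strictly less than every element of $K$, which forces $\sgn(J,K)=+1$ in Notation 3.1. A direct check with the formulas of Construction 3.2 shows the multiplication matches, and the Taylor differential $\partial(b_{J\cup K})$ splits into a sum indexed by elements of $J$ (yielding $\partial(b_J)\cdot b_K$) and a sum indexed by elements of $K$ (yielding $(-1)^{|J|}b_J\cdot\partial(b_K)$), matching the total-complex differential on $T_1\otimes_Q T_2$.

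Once this dg-algebra isomorphism is in hand, we have
\[
\Tor^Q_i(R_1,R_2) \;=\; \H_i(T_1\otimes_Q T_2) \;\cong\; \H_i(T) \;=\; 0 \quad\text{for all } i>0,
\]
since $T$ is a free resolution of $R$. Proposition 2.10 then yields the homeomorphism $\V_R(R)\cong\V_{R_1}(R_1)\times_k\V_{R_2}(R_2)$, compatibly with the coordinate decomposition $\A_k^n\cong\A_k^m\times_k\A_k^{n-m}$; the latter identification is justified since $f_1,\ldots,f_m$ and $f_{m+1},\ldots,f_n$ form minimal generating sets for $I_1$ and $I_2$ respectively (any redundancy would give redundancy in the minimal generating set $\f$ of $I$), so $\varepsilon_2(R)=\varepsilon_2(R_1)+\varepsilon_2(R_2)$. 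The only delicate point in the argument is the sign bookkeeping in the dg-algebra isomorphism, but it is essentially automatic thanks to the chosen ordering of indices.
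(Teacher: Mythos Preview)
Your proof is correct and follows essentially the same approach as the paper: reduce to Proposition~2.10 by establishing $\Tor^Q_i(R_1,R_2)=0$ for $i>0$ via the dg-algebra isomorphism $T(\f)\cong T(f_1,\ldots,f_m)\otimes_Q T(f_{m+1},\ldots,f_n)$ and the acyclicity of the Taylor complex on monomials in a regular sequence. Your version is more explicit about the isomorphism (the formula $b_J\otimes b_K\mapsto b_{J\cup K}$ and the sign/coefficient checks) and about why the minimal generating sets split, but the argument is the same.
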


\begin{proof}
It suffices to show that \cref{tor independent gives union of varieties} applies, as $\Tor^Q_i(R_1,R_2)=0$ for $i>0$, in this setting. Indeed, the assumptions on $\f$ provide an isomorphism of dg $Q$-algebras:
    \[
    T(\f)\cong T(f_1,\ldots,f_m)\otimes_Q T(f_{m+1},\ldots,f_n)\,.
    \]
    Moreover, each Taylor complex above only has homology in degree zero since $\f$ is a list of monomials on a regular sequence of $Q$ \cite{Taylor}.
\end{proof}

\begin{theorem}\label{th_nleq4}
Let $R$ be a local or positively graded ring, with a minimal regular presentation $Q/I$. Let $I$ be minimally generated by a list of monomials $\f=f_1,\ldots,f_n$  on a regular sequence of $Q$. 

If $n\leqslant 4$, then the cohomological support variety of $R$ is a coordinate subspace
\[
\V_R(R) = \left\{ a\in \mathbb{A}_k^n\mid a_i=0 \text{ for all } i\in S\right\}
\]
where $S\subseteq\{ 1,\ldots, n\}$ and $c= |S|$ is the largest codimension of an embedded deformation of $R$.
\end{theorem}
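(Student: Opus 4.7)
The plan is to reduce the problem to the case where the GCD graph $\Gamma_{\f}$ has no isolated vertices, and then rule out every possible connected configuration on at most four vertices by showing $\V_R(R)$ is full in each case.

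By \cref{monomial_equivalence}, the largest codimension $c$ of an embedded deformation of $R$ equals the number of $i\in[n]$ such that $\supp(f_i)\cap\supp(f_j)=\varnothing$ for all $j\ne i$; equivalently, $c$ is the number of isolated vertices of $\Gamma_{\f}$. Let $S\subseteq[n]$ be this set of isolated vertices. By hypothesis, the monomials indexed by $S$ are supported on variables disjoint from those supporting the remaining monomials, so \cref{monomial tor independent} yields
\[
\V_R(R)\;\cong\;\prod_{i\in S}\V_{R_i}(R_i)\times \V_{R'}(R')\;,
\]
where each $R_i$ is the hypersurface cut out by $f_i$ (so $\V_{R_i}(R_i)=\{0\}\subseteq\A_k^1$ by \cref{ex_characterization}) and $R'$ is presented by the remaining $n-c$ monomials whose GCD graph is $\Gamma_{\f}$ with the isolated vertices removed, and thus has no isolated vertices. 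To finish it is enough to prove that $\V_{R'}(R')=\A_k^{n-c}$; combining this with the decomposition above gives exactly the claimed coordinate subspace.

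Since the induced graph for $R'$ has no isolated vertices, \cref{monomial tor independent} applied to its connected components further reduces us to the case where $\Gamma_{\f}$ is connected with $n\leq 4$ vertices and no isolated vertex, and the goal is to show $\V_R(R)=\A_k^n$. Now I enumerate. For $n=1$ the hypothesis (no isolated vertex) is vacuous. For $n=2$, $\Gamma_{\f}$ has an edge, so $\gcd(f_1,f_2)$ is a nonunit and \cref{key lemma}(3) applies (either vertex is connected to all others). For $n=3$, any connected graph has a vertex of degree $2=n-1$, so \cref{key lemma}(3) again applies.

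The only nontrivial case is $n=4$, where the connected simple graphs are the two trees $P_4$ and $K_{1,3}$, the two graphs with one cycle ($C_4$ and the triangle with a pendant edge), $K_4$ minus an edge, and $K_4$; together with the disconnected graph consisting of two disjoint edges. The disconnected case splits as a product of two $n=2$ cases by \cref{monomial tor independent}. In $K_{1,3}$, triangle-plus-pendant, $K_4$ minus an edge, and $K_4$, some vertex has degree $3$, so \cref{key lemma}(3) gives full support. For $P_4=1{-}2{-}3{-}4$, the edge $(2,3)$ satisfies the hypothesis of \cref{lemma isolated in degree 2}: vertex $1$ is adjacent only to $2$, and vertex $4$ only to $3$. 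For $C_4$, any edge $(i,i{+}1)$ works similarly in \cref{lemma isolated in degree 2}. In every instance $\V_R(R)=\A_k^n$, finishing the proof. The bulk of the work is the enumeration at $n=4$, and the main subtlety is recognizing that \cref{lemma isolated in degree 2} precisely handles the two ``cyclic'' configurations $P_4$ and $C_4$ where no vertex reaches all others.
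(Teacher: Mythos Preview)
Your proof is correct and follows essentially the same approach as the paper: reduce via \cref{monomial tor independent} to connected GCD graphs, dispatch the cases with a universal vertex using \cref{key lemma}(3), and handle $P_4$ and $C_4$ with \cref{lemma isolated in degree 2}. Your write-up is in fact slightly more explicit than the paper's in identifying $S$ with the set of isolated vertices of $\Gamma_{\f}$ and deriving the coordinate-subspace description directly from the product decomposition; the paper leaves this implicit and simply notes that the join of coordinate subspaces is a coordinate subspace. One small wording glitch: you list the two-disjoint-edges graph among ``the connected simple graphs'' on four vertices before treating it separately---just move that clause out of the connected enumeration.
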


\begin{remark}
\Cref{th_nleq4} is false when $n$ is larger than $4$. For instance, the the support variety of the monomial ring $R$ of \Cref{lucho_example} is a union of two hypersurfaces.
\end{remark}

\begin{proof}[Proof of \cref{th_nleq4}]
If $\Gamma_{\f}$ is disconnected, we can write $I$ as a sum of two ideals $I_1$ and $I_2$ in a smaller number of generators that satisfy the hypothesis of \cref{monomial tor independent}, so the support of $R$ can be computed from the supports of $Q/I_1$ and $Q/I_2$. Note that the join of coordinate subspaces is a coordinate subspace, so this reduces the problem to a smaller value of $n$. Thus we may assume that the GCD graph of $\f$ is connected.
   
When $n \leqslant 3$, every connected graph with three vertices or fewer has a vertex connected to all others, and so by part \cref{keylemitem3} of \cref{key lemma} we get $\V_R(R)=\mathbb{A}^n$. Hence we need only to consider the case when $n=4$.
   
Up to renaming the vertices, there are only two connected graphs on four vertices such that no vertex is connected to all others:

\vspace{-1.2em}
\begin{center}
\begin{tikzpicture}[scale=.7, every node/.style={circle, draw}]
	\node (1) at (0,2) {$1$};
	\node (2) at (0,0) {$2$};
	\node (3) at (2,0) {$3$};
	\node (4) at (2,2) {$4$};

\draw (1) to (2);
\draw (2) to (3);
\draw (3) to (4);
	\end{tikzpicture}
\hspace{5em}
 \begin{tikzpicture}[scale=.8, every node/.style={circle, draw}]
	\node (1) at (0,2) {$1$};
	\node (2) at (0,0) {$2$};
	\node (3) at (2,0) {$3$};
	\node (4) at (2,2) {$4$};

\draw (1) to (2);
\draw (2) to (3);
\draw (3) to (4);
\draw (1) to (4);
\end{tikzpicture}
\end{center}
    \vspace{-0.5em}
In both cases $\{ 2, 3\}$ satisfies the conditions of \Cref{lemma isolated in degree 2}, and thus the support of $R$ is full.
\end{proof}

\begin{theorem}\label{classification of n=5}
Let $R$ be a local or positively graded ring, with a minimal regular presentation $Q/I$, and assume $I$ is minimally generated by five monomials on a regular sequence of $Q$. 
Then the cohomological support variety of $R$ is either a coordinate subspace of $\A^5_k$ or a union of two hyperplanes. More precisely, up to reordering of the generators, 
\[
\V_R(R) = 
\begin{cases}
\mathcal{V}(\chi_1\chi_5) & \text{if }\Gamma_{\f}\text{ is one graph 1 or 2 below and } f_3 \mid f_{24} \\
\left\{ a\in \mathbb{A}_k^n\mid a_i=0 \text{ for all } i\in S\right\} & \text{otherwise}\,, 
\end{cases}
\]
where $S\subseteq\{ 1,\ldots, n\}$ and $c= |S|$ is the largest codimension of an embedded deformation of $R$.

\vspace{0.5em}
\begin{center}
\begin{minipage}{0.1\textwidth}
    Graph 1
\end{minipage}
\begin{minipage}{0.18\textwidth}

\centering

\vfill

\begin{tikzpicture}[scale=.7, every node/.style={circle, draw,}]
  \node (v1) at (0.45,-1) {1};
  \node (v2) at (1.125,0.25) {2};
  \node (v3) at (2,1.85) {3};
  \node (v4) at (2.875,0.25) {4};
  \node (v5) at (3.55,-1) {5};
  \draw (v1) -- (v2);
  \draw (v2) -- (v3);
  \draw (v4) -- (v5);
  \draw (v3) -- (v4);
\end{tikzpicture}

\vfill
\end{minipage}
\quad\quad\quad 
\begin{minipage}{0.2\textwidth}
\centering

\begin{tikzpicture}[scale=.7, every node/.style={circle, draw}]
  \node (v1) at (0.45,-1) {1};
  \node (v2) at (1.125,0.25) {2};
  \node (v3) at (2,1.85) {3};
  \node (v4) at (2.875,0.25) {4};
  \node (v5) at (3.55,-1) {5};
  \draw (v1) -- (v2);
  \draw (v2) -- (v3);
  \draw (v2) -- (v4);
  \draw (v4) -- (v5);
  \draw (v3) -- (v4);
\end{tikzpicture}
\end{minipage}
\begin{minipage}{0.1\textwidth}
    Graph 2
\end{minipage}
\end{center}
\end{theorem}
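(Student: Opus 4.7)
The plan is to proceed by exhaustive case analysis on the GCD graph $\Gamma_{\f}$. First, if $\Gamma_{\f}$ is disconnected with components of sizes $n_1+n_2 = 5$ (both at most $4$), Lemma~\ref{monomial tor independent} reduces to the product $\V_R(R) \cong \V_{R_1}(R_1) \times \V_{R_2}(R_2)$, and Theorem~\ref{th_nleq4} identifies each factor as a coordinate subspace, so their product is again a coordinate subspace with the correct codimension from Corollary~\ref{monomial_equivalence}. Second, if $\Gamma_{\f}$ is connected and has a universal vertex, Key Lemma~\ref{key lemma}(\ref{keylemitem3}) gives $\V_R(R) = \A_k^5$. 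Thus we may assume $\Gamma_{\f}$ is connected with maximum degree at most $3$.

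Next I would enumerate (up to relabeling) the connected graphs on $5$ vertices with maximum degree at most $3$: the path $P_5$ (graph~1 in the statement), the path-with-chord (graph~2), the cycle $C_5$, the Y-tree, the bull graph, the house graph, the four-cycle with a pendant, the triangle with a pendant path, the book $B_2$, and so on. For each such graph except graphs 1 and 2, I would exhibit an edge $\{i,j\}$ of $\Gamma_{\f}$ such that every other vertex is adjacent to exactly one of $i,j$ in $\Gamma_{\f}$, so that Lemma~\ref{lemma isolated in degree 2} applies and yields $\V_R(R) = \A_k^5$. A direct check of small cases shows that graphs 1 and 2 are exactly the graphs where no such pair exists. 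Corollary~\ref{monomial_equivalence} then translates the coordinate subspace containing $\V_R(R)$ into the embedded deformation codimension $c = |S|$.

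The remaining analysis concerns graphs 1 and 2. Here I would case-split on whether $f_3 \mid f_{24}$. In the generic case $f_3 \nmid f_{24}$, I would verify that the vertex $\{2,4\}$ becomes isolated in the Taylor graph: the only candidate edges into or out of $\{2,4\}$ involve either unit GCDs $\gcd(f_i,f_2)$ or $\gcd(f_i,f_4)$ (excluded by the structure of graphs 1 and 2) or the divisibility $f_3\mid f_{24}$ (excluded by hypothesis), or involve $f_1 \mid f_{\{2,4\}}$ or $f_5 \mid f_{\{2,4\}}$ (ruled out by Remark~\ref{remark one neighbor no diffs} since $1$ and $5$ each have at most one neighbor in $\Gamma_{\f}$ among $\{2,4\}$). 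Lemma~\ref{isolated vertex implies full support} then gives $\V_R(R) = \A_k^5$. In the exceptional case $f_3 \mid f_{24}$, I would compute $\V_R(R)$ directly from the complex $\widehat{\cC}_{E_a}(T)$ for $a=(a_1,\ldots,a_5)\in \A_k^5$. The key observation is that the Taylor graph now contains an extra differential edge $\{2,3,4\}\to\{2,4\}$, so that $b_{\{2,4\}}$ becomes a boundary (via $b_{\{2,3,4\}}$) whenever $a_1 a_5 \neq 0$, and conversely one may exhibit an explicit cycle representing nonzero homology whenever $a_1 = 0$ or $a_5 = 0$. This is carried out by tracking differentials in low degrees and identifying $\chi_1\chi_5$ as the annihilator of $\Ext_E(R,k)$ modulo higher-order terms.

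The main obstacle will be this last computation: one must carefully enumerate which basis elements $b_J$ in the Taylor complex are cycles (respectively boundaries) of the twisted differential $d_a = \partial + \sum a_i e_i$ as $a$ varies, with particular attention to the distinction created by the hypothesis $f_3 \mid f_{24}$. The cleanest approach is to exhibit, for the case $a_1 = 0 \neq a_5$, an explicit nontrivial homology class built from the pair $(b_{\{1\}}, b_{\{1,2\}})$ or a similar combination at the $1$-end, and symmetrically at the $5$-end; and then for $a_1 a_5 \neq 0$ to write down explicit bounding chains using the extra edge from $\{2,3,4\}$ to show all candidate cycles become boundaries. Once this is done, combining with Corollary~\ref{monomial_equivalence} in the non-exceptional subcases completes the classification.
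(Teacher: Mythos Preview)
Your overall strategy mirrors the paper's: reduce to connected GCD graphs with no degree-$4$ vertex, enumerate them, and handle graphs~1 and~2 separately. However, there are two genuine gaps.

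First, the claim that Lemma~\ref{lemma isolated in degree 2} disposes of \emph{every} connected max-degree-$3$ graph except graphs~1 and~2 is false. For the $5$-cycle $C_5$ (graph~3 in the paper), any edge $\{i,j\}$ has an ``opposite'' vertex adjacent to neither $i$ nor $j$, so the hypothesis of Lemma~\ref{lemma isolated in degree 2} fails for every choice. The same happens for graph~4 (the $4$-clique-minus-an-edge with a pendant): the pendant vertex $1$ is adjacent only to $2$, so for any edge not incident to $2$ it is adjacent to neither endpoint, while every edge incident to $2$ has the other endpoint in $\{3,4\}$, and then the remaining vertex among $\{3,4\}$ is adjacent to both. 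The paper treats these two graphs by ad hoc cycle/boundary arguments in $\widehat{\cC}_{E_a}(T)$; in particular, the $5$-cycle case is the most delicate, requiring a dimension count over the span of $b_J$ with $|J|\in\{2,4\}$.

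Second, your argument that $\{2,4\}$ is isolated in the Taylor graph when $f_3\nmid f_{24}$ fails for graph~1. In the path $1$--$2$--$3$--$4$--$5$, vertices $2$ and $4$ are \emph{not} adjacent in $\Gamma_{\f}$, so $\gcd(f_2,f_4)$ is a unit and the Taylor graph contains homotopy edges $\{2\}\to\{2,4\}$ and $\{4\}\to\{2,4\}$; hence $\{2,4\}$ is not isolated. The paper instead shows that $b_{\{2,3,4\}}$ is a nonboundary cycle when $f_3\nmid f_{24}$ (for both graphs), and in the exceptional case $f_3\mid f_{24}$ writes out the full $16\times 16$ matrices for $d_{\mathrm{even}}$ and $d_{\mathrm{odd}}$ and does a rank count to establish exactness when $a_1a_5\neq 0$.
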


\begin{proof}
As we argued in the proof of \Cref{th_nleq4}, the problem reduces to studying ideals whose GCD graph is connected and has no vertex of degree $4$; cf.\@ \Cref{key lemma} (\ref{keylemitem3}). By inspecting a list of all nonisomorphic small graphs such as \cite{ISGCI}, one can check that up to isomorphism, there are 10 connected simple graphs on 5 vertices with no vertex of degree $4$:

\noindent
\begin{center}
\begin{minipage}{0.2\textwidth}

\centering

\vfill

\begin{tikzpicture}[scale=.8, every node/.style={circle, draw}]
  \node (v1) at (0.5,-1) {1};
  \node (v2) at (1.125,0.25) {2};
  \node (v3) at (2,2) {3};
  \node (v4) at (2.875,0.25) {4};
  \node (v5) at (3.5,-1) {5};
  \draw (v1) -- (v2);
  \draw (v2) -- (v3);
  \draw (v4) -- (v5);
  \draw (v3) -- (v4);
\end{tikzpicture}
Graph 1

\vfill
\end{minipage}
\begin{minipage}{0.2\textwidth}
\centering

\begin{tikzpicture}[scale=.8, every node/.style={circle, draw}]
  \node (v1) at (0.5,-1) {1};
  \node (v2) at (1.125,0.25) {2};
  \node (v3) at (2,2) {3};
  \node (v4) at (2.875,0.25) {4};
  \node (v5) at (3.5,-1) {5};

  \draw (v1) -- (v2);
  \draw (v2) -- (v3);
  \draw (v2) -- (v4);
  \draw (v4) -- (v5);
  \draw (v3) -- (v4);
\end{tikzpicture}

Graph 2
\end{minipage}
\begin{minipage}{0.18\textwidth}
\centering

\begin{tikzpicture}[scale=.8, every node/.style={circle, draw}]
  \node (v1) at (0,3) {1};
  \node (v2) at (-1,1.7) {2};
  \node (v3) at (-1,0) {3};
  \node (v4) at (1,0) {4};
  \node (v5) at (1,1.7) {5};

  \draw (v1) -- (v2);
  \draw (v2) -- (v3);
  \draw (v3) -- (v4);
  \draw (v4) -- (v5);
  \draw (v5) -- (v1);
\end{tikzpicture}

Graph 3
\end{minipage}
\begin{minipage}{0.18\textwidth}
\centering

\begin{tikzpicture}[scale=.8, every node/.style={circle, draw}]
  \node (v1) at (0,1) {4};
  \node (v2) at (1,0) {5};
  \node (v3) at (2,1) {3};
  \node (v4) at (1,2) {2};
  \node (v5) at (1,3.5) {1};

  \draw (v1) -- (v2);
  \draw (v2) -- (v3);
  \draw (v3) -- (v4);
  \draw (v4) -- (v1);
  \draw (v4) -- (v5);
  \draw (v1) -- (v3);
\end{tikzpicture}

Graph 4
\end{minipage}
\hspace{-5mm}
\begin{minipage}{0.18\textwidth}
\centering

\begin{tikzpicture}[scale=.8, every node/.style={circle, draw}]
  \node (v1) at (1,3) {1};
  \node (v3) at (2,2) {3};
  \node (v2) at (3,3) {2};
  \node (v4) at (2,1) {4};
  \node (v5) at (2,0) {5};

  \draw (v1) -- (v3);
  \draw (v2) -- (v3);
  \draw (v2) -- (v3);
  \draw (v4) -- (v5);
  \draw (v3) -- (v4);
\end{tikzpicture}

Graph 5
\end{minipage}
\begin{minipage}{0.18\textwidth}
\centering

\begin{tikzpicture}[scale=.8, every node/.style={circle, draw}]
  \node (v1) at (0,1) {1};
  \node (v2) at (1,0) {2};
  \node (v3) at (2,1) {3};
  \node (v4) at (1,2) {4};
  \node (v5) at (1,3.5) {5};

  \draw (v1) -- (v2);
  \draw (v2) -- (v3);
  \draw (v3) -- (v4);
  \draw (v4) -- (v1);
  \draw (v4) -- (v5);
\end{tikzpicture}

Graph 6
\end{minipage}
\hspace{-5mm}
\begin{minipage}{0.18\textwidth}
\centering

\begin{tikzpicture}[scale=.8, every node/.style={circle, draw}]
  \node (v1) at (1,3) {1};
  \node (v3) at (2,2) {3};
  \node (v2) at (3,3) {2};
  \node (v4) at (2,1) {4};
  \node (v5) at (2,0) {5};

  \draw (v1) -- (v2);
  \draw (v1) -- (v3);
  \draw (v2) -- (v3);
  \draw (v2) -- (v3);
  \draw (v4) -- (v5);
  \draw (v3) -- (v4);
\end{tikzpicture}

Graph 7
\end{minipage}
\begin{minipage}{0.18\textwidth}
\centering

\begin{tikzpicture}[scale=.8, every node/.style={circle, draw}]
  \node (v1) at (0,3) {1};
  \node (v2) at (-1,1.7) {2};
  \node (v3) at (-1,0) {3};
  \node (v4) at (1,0) {4};
  \node (v5) at (1,1.7) {5};

  \draw (v1) -- (v2);
  \draw (v2) -- (v3);
  \draw (v3) -- (v4);
  \draw (v4) -- (v5);
  \draw (v5) -- (v1);
  \draw (v5) -- (v2);
\end{tikzpicture}

Graph 8
\end{minipage}
\begin{minipage}{0.2\textwidth}
\centering

\begin{tikzpicture}[scale=.8, every node/.style={circle, draw}]
  \node (v1) at (0.5,-1) {1};
  \node (v2) at (1.125,0.25) {2};
  \node (v3) at (2,2) {3};
  \node (v4) at (2.875,0.25) {4};
  \node (v5) at (3.5,-1) {5};

  \draw (v1) -- (v2);
  \draw (v2) -- (v3);
  \draw (v4) -- (v5);
  \draw (v3) -- (v4);
  \draw (v2) -- (v5);
  \draw (v1) -- (v4);
\end{tikzpicture}

Graph 9
\end{minipage}
\begin{minipage}{0.18\textwidth}
\centering

\begin{tikzpicture}[scale=.8, every node/.style={circle, draw}]
  \node (v1) at (0,3) {1};
  \node (v2) at (-1,1.7) {2};
  \node (v3) at (-1,0) {3};
  \node (v4) at (1,0) {4};
  \node (v5) at (1,1.7) {5};

  \draw (v1) -- (v2);
  \draw (v2) -- (v3);
  \draw (v3) -- (v4);
  \draw (v4) -- (v5);
  \draw (v5) -- (v1);
  \draw (v2) -- (v4);
  \draw (v3) -- (v5);
\end{tikzpicture}

Graph 10
\end{minipage}
\end{center}

\vspace{0.5em}
\noindent\underline{\textbf{Graphs 5 to 10}}: For each of these graphs, there is an edge $\{ i, j \}$ that satisfies the hypothesis of \Cref{lemma isolated in degree 2}, and hence $\V_R(R)$ is full in these cases. 

\vspace{0.5em}
\noindent\underline{\textbf{Graph 4}}: We claim that if $\Gamma_{\f}$ is graph 4, then the support of $R$ is full.
To do this, we will explicitly find a cycle that is not a boundary in $C = \widehat{\cC}_{E_{a}}(T)$ for all $a \in \A^5_k$. 

Consider the vertices $\{2, 3\}$ and $\{ 2, 4 \}$ of the Taylor graph of $\f$, and let us find all their neighbors. 
Let $J = \{2, 3\}$ or $J = \{ 2, 4 \}$. Since $|J| = 2$, by \Cref{no differentials 2 to 1} the Taylor graph of $\f$ has no differential edges of the form $J \longrightarrow J \setminus \{ i \}$. Moreover, the structure of $\Gamma_{\f}$ forces $\gcd(f_i, f_{J \smallsetminus\{i\}})$ to be a nonunit for all $i \in [5]$, so the Taylor graph of $\f$ has no homotopy edges of the form $J \longrightarrow J \cup \{ i \}$ or $J \setminus \{ i \} \longrightarrow J$. We conclude that the only possible edges in or out of $J$ are of the form $J \cup \{ i \} \longrightarrow J$, which correspond to $i \notin J$ such that $f_i \mid f_J$. 

By \Cref{remark one neighbor no diffs}, is not possible for $f_1$ or $f_5$ to divide $f_J$. Thus the only possible edges of the Taylor graph involving $\{2, 3\}$ and $\{ 2, 4 \}$ are $\{2, 3, 4\} \longrightarrow \{2, 3\}$ and $\{2, 3, 4\} \longrightarrow \{2, 4\}$; we have these edges, respectively, when $f_4 \mid f_{23}$ and $f_3 \mid f_{24}$. Thus $b_{\{ 2, 3 \}}$ and $b_{\{ 2, 4 \}}$ are cycles in $C$, and the subspace of boundaries contained in the span of $b_{\{ 2, 3 \}}$ and $b_{\{ 2, 4 \}}$ is at most one dimensional. Therefore, $C$ is not exact, and $R$ has full support.

\vspace{0.5em}
\noindent \underline{\textbf{Graph 3}}:
We claim that if $\Gamma_{\f}$ is graph 3, the $5$-cycle, then the support of $R$ is full.

\vspace{0.3em}

For all $i \in [5]$ there is a unique $\{j,\ell\}$ such that $\gcd(f_i, f_{\{j,\ell\}})$ is a unit; in $\Gamma_{\f}$, this corresponds to the unique edge not adjacent to $i$ or its neighbors. Moreover, note that $\{j,\ell\}$ is an edge of $\Gamma_{\f}$, so $\gcd(f_j, f_\ell) \in \m$. A quick analysis of $\Gamma_{\f}$ using \Cref{no differentials 2 to 1,remark one neighbor no diffs} leads us to see that in the Taylor graph of $\f$, the only directed edge in or out of $\{ j, \ell \}$ is $\{ j, \ell \} \longrightarrow \{ i, j, \ell \}$. In particular, this shows that for all $a \in \A^5_k$, given any boundary $w$ in $C = \widehat{\cC}_{E_{a}}(T)$, its unique expression as a linear combination of the $b_J$
\[ w = \sum_{J \subseteq [n]} c_J \cdot b_J\]
has $c_J = 0$ for all $J$ corresponding to an edge of $\Gamma_{\f}$.

Now consider $J \subseteq [5]$ with $|J| = 4$. Since every $i \in [5]$ has two exactly neighbors in $\Gamma_{\f}$, the following hold:
\begin{itemize}
    \item The remaining element $j$ with $[5] \smallsetminus J = \{ j \}$ satisfies $\gcd(f_j,f_J) \in \m$, so there are no edges $J \longrightarrow [5]$.
    \item Any $i \in J$ satisfies $\gcd(f_i, f_{J \smallsetminus \{ i \} }) \in \m$, so the Taylor graph has no edges $J \smallsetminus \{ i \} \longrightarrow J$.
\end{itemize}
Therefore, the only possible edge into any $J$ with $|J| = 4$ is $[5] \longrightarrow J$.

Combining the facts above, it follows that in $C$ there is at most a one dimensional subspace of boundaries contained in
\[
\operatorname{span}_k \{ b_J \mid |J|=4\text{ or } |J|=2 \text{ and }J\text{ is an edge of }\Gamma_{\f}\}.
\]
Using \Cref{remark taylor graph differential},  for each edge $\{ j, \ell \}$ of $\Gamma_{\f}$, if $i$ is the unique vertex not adjacent to $\{ j, \ell\}$ in $\Gamma_{\f}$, then
\[
d(b_{\{ j, \ell \}}) = \sgn(i, \{j, \ell \}) \cdot a_i b_{\{ i, j, \ell \}}.
\]
If $a_i = 0$ for some $i$, then the element $b_{\{ j, \ell \}}$ is a cycle that is not a boundary, and therefore $a \in \V_R(R)$, so we may assume that all $a_i\neq 0$.

We can also compute
\vspace{-0.8em}
\[
d(b_{\{ 1, 2, 4, 5\}}) = \begin{cases}
    b_{\{ 1, 2, 4\}} - b_{\{ 2, 4, 5 \}} & \text{ if } f_1 \mid f_{25} \text{ and } f_5 \mid f_{14} \\
     b_{\{ 1, 2, 4\}} & \text{ if } f_1 \nmid f_{25} \text{ and } f_5 \mid f_{14} \\
    - b_{\{ 2, 4, 5 \}} & \text{ if } f_1 \mid f_{25} \text{ and } f_5 \nmid f_{14} \\
      0 & \text{ if } f_1 \nmid f_{25} \text{ and } f_5 \nmid f_{14},
\end{cases}
\]
and likewise for the other subsets $J\subseteq [5]$ of size $4$. Depending on the cases just listed, one of the following is a cycle:
\[
a_2a_4 b_{\{ 1, 2, 4, 5\}} - a_2 b_{\{ 1, 2 \}} + a_4 b_{\{ 4, 5 \}}, \quad a_4b_{\{ 1, 2, 4, 5\}} - b_{\{ 1, 2 \}}, \quad a_2 b_{\{ 1, 2, 4, 5\}} +  b_{\{ 4, 5 \}}, \quad \text{ or } \quad b_{\{ 1, 2, 4, 5\}}.
\]
Again, there is a similar cycle for the each subset $J\subseteq [5]$ of size $4$. Since all $a_i\neq 0$, these cycles span a five dimensional subspace of
\[
\operatorname{span}_k \{ b_J \mid |J|=4\text{ or } |J|=2 \text{ and }J\text{ is an edge of }\Gamma_{\f}\},
\]
showing that $C$ is not exact for any $a\in \mathbb{A}_k^5$. This finishes the proof that $\V_R(R)$ is full when $\Gamma_{\f}$ is graph 3.

\vspace{0.5em}

\noindent\underline{\textbf{Graphs 1 and 2}}: In these two cases, our argument will involve directly analyzing the differential in the two periodic complexes $C = \widehat{\cC}_{E_{a}}(T)$ for $a \in \A^5_k$. We will write
\[
C = \xymatrix@C=9mm{\cdots \ar[r] & C_{\text{even}} \ar[r]^-{d_{\text{even}}} & C_{\text{odd}} \ar[r]^-{d_{\text{odd}}} &  C_{\text{even}}\ar[r] & \cdots}
\]
where
\[
C_{\text{even}} \colonequals \bigoplus_{|J| \text{ is even}} k \cdot b_J \quad \text{and} \quad C_{\text{odd}} \colonequals \bigoplus_{|J| \text{ is odd}} k \cdot b_J.
\]

\vspace{0.5em}

On the next page, we present the two matrices $d_{\text{even}}$ and $d_{\text{odd}}$ under the canonical choice of bases $b_J$ with $J \subseteq [n]$. 
The matrices for the two GCD graphs are very similar, and so they are simultaneously depicted with the following conventions: 
\begin{itemize}
    \item In column $J$ and row $J \cup \{ i \}$, a nonzero entry is necessarily of the form $\pm a_i$, and its nonvanishing depends only on the choice of GCD graph; any such entry we present is in fact nonzero for graph 1, but circled if \circled{\!it\!} vanishes in the case of graph 2.
    \item In column $J \cup \{ i \}$ and row $J$, any nonzero entry must be of the form $\pm 1$, and its nonvanishing depends on whether $f_i \mid f_J$. We have boxed the \squared{entries} that are nonzero exactly when $f_3 \mid f_{24}$, and circled the remaining such entries, which may vanish or not for both graphs.
\end{itemize}

\newpage

\[ d_{\textrm{even}} = \begin{tblr}{c|[dashed]cccccccccccccccc} 
& {\textrm{\tiny $\varnothing$}} & {\textrm{\tiny 12}} & {\textrm{\tiny 13}} & {\textrm{\tiny 14}} & {\textrm{\tiny 15}} & {\textrm{\tiny 23}} & {\textrm{\tiny 24}} & {\textrm{\tiny 25}} & {\textrm{\tiny 34}} & {\textrm{\tiny 35}} & {\textrm{\tiny 45}} & 
\text{\tiny $\begin{array}{c} 23 \\ 45 \end{array}$} & 
\textrm{\tiny $\begin{array}{c} 13 \\ 45 \end{array}$} 
& \textrm{\tiny $\begin{array}{c} 12 \\ 45 \end{array}$}
& \textrm{\tiny $\begin{array}{c} 12 \\ 35 \end{array}$} 
& \textrm{\tiny $\begin{array}{c} 12 \\ 34 \end{array}$} \\
\hline[dashed]
\textrm{\tiny 1} & a_1 & 0 & 0 & 0 & 0 & 0 & 0 & 0 & 0 & 0 & 0 & 0 & 0 & 0 & 0 & 0 & \\ 
\textrm{\tiny 2} & a_2 & 0 & 0 & 0 & 0 & 0 & 0 & 0 & 0 & 0 & 0 & 0 & 0 & 0 & 0 & 0 & \\ 
\textrm{\tiny 3} & a_3 & 0 & 0 & 0 & 0 & 0 & 0 & 0 & 0 & 0 & 0 & 0 & 0 & 0 & 0 & 0 & \\ 
\textrm{\tiny 4} & a_4 & 0 & 0 & 0 & 0 & 0 & 0 & 0 & 0 & 0 & 0 & 0 & 0 & 0 & 0 & 0 & \\ 
\textrm{\tiny 5} & a_5 & 0 & 0 & 0 & 0 & 0 & 0 & 0 & 0 & 0 & 0 & 0 & 0 & 0 & 0 & 0 & \\
\textrm{\tiny 123} & 0 & 0 & 0 & 0 & 0 & 0 & 0 & 0 & 0 & 0 & 0 & 0 & 0 & 0 & 0 & 0 \\
\textrm{\tiny 124} & 0 & \circled{$a_4$} & 0 & 0 & 0 & 0 & 0 & 0 & 0 & 0 & 0 & 0 & 0 & 0 & 0 & \squared{1} \\
\textrm{\tiny 125} & 0 & a_5 & 0 & 0 & 0 & 0 & 0 & 0 & 0 & 0 & 0 & 0 & 0 & 0 & 0 & 0\\
\textrm{\tiny 134} & 0 & 0 & 0 & 0 & 0 & 0 & 0 & 0 & a_1 & 0 & 0 & 0 & 0 & 0 & 0 & \circled{-1} \\
\textrm{\tiny 135} & 0 & 0 & a_5 & 0 & -a_3 & 0 & 0 & 0 & 0 & a_1 & 0 & 0 & \circled{1} & 0 & \circled{-1} & 0 \\
\textrm{\tiny 145} & 0 & 0 & 0 & 0 & 0 & 0 & 0 & 0 & 0 & 0 & a_1 & 0 & 0 & 0 & 0 & 0 \\ 
\textrm{\tiny 234} & 0 & 0 & 0 & 0 & 0 & 0 & 0 & 0 & 0 & 0 & 0 & 0 & 0 & 0 & 0 & 0 \\
\textrm{\tiny 235} & 0 & 0 & 0 & 0 & 0 & a_5 & 0 & 0 & 0 & 0 & 0 & \circled{1} & 0 & 0 & 0 & 0 \\
\textrm{\tiny 245} & 0 & 0 & 0 & 0 & 0 & 0 & 0 & 0 & 0 & 0 & \circled{$a_2$} & \squared{-1} &  0 & 0 & 0 & 0 \\
\textrm{\tiny 345} & 0 & 0 & 0 & 0 & 0 & 0 & 0 & 0 & 0 & 0 & 0 & 0 & 0 & 0 & 0 & 0 \\
\textrm{\tiny 12345} & 0 & 0 & 0 & 0 & 0 & 0 & 0 & 0 & 0 & 0 & 0 & 0 & 0 & 0 & 0 & 0 
\end{tblr}\]
and
\[ d_{\textrm{odd}} = \begin{tblr}{c|[dashed]cccccccccccccccc} 
& {\textrm{\tiny 1}} & {\textrm{\tiny 2}} & {\textrm{\tiny 3}} & {\textrm{\tiny 4}} & {\textrm{\tiny 5}} & {\textrm{\tiny 123}} & {\textrm{\tiny 124}} & {\textrm{\tiny 125}} & {\textrm{\tiny 134}} & {\textrm{\tiny 135}} & {\textrm{\tiny 145}} & {\textrm{\tiny 234}} & 
\text{\tiny 235} & 
\textrm{\tiny 245} 
& \textrm{\tiny 345}
& \textrm{\tiny 12345} \\
\hline[dashed]
\textrm{\tiny $\varnothing$} & 0 & 0 & 0 & 0 & 0 & 0 & 0 & 0 & 0 & 0 & 0 & 0 & 0 & 0 & 0 & 0 \\
\textrm{\tiny 12} & 0 & 0 & 0 & 0 & 0 & 0 & 0 & 0 & 0 & 0 & 0 & 0 & 0 & 0 & 0 & 0 & \\
\textrm{\tiny 13} & -a_3  & 0 & a_1 & 0 & 0 & \circled{-1} & 0 & 0 & 0 & 0 & 0 & 0 & 0 & 0 & 0 & 0 & \\
\textrm{\tiny 14} & -a_4 & 0 & 0 & a_1 & 0 & 0 & 0 & 0 & 0 & 0 & 0 & 0 & 0 & 0 & 0 & 0 & \\
\textrm{\tiny 15} & -a_5 & 0 & 0 & 0 & a_1 & 0 & 0 & 0 & 0 & 0 & 0 & 0 & 0 & 0 & 0 & 0 & \\ 
\textrm{\tiny 23} & 0 & 0 & 0 & 0 & 0 & 0 & 0 & 0 & 0 & 0 & 0 & 0 & 0 & 0 & 0 & 0 & \\
\textrm{\tiny 24} & 0 & \circled{-$a_4$} & 0 & \circled{$a_2$} & 0 & 0 & 0 & 0 & 0 & 0 & 0 & \squared{-1} & 0 & 0 & 0 & 0 & \\
\textrm{\tiny 25} & 0 & -a_5 & 0 & 0 & a_2 & 0 & 0 & 0 & 0 & 0 & 0 & 0 & 0 & 0 & 0 & 0 & \\
\textrm{\tiny 34} & 0 & 0 & 0 & 0 & 0 & 0 & 0 & 0 & 0 & 0 & 0 & 0 & 0 & 0 & 0 & 0 & \\
\textrm{\tiny 35} & 0 & 0 & -a_5 & 0 & a_3 & 0 & 0 & 0 & 0 & 0 & 0 & 0 & 0 & 0 & \circled{-1} & 0 & \\
\textrm{\tiny 45} & 0 & 0 & 0 & 0 & 0 & 0 & 0 & 0 & 0 & 0 & 0 & 0 & 0 & 0 & 0 & 0 & \\ 
\textrm{\tiny 2345} & 0 & 0 & 0 & 0 & 0 & 0 & 0 & 0 & 0 & 0 & 0 & 0 & 0 & 0 & 0 & 0 \\
\textrm{\tiny 1345} & 0 & 0 & 0 & 0 & 0 & 0 & 0 & 0 & 0 & 0 & 0 & 0 & 0 & 0 & a_1 & \circled{-1} \\
\textrm{\tiny 1245} & 0 & 0 & 0 & 0 & 0 & 0 & 0 & 0 & 0 & 0 & 0 & 0 & 0 & 0 & 0 & \squared{1} \\
\textrm{\tiny 1235} & 0 & 0 & 0 & 0 & 0 & -a_5 & 0 & 0 & 0 & 0 & 0 & 0 & 0 & 0 & 0 & \circled{-1} \\
\textrm{\tiny 1234} & 0 & 0 & 0 & 0 & 0 & 0 & 0 & 0 & 0 & 0 & 0 & 0 & 0 & 0 & 0 & 0
\end{tblr}
\]

Suppose that $a_1 = 0$ or $a_5=0$. Reading from the matrices on the previous page, we see that $b_{\{ 3, 4  \}}$ or $b_{\{2,3\}}$ is a cycle, respectively, as the corresponding column in $d_{\text{even}}$ is zero; furthermore, $b_{\{ 3, 4 \}}$ and $b_{\{2,3\}}$ are never boundaries, as the corresponding rows in $d_{\text{odd}}$ consist only of zeroes. Therefore, $\mathcal{V}(\chi_1 \chi_5) \subseteq\V_R(R)$.

Now consider the case when $f_3 \nmid f_{24}$. This condition forces the column in $d_{\text{odd}}$ corresponding to $234$ to vanish, meaning that $b_{\{ 2, 3, 4 \}}$ is a cycle. But the row corresponding to $234$ in $d_{\text{even}}$ only consists of zeroes, so $b_{\{ 2, 3, 4 \}}$ is not a boundary. This is independent of the choice of $a$, and thus $R$ has full support.
    
Finally, assume that $f_3 \mid f_{24}$. We aim to show that $\V_R(R) = \mathcal{V}(\chi_1\chi_5)$. To this end, assume that $a \in \mathbb{A}_k^5$ with  $a_1\neq 0$ and  $a_5 \neq 0$. By inspection, we see that the rank of $d_{\text{even}}$ is at least 8: columns $\varnothing$, 12, 13, 23, 34, 45, 2345, and 1234 are necessarily linearly independent --- in fact, all but the last one have at least one nonzero entry in a row none of the others do. Similarly, the rank of $d_{\text{odd}}$ is at least $8$: columns 1, 2, 3, 4, 123, 234, 345, and 12345 are necessarily linearly independent. This means that $\ker (d_{\text{even}})$ has rank at most $16-8 = 8$, but since the image of $d_{\text{odd}}$ is contained in $\ker (d_{\text{even}})$ and has rank at least $8$, we conclude that $\ker (d_{\text{even}}) = \im (d_{\text{odd}})$, and both have rank $8$. In particular, $C$ must be exact, and thus $a \notin \V_R(R)$. This finishes the proof of the claim, and the classification of support when $\Gamma_{\f}$ is graph 1 or 2.
\end{proof}

\begin{example}
\label{six-cycle}
   Moving beyond 5 monomial generators, classifying the possibilities for $\V_R(R)$ seems especially difficult considering the number of GCD/Taylor graphs one needs to analyze that have not been previously identified to yield full support. Furthermore, the realizable supports are more complicated. For instance, if $R=k[x_1,\ldots,x_6]/(x_1x_2,x_2x_3,x_3x_4,x_4x_5,x_5x_6,x_6x_1)$, whose corresponding GCD graph is the $6$-cycle, then a Macaulay2 computation yields
   \[
   \V_R(R)=\mathcal{V}(\chi_1\chi_3\chi_5+\chi_2\chi_4\chi_6)\subseteq \spec k[\chi_1,\ldots,\chi_6]\,.
   \]
\end{example}

\begin{remark}
If $R$ is a ring defined by $n$ monomials in the variables of a standard graded polynomial ring $Q$, the polarization of $R$ is a ring $R^\circ$ defined by $n$ squarefree monomials in a (larger) polynomial ring $Q^\circ$, and there are linear forms $\ell_1,\ldots,\ell_m \in Q^\circ$ that form a regular sequence on $R^\circ$, and such that $R^\circ/(\ell_1,\ldots,\ell_m)=R$; see \cite{Froberg:1982}. 
It follows from this construction and \Cref{linear_nzd} that $\V_R(R)=\V_{R^\circ}(R^\circ)$ as subvarieties of $\mathbb{A}^n_k$. For this reason, in classifying what varieties can occur as cohomological support varieties of monomial rings, it suffices to consider squarefree monomials.
\end{remark}

\section*{Acknowledgments}

We thank Mark Walker for helpful discussions regarding the embedded deformation problem and for comments on a previous version of the paper. We also thank Ryan Watson, who found typos on a previous version of this paper.

Finding the proof of \Cref{classification of n=5} involved computing many Taylor graphs of $5$-generated monomial ideals, with $2^5 = 32$ vertices, and analyzing them with the help of the Macaulay2 package \texttt{Visualize} \cite{VisualizeSource}.\looseness -1

Briggs was partly funded by the European Union under the Grant Agreement no.\ 101064551, Hochschild. Grifo was supported by NSF grant DMS-2236983. Pollitz was supported by NSF grant DMS-2302567.

This work started while all three authors were visiting SLMath (formerly known as MSRI), supported by the National Science
Foundation under Grant No.~DMS-1928930 and by the Alfred P.~Sloan Foundation under grant G-2021-16778.

\bibliographystyle{alpha}
\bibliography{references}

\end{document}